\RequirePackage{fix-cm}
\documentclass[smallextended]{svjour3}       
\smartqed  
\usepackage{graphicx}
\usepackage{mathptmx}      
\usepackage[numbers]{natbib}
\usepackage{amsmath, amssymb, amsfonts,medmath,mathrsfs}
\usepackage[percent]{overpic}
\usepackage{appendix}
\usepackage{placeins}
\usepackage{multirow}
\usepackage{hyperref}

\usepackage[dvipsnames,usenames]{xcolor}

\newcommand{\RR}{\mathbb{R}}

\newcommand{\ZZ}{\mathbb{Z}}

\newcommand{\bD}{\mathbf{D}}

\newcommand{\bH}{\mathbf{H}}
\newcommand{\bI}{\mathbf{I}}
\newcommand{\bJ}{\mathbf{J}}

\newcommand{\bd}{\mathbf{d}}

\newcommand{\bm}{\mathbf{m}}

\newcommand{\bp}{\mathbf{p}}

\newcommand{\br}{\mathbf{r}}
\newcommand{\bs}{\mathbf{s}}

\newcommand{\bv}{\mathbf{v}}
\newcommand{\bw}{\mathbf{w}}
\newcommand{\bx}{\mathbf{x}}

\newcommand{\bz}{\mathbf{z}}

\newcommand{\bseps}{\boldsymbol{\epsilon}}

\newcommand{\bsrho}{\boldsymbol{\rho}}

\newcommand{\bszero}{\mathbf{0}}
\newcommand{\bsone}{\mathbf{1}}

\newcommand{\cC}{\mathcal{C}}

\newcommand{\cN}{\mathcal{N}}

\DeclareMathOperator*{\argmin}{argmin}

\newcommand{\re}{{\rm e}}
\newcommand{\rI}{{\rm I}}
\newcommand{\rF}{{\rm F}}

\journalname{Journal of Scientific Computing}
\begin{document}

\title{A Gauss–Newton Method with No Additional PDE Solves Beyond Gradient Evaluation for Large-Scale PDE-Constrained Inverse Problems}


\author{Cash Cherry \and
       Samy Wu Fung \and
        Luis Tenorio \and 
        Ebru Bozda\u{g} 
}


 \institute{C. Cherry \at
               Department of Applied Mathematics \& Statistics, Colorado School of Mines, Golden, CO \\
               \email{ccherry@mines.edu},           
               \emph{now at the University of California Santa Barbara} 
            \and
            S. Wu Fung \at
               Department of Applied Mathematics \& Statistics, Colorado School of Mines, Golden, CO \\
            \and
            L. Tenorio \at
               Department of Applied Mathematics \& Statistics, Colorado School of Mines, Golden, CO \\ 
            \and
            E. Bozda{\u g} \at
               Departments of Applied Mathematics \& Statistics and Geophysics, Colorado School of Mines, Golden, CO \\
 }

\date{}

\maketitle
\begin{abstract}
Partial Differential Equation (PDE)-constrained optimization problems often take the form of an optimization of an objective function given as a sum of loss terms. 
Each function or gradient evaluation requires one or more PDE solves, which render these problems computationally demanding. While Gauss–Newton methods are well-suited for large-scale PDE-constrained optimization, their application to settings such as Full-Waveform Inversion (FWI) is hindered by the need for additional PDE solves to compute Jacobian–vector products. 
This paper proposes a Gauss–Newton approach that eliminates the need for extra PDE solves beyond those required for gradient computation. 
Our numerical experiments on FWI demonstrate that the proposed method achieves the efficiency of gradient-based schemes while retaining the fast convergence of Gauss–Newton methods.
\end{abstract}
\subclass{65K10  \and 86A15 \and 86A22}

\section{Introduction}
\label{intro}

Large-scale Partial Differential Equation (PDE)-constrained optimization problems arise in a wide variety of scientific and engineering applications, including geophysical inversions~\cite{Tromp05, fung2019multiscale}, fluid dynamics~\cite{cotter2009bayesian, lesnic2021inverse}, medical imaging~\cite{mueller2012linear, ruthotto2012diffeomorphic}, phase retrieval~\cite{shechtman2015phase, parada2025fast, fung2020multigrid}, game theory~\cite{ding2022mean, chow2022numerical}, and machine learning~\cite{liu2025revisiting, ruthotto2020deep, wu2020admm, ye2022adaptive}. In such problems, one seeks to optimize an objective functional subject to physical laws expressed as PDEs, which serve as equality constraints coupling the state and control variables. These formulations are often computationally intensive because each evaluation of the objective function or its derivatives typically requires the solution of large-scale PDE systems.

A common feature of these problems is an
objective function $F : \RR^p \to \RR$  of the form
\begin{equation}\label{eq:emp-min}
    \min_{\bm} \; F(\bm) \quad \mbox{with}\quad F(\bm) =  \sum_{i=1}^N \phi_i(\bm) + R(\bm),
\end{equation}
where $\bm \in \RR^p$ is a vector of the model parameters we wish to recover, each  function $\phi_i\colon \mathbb{R}^p \to \mathbb{R}$ corresponds to a PDE-constrained subproblem associated with a particular experiment or data sample, and $R\colon \mathbb{R}^p \to \mathbb{R}$ is a regularization term. This structure naturally arises in inverse problems such as Full-Waveform Inversion (FWI), where recordings from each seismic measurement contribute an independent loss term \cite{Modrak, GladM15}. The same structure also arises in empirical risk minimization in machine learning~\cite{bottou2018optimization}.

In large-scale settings, the computational cost of repeatedly solving PDEs until convergence of the model parameters prohibits explicit computations of the Jacobians or Hessians, which motivates the use of \emph{matrix-free} optimization algorithms (i.e. no explicit computation of a Jacobian or Hessian). First-order methods, such as conjugate gradient or limited-memory quasi-Newton methods, are often preferred due to their low per-iteration cost and scalability  \cite{Nocedal}. For problems where higher accuracy is desired, {Gauss--Newton} (GN) methods, particularly in their conjugate gradient (CG) variants, are frequently employed~\cite{haber2014computational, fung2019large, fung2019uncertainty}. Matrix-free Gauss-Newton methods can achieve faster local convergence than first order methods without explicitly forming or storing Jacobians \cite{Nocedal}. Instead, they rely on matrix-free Jacobian--vector and vector--Jacobian products that can be computed at the cost of additional PDE solves to iteratively solve the linear Gauss-Newton system at each optimization iteration.

However, a key drawback of Gauss--Newton--CG methods is their computational cost per GN iteration. Each GN iteration typically involves at least one inner CG iteration to solve a linear system, and every CG iteration requires at least one Jacobian--vector and one vector--Jacobian product, each corresponding to one or more additional PDE solves. As a result, even though Gauss--Newton methods converge in fewer outer iterations than first-order schemes, their per-iteration cost may dominate the total runtime for large-scale PDE problems such as FWI.

We propose a Gauss-Newton formulation whose updates do not require additional PDE solves beyond those already required for gradient computations. Our proposed method provides a compromise between  first and second-order methods that retains the efficiency of gradient-based optimization while inheriting the fast local convergence properties of Gauss--Newton methods. We demonstrate the effectiveness of this approach on large-scale FWI problems, highlighting its computational advantages and scalability.

\section{Mathematical Background}
\label{section:math_background}
We consider observations modeled as
\[
    \bd_i = G_i(\bm) + \bseps_i, 
    \quad i = 1, \ldots, N,
\]
where $G_i \colon \RR^p \to \RR^n$ denotes a (nonlinear) forward operator that maps the model parameters 
$\bm$ to predicted data space, and $\bseps_i$ represents measurement error. In many applications (e.g., FWI, Direct Current Resistivity  and medical imaging) it is common  to have $N \ll p$.
The goal of the inverse problem is to recover the model parameters $\bm$ that best explain 
the observed data $\bd_i , i=1,\ldots, N$, by solving a minimization problem similar to \eqref{eq:emp-min}:
\begin{equation}
    \min_{\bm} \; F(\bm)\quad\mbox{with}\quad F(\bm)=
    \Phi(\bm)+ R(\bm),
    \label{eq:objective}
\end{equation}
where $\Phi$ is the sum of squared norms of residuals $\br_i(\bm) \equiv  G_i(\bm) - \bd_i$, so that    
\[
\Phi(\bm) \,=\,  \sum_{i=1}^N \phi_i(\bm) =  \frac{1}{2} \sum_{i=1}^N 
    \| \br_i(\bm) \|^2,
\]
and $R \colon \RR^p \to \RR$ is a function used for regularization (e.g., Tikhonov~\cite{mueller2012linear}, Total Variation~\cite{rudin1992nonlinear} or sparsity-promoting regularization
\cite{tibshirani1996regression}). For ease of presentation, we assume that $R$ is twice continuously-differentiable, but the proposed methodology can also be applied when $R$ is not differentiable, e.g., $R(\bm) = \| \bm \|_1$. Similarly, though we consider least squares misfits $\phi_i ( \bm ) = \frac{1}{2} \| \br_i ( \bm ) \|^2,$ our methodology may be applied to problems of the more general form in \eqref{eq:emp-min}.

To perform gradient-based optimization, one must first compute the gradient of the objective with respect to $\bm$, which takes the form
\begin{equation}
    \label{eq:gradient}
    \nabla F(\bm)
    = \sum_{i=1}^N \nabla \phi_i(\bm) + \nabla R(\bm) =  \sum_{i=1}^N \bJ_i(\bm)^\top 
  \br_i(\bm) + \nabla R(\bm),
\end{equation}
where $\bJ_i(\bm) \in \RR^{n\times p}$ denotes the Jacobian matrix of $G_i$ at $\bm$ (sometimes called a \emph{sensitivity} matrix).
Each gradient $\nabla\phi_i$ typically requires one forward and one adjoint PDE solve. The prototypical first-order algorithm, gradient-descent (GD), has an update given by 
\begin{equation}
    \bm_{k+1} = \bm_k + \alpha_k \,\bp^{\rm GD}_k, \quad \text{with} \quad \bp^{\rm GD}_k = -\nabla F(\bm_k),
    \label{eq:gd_update}
\end{equation}
where the stepsize $\alpha_k > 0$ is usually chosen to either be constant or to satisfy a linesearch condition~\cite{Nocedal}. The CG and LBFGS updates we  use in our tests have similar updates, with the only difference being that $\bp_k^{\rm CG}$ and $\bp_k^{\rm LBFGS}$ are computed as functions of current \emph{and previous} gradients and models to accelerate convergence.

\subsection{Gauss-Newton Method}

{Gauss--Newton} (GN) methods solve \eqref{eq:objective} through making iterative quadratic approximations of the objective function, each of which may be minimized by solving a linear system. At iteration $k$, the (standard) Gauss--Newton method constructs a local quadratic approximation centered at the current iterate $\bm_k$ using a Hessian approximation that neglects second-order terms involving the residuals~\cite{Nocedal}.
The GN approximation to the Hessian of $F$ at the $k$th step is given by
\begin{equation}\label{eq:H-GN}
    \bH^{\rm GN}_{k} = \sum_{i=1}^N \bJ_i(\bm_k)^\top \bJ_i(\bm_k) + \nabla^2 R(\bm_k),
\end{equation}
where 
$\nabla^2 R$ is the Hessian of the regularizer. The first term approximates the Hessian of the data misfit $\Phi(\bm)$ by neglecting second-order derivative terms in the residuals, while the second term provides the exact Hessian contribution from regularization.

The GN step $\bp_k^{\rm GN}$ based on the Hessian approximation \eqref{eq:H-GN}
is defined as the solution of  the quadratic minimization problem
\begin{equation} \label{eq:gn_update_quadratic_rep}
    \bp_k^{\rm GN} = \argmin_{\bp}\, \left(\nabla \Phi(\bm_k) + \nabla R(\bm_k)\right)^\top \bp + \frac{1}{2}\,\bp^\top \bH^{\rm GN}_{k} \bp,
    \end{equation}
which has explicit solution
\begin{equation}
    \bp^{\rm GN}_k =  -\left( \bH_{k}^{\rm GN} \right)^{-1} \nabla F(\bm_k).
    \label{eq:gn_update_explicit}
\end{equation}
The model parameters are then updated as
\(
    \bm_{k+1} = \bm_k + \alpha_k\, \bp_k^{\rm GN},
\)
where $\alpha_k > 0$ is again a step size parameter chosen using linesearch or trust region methods.

In practice, matrix-free variants such as Gauss--Newton--CG are commonly employed to avoid explicitly forming or inverting the Jacobian matrices~\cite{bui2013computational, haber2014computational, petra2014computational}. These methods solve the linear system~\eqref{eq:gn_update_explicit} iteratively using conjugate gradient iterations, requiring only matrix-vector products with $\bH_k^{\rm GN}$. Although these approaches can be effective in many PDE-constrained optimization settings, they become computationally burdensome in applications such as FWI. In such problems, each Jacobian--vector product $\bJ_i(\bm_k) \bv$ or vector--Jacobian product $\bJ_i(\bm_k)^\top \bw$ \emph{requires solving an additional sensitivity or adjoint PDE}. Since multiple such products are required within every conjugate-gradient iteration, the cumulative cost of these PDE solves may dominate the overall runtime. This computational bottleneck motivates the development of Gauss--Newton methods that retain their favorable convergence properties while substantially reducing, or ideally eliminating, the need for additional PDE solves beyond those required for gradient computation.

\section{An Efficient Gradient-Only Gauss-Newton Method (GOGN)}
\label{section:efficient_gn}

To address the computational limitations of traditional Gauss-Newton methods discussed in Section~\ref{section:math_background}, we propose an efficient Gradient--Only Gauss--Newton (GOGN) approach that, in comparison to a Gauss-Newton CG approach, eliminates the need for PDE solves beyond those required to compute the gradient, so that it may be applied with roughly the same computational expense as a first-order algorithm. The key insight is to reformulate the optimization problem in a way that allows us to construct the Gauss-Newton Hessian approximation using only the gradients already computed during each iteration.

\subsection{Problem Reformulation}

We begin by reformulating the objective function~\eqref{eq:objective}. Rather than treating $\Phi$ as a direct sum of norm-squared residuals and using the Jacobian matrices of the operators $G_i$, we focus on the norm of the residuals,
and use their Jacobian matrices. This is the general approach considered in \cite{Nocedal} for nonlinear least-squares with Gauss-Newton. Thus, we define
\[
    \rho_i(\bm) = \sqrt{2\phi_i(\bm)} =  \|\br_i(\bm)\|, 
    \quad i = 1, \ldots, N,
\]
so that $\phi_i(\bm) = \rho_i(\bm)^2/2$, and $\Phi$ is then rewritten as, 
\begin{equation}
    \Phi(\bm) = \sum_{i=1}^N \phi_i(\bm) = \frac{1}{2}\sum_{i=1}^N \rho_i(\bm)^2
    =\frac{1}{2} \,\| \bsrho (\bm)\|^2,
    \label{eq:reformulated_loss}
\end{equation}
where $\bsrho(\bm) = [\rho_1(\bm), \rho_2(\bm), \ldots, \rho_N(\bm)]^\top$ collects the residual norms across the $N$ terms that we are already differentiating separately. 
The key advantage of this reformulation is that we now view $\Phi$ as the objective functional of a nonlinear least-squares problem in terms of the vector-valued function $\bsrho$ that maps from $\RR^p$ to $\RR^N$. This perspective enables us to apply Gauss-Newton methodology to~\eqref{eq:reformulated_loss} in a computationally efficient manner.

\subsection{Constructing the GOGN Jacobian from Available Gradients}

To apply the Gauss-Newton method to the reformulated problem~\eqref{eq:reformulated_loss}, we require the Jacobian of $\bsrho$. The crucial observation is that this Jacobian can be constructed entirely from the gradients $\nabla \phi_i$, which are already available from the gradient computation in~\eqref{eq:gradient}.

Since $\nabla \phi_i(\bm) = \rho_i(\bm) \nabla \rho_i(\bm)$,
we have,
\begin{equation}
    \nabla \rho_i(\bm) = \frac{\nabla \phi_i(\bm)}{\rho_i(\bm)} = \frac{\nabla \phi_i(\bm)}{\sqrt{2 \phi_i(\bm)}}.
    \label{eq:gradient_ri}
\end{equation}
The Jacobian of $\bsrho$ at $\bm$ is therefore given by
\begin{equation}
    \begin{split}
    {\bJ}^{\rm GO}(\bm) 
    &= [\nabla \rho_1(\bm), \nabla \rho_2(\bm), \ldots, \nabla \rho_N(\bm)]^\top 
    \\
    &=  
    \left[\frac{\nabla \phi_1(\bm)}{\sqrt{2 \phi_1(\bm)}} ,
    \frac{\nabla \phi_2(\bm)}{\sqrt{2 \phi_2(\bm)}} ,
    \ldots,
    \frac{\nabla \phi_N(\bm)}{\sqrt{2 \phi_N(\bm)}}
    \right]^\top \in \RR^{N \times p}.
    \end{split}
    \label{eq:jacobian_tilde}
\end{equation}
The advantage of this reformulation  is that constructing $\bJ^{\rm GO}(\bm)$ requires only the gradients $\nabla \phi_i(\bm)$ and the function values $\phi_i(\bm)$, both of which are already computed during the standard gradient evaluation step. \emph{No additional PDE solves are required to form this Jacobian}; in contrast to traditional Gauss-Newton methods where each Jacobian-vector product would necessitate solving sensitivity equations.

\subsection{The Gradient-Only Gauss-Newton Update}

With the GOGN Jacobian at hand, we now derive the GOGN update-step.  The Gauss-Newton approximation to the Hessian of $F=\Phi + R$ at step $k$ is given by
\[
    \bH_{k}^{\rm GO} = {\bJ}^{\rm GO}(\bm_k)^\top {\bJ}^{\rm GO}(\bm_k) + \nabla^2 R(\bm_k),
\]
where the first term approximates the Hessian of the data misfit $\Phi$ and the second term is the exact Hessian of the regularizer.
At iteration $k$, the GOGN step $\bp_k^{\rm GO}$ solves the quadratic minimization problem
\begin{equation}
    \bp_k^{\rm GO} = \argmin_{\bp}  \; \left(\nabla \Phi(\bm_k) + \nabla R(\bm_k)\right)^\top \bp + \frac{1}{2}\,\bp^\top \bH_{k}^{\rm GO} \bp,
    \label{eq:GOGN_update_quadratic_rep}
\end{equation}
which yields the explicit update formula
\begin{equation}
    \bp^{\rm GO}_k =  - \left(\bH_k^{\rm GO}\right)^{-1} \nabla F(\bm_k). 
    \label{eq:GOGN_update_explicit}
\end{equation}
The model parameters are then updated as $\bm_{k+1} = \bm_k + \alpha_k \bp_k^{\rm GO}$, where $\alpha_k > 0$ is a step size determined by linesearch. Importantly, computing this update requires no additional PDE solves beyond those already performed for gradient evaluation, as the matrix ${\bJ}^{\rm GO}(\bm_k)$ is constructed directly from available gradient information. The regularization term $R(\bm_k)$ plays a crucial role in ensuring that $\bH_k^{\rm GO}$ remains positive definite and invertible, which is essential for the method's stability and convergence.

\subsection{Convergence Analysis}

For completeness, we provide the convergence properties of the GOGN method under standard  regularity conditions. The analysis is standard and relies on the well-posedness of the regularizer.

\begin{theorem}
\label{thm:GOGN_convergence}
Let $\cC = \{\bm \: | \: F(\bm) \leq F(\bm_0) \}$ be the sublevel set defined by a starting point  $\bm_0$, and assume $\cC$ is compact. Suppose $F$ has Lipschitz-continuous gradients on an open set containing $\cC$, and the regularizer satisfies
\begin{equation}
    \mu \bI \preceq \nabla^2 R(\bm) \preceq M \bI
    \label{eq:reg_spectral_bounds}
\end{equation}
for all $\bm \in \cC$ with $0 < \mu \leq M < \infty$. 
Consider the GOGN iterates,
\(
    \bm_{k+1} = \bm_k + \alpha_k\, \bp_k^{\rm GO},
\)
where $\bp_k^{\rm GO}$ is given by~\eqref{eq:GOGN_update_explicit}, and $\alpha_k$ is chosen to satisfy the Wolfe conditions~\cite{Nocedal}.
Then 
\begin{equation}
    \lim_{k\to\infty} \|\nabla F(\bm_k)\| = 0.
\end{equation}
\end{theorem}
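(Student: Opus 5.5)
Our plan is to invoke Zoutendijk's theorem for line-search methods satisfying the Wolfe conditions~\cite{Nocedal}. We must show two things: that $\bp_k^{\rm GO}$ is a descent direction, and that the angle $\theta_k$ between $\bp_k^{\rm GO}$ and $-\nabla F(\bm_k)$ satisfies $\cos\theta_k \geq \delta > 0$ uniformly in $k$. Given these, Zoutendijk's condition $\sum_k \cos^2\theta_k \|\nabla F(\bm_k)\|^2 < \infty$ forces $\|\nabla F(\bm_k)\| \to 0$.

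\emph{Step 1: boundedness of the iterates and of $\bJ^{\rm GO}$.} The Wolfe sufficient-decrease condition, together with descent, gives $F(\bm_{k+1}) \le F(\bm_k)$. Hence every iterate lies in the compact set $\cC$. On $\cC$, each $\nabla\phi_i$ is continuous and therefore bounded. We also need $\|\nabla\rho_i\| = \|\bJ_i^\top \br_i\|/\|\br_i\| \le \|\bJ_i\|$, which is bounded on $\cC$ because the $G_i$ are $C^1$ and $\cC$ is compact. This bound holds wherever $\rho_i \ne 0$. When $\rho_i=0$, we adopt the convention that the $i$th row of $\bJ^{\rm GO}$ is zero, which is natural since $\nabla\phi_i=0$ there. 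With this, $\|\bJ^{\rm GO}(\bm)\|^2 \le \sum_i \sup_{\cC}\|\bJ_i\|^2 =: L^2$ uniformly on $\cC$.

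\emph{Step 2: uniform spectral bounds on $\bH_k^{\rm GO}$.} Since $\bJ^{{\rm GO}\top}\bJ^{\rm GO} \succeq 0$, assumption~\eqref{eq:reg_spectral_bounds} gives $\mu\bI \preceq \bH_k^{\rm GO} \preceq (L^2+M)\bI$. Thus $\bH_k^{\rm GO}$ is invertible, so $\bp_k^{\rm GO}$ in~\eqref{eq:GOGN_update_explicit} is well defined. Moreover, $\nabla F(\bm_k)^\top \bp_k^{\rm GO} = -\nabla F^\top (\bH_k^{\rm GO})^{-1}\nabla F < 0$ whenever $\nabla F(\bm_k)\neq 0$, so the direction is a descent direction. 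This also justifies the monotonicity used in Step 1; the two steps are best run together by induction on $k$. The standard estimate for symmetric positive definite matrices then gives
\[
\cos\theta_k = \frac{\nabla F^\top (\bH_k^{\rm GO})^{-1}\nabla F}{\|\nabla F\|\,\|(\bH_k^{\rm GO})^{-1}\nabla F\|} \;\ge\; \frac{1}{\kappa(\bH_k^{\rm GO})} \;\ge\; \frac{\mu}{L^2+M} =: \delta > 0 .
\]

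\emph{Step 3: conclusion.} The hypotheses of Zoutendijk's theorem hold: $F$ is bounded below on $\cC$ by compactness, $\nabla F$ is Lipschitz on an open set containing $\cC$, and the steps satisfy the Wolfe conditions. Therefore $\sum_k \cos^2\theta_k\|\nabla F(\bm_k)\|^2<\infty$. Combined with $\cos\theta_k\ge\delta$, this gives $\sum_k\|\nabla F(\bm_k)\|^2<\infty$, and hence $\|\nabla F(\bm_k)\|\to 0$.

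The main obstacle is Step 1, namely the uniform bound on $\bJ^{\rm GO}$. Its rows $\nabla\phi_i/\sqrt{2\phi_i}$ appear singular as $\phi_i\to 0$. We resolve this by the estimate $\|\nabla\rho_i\|\le\|\bJ_i\|$ above, which requires assuming $G_i\in C^1$ on a neighborhood of $\cC$, together with the zero-row convention. If one prefers not to assume this, an alternative is to note that the argument only needs an upper bound on the Hessian approximation. That bound could then be enforced by safeguarding, for example by replacing $\rho_i$ with $\max(\rho_i,\epsilon)$.
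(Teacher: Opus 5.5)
Your proposal is correct and follows the paper's proof: uniform spectral bounds on $\bH_k^{\rm GO}$ give descent and $\cos\theta_k \ge \mu/(M+M_J)$, and Zoutendijk's theorem then gives $\|\nabla F(\bm_k)\|\to 0$. Your bound $\|\nabla\rho_i\| \le \|\bJ_i\|$, together with the zero-row convention, is actually more careful than the paper, which simply asserts that $\bJ^{\rm GO}$ is continuous (hence bounded) on $\cC$ and so glosses over the division by $\sqrt{2\phi_i}$ where $\phi_i$ vanishes.
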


\begin{proof}
Since 
the gradients $\nabla \phi_i$, $i=1,\ldots,N$, are continuous on $\cC$, so is the GOGN Jacobian $\bJ^{\rm GO}$ defined in~\eqref{eq:jacobian_tilde}, which is therefore  bounded on $\cC$. This implies in turn that for each
$\bm \in \cC$,
the matrix $\bJ^{\rm GO}(\bm)^\top \bJ^{\rm GO}(\bm)$ is positive semi-definite with bounded eigenvalues $\lambda_i(\bm)$: $0\leq \lambda_i(\bm)\leq M_J$, where
\[
M_J = \max_{\bm \in \cC} \|\, \bJ^{\rm GO}(\bm)^\top \bJ^{\rm GO}(\bm) \,\|_2 < \infty
\]

Using the spectral bounds on $\nabla^2 R(\bm)$ from~\eqref{eq:reg_spectral_bounds}, we obtain a bounded spectrum for the regularized GOGN Hessian:
\begin{equation}
    \mu \bI \preceq \bH_{\bm}^{\rm GO} = \bJ^{\rm GO}(\bm)^\top \bJ^{\rm GO}(\bm) + \nabla^2 R(\bm) \preceq (M + M_J) \bI
\end{equation}
for all $\bm \in \cC$. Since $\bH_{\bm}^{\rm GO}$ is positive definite with $\mu > 0$, it is invertible with 
\begin{equation}
    \frac{1}{M + M_J} \bI \preceq \left(\bH_{\bm}^{\rm GO}\right)^{-1} \preceq \frac{1}{\mu} \bI.
\end{equation}
In addition, for any $\bm \in \cC$ with $\nabla F(\bm) \neq \bszero$, we have
\begin{equation}
\nabla F(\bm)^\top \bp^{\rm GO} = - \nabla F(\bm)^\top \left(\bH_{\bm}^{\rm GO}\right)^{-1} \nabla F(\bm)
   \leq - \frac{1}{M + M_J} \| \nabla F(\bm)\|^2  < 0. 
\end{equation}
This shows that $\bp_k^{\rm GO}$ is a descent direction at every iteration. Since $\alpha_k$ satisfies the Wolfe conditions, we have $F(\bm_{k+1}) < F(\bm_k)$, which implies that all iterates remain in the sublevel set $\cC$. To prove the convergence of the gradients, note
that by the Wolfe conditions and the descent property, Zoutendijk's theorem~\cite[Theorem 3.2]{Nocedal} guarantees 
\begin{equation}
    \sum_{k=0}^{\infty} \cos^2(\theta_k) \|\nabla F(\bm_k)\|^2 < \infty,
\end{equation}
where $\theta_k \in (0, \pi)$ is the angle between the steepest descent direction $-\nabla F(\bm_k)$ and the search direction $\bp_k^{\rm GO}$. This implies
\(
    \cos^2(\theta_k) \|\nabla F(\bm_k)\|^2 \to 0,\,\,
\)
and convergence of the gradients follows once we show
 that $\cos(\theta_k)$ is uniformly bounded away from zero. By definition,
\begin{equation}
    \cos(\theta_k) 
    = \frac{- \nabla F(\bm_k)^\top \bp_k^{\rm GO}}{\|\bp_k^{\rm GO}\|\, \|\nabla F(\bm_k)\|}
    = \frac{\nabla F(\bm_k)^\top \left(\bH_k^{\rm GO}\right)^{-1} \nabla F(\bm_k)}{\|\left(\bH_{k}^{\rm GO}\right)^{-1} \nabla F(\bm_k)\| \,\|\nabla F(\bm_k)\|}.
\end{equation}
Using the Cauchy-Schwarz inequality and the spectral bounds on $\left(\bH_{\bm_k}^{\rm GO}\right)^{-1}$, we obtain
\begin{equation}
    \cos(\theta_k) 
    \geq \frac{\frac{1}{M + M_J} \|\nabla F(\bm_k)\|^2}{\frac{1}{\mu} \| \nabla F(\bm_k)\|^2}
    = \frac{\mu}{M+M_J} > 0, \quad k \in \ZZ^+
\end{equation}
Since $\cos(\theta_k) \geq {\mu}/\medmath{(M+M_J)} > 0$ for all $k$, we have $\cos^2(\theta_k) \geq \left(\frac{\mu}{M+M_J}\right)^2 > 0$, which combined with Zoutendijk's condition implies
$\|\nabla F(\bm_k)\| \to 0$. \qed
\end{proof}

This theorem guarantees global convergence of GOGN to a stationary point under mild regularity conditions. 
The lower bound $\mu > 0$ on the regularizer Hessian ensures that the approximate Hessian $\bH_k^{\rm GO}$ remains uniformly positive definite, which is critical for both the descent property and the uniform bound on the cosine of the search angle.


\subsection{Summary and Computational Discussion}
The proposed GOGN method achieves computational efficiency through a systematic reformulation of the optimization problem. By viewing the objective as a least-squares problem in terms of per-source residual norms, we enable the construction of a Jacobian ${\bJ}^{\rm GO}$ directly from the gradients $\nabla \phi_i$ already computed during standard gradient evaluation. This eliminates the need for additional PDE solves that would otherwise be required for traditional Jacobian-vector products in Gauss-Newton methods. The resulting GOGN update can be computed efficiently, particularly when $N \ll p$ as is typical in large-scale inverse problems. This approach provides a favorable compromise between first-order and traditional second-order methods. In particular, it empirically exhibits the fast local convergence properties characteristic of Gauss-Newton methods while maintaining a per-iteration computational cost comparable to that of gradient-based methods. Importantly, the GOGN reformulation \emph{can be applied to general misfit functions $\phi_i$ which are not sums of squares}, as long as we have access to the gradients $\nabla \phi_i,$ making it applicable in more cases than traditional Gauss-Newton methods. As long as the data misfit $\Phi$ is a sum of differentiable functions $\phi_i,$ the GOGN approach can rewrite $\Phi$ as a sum of squares and exploit this structure using only the gradients $\nabla \phi_i.$

We also note that in all our numerical experiments we 
use in the regularization function $R ( \bm ) = \medmath{\frac 12} \| \bD ( \bm - \bm_0 ) \|^2$ a discretization $\bD$ of $\nu \rI - {\rm \Delta}$, where ${\rm \Delta}$ is the Laplace operator (known to be negative semi-definite), and $\nu >0$ controls the length scale of heterogeneity promoted by the regularizer. The choice to use an operator of this form is a common smoothness regularizer in FWI \cite{Bui-Thanh2013-zg, Bessel}. Furthermore, our choice of regularization operator has an invertible Hessian $\bD^\top \bD$ and thus fits within the theoretical assumptions in~\eqref{eq:reg_spectral_bounds} (as opposed to the more standard choice to use a discretization of ${\rm \Delta},$ which leads to a singular Hessian).

\section{Related Work}
\label{section:related_work}

The work most closely related to ours is the source-subspace method of~\cite{SSTape}, which projects the data of a nonlinear least-squares problem onto a low-dimensional subspace. Our approach extends this framework in two key directions. First, our method applies to general sums of misfit functions of the form~\eqref{eq:emp-min}, whereas the source-subspace method is restricted to nonlinear least-squares formulations. Second, our formulation accommodates a broader class of regularization strategies, rather than being tied to the structure inherent in least-squares problems. The source-subspace strategy has demonstrated strong empirical success in large-scale seismic imaging, including regional-scale full waveform inversion (FWI) of the Southern Californian crust by~\cite{tape2010seismic}, which shows its effectiveness in regimes where the number of data samples satisfies $N \ll p$.

More broadly, our work can be viewed as a subspace-method approach for large-scale inverse problems. Subspace methods mitigate the computational burden of forming or inverting full Hessian matrices by restricting optimization updates to carefully constructed low-dimensional subspaces. For example, Kennett and Williamson~\cite{Kennett} proposed projecting the Newton Hessian onto subspaces in model space, with basis vectors chosen to capture dominant features of the inverse problem. These ideas were later extended to multi-parameter joint inversion settings~\cite{SambridgeSS}. Multilevel finite element and finite volume strategies for direct current resistivity inversion~\cite{fung2019multiscale} can also be interpreted within this broader subspace-reduction paradigm.

The Newton–CG method, which has been widely used in PDE-constrained optimization~\cite{haber2014computational, petra2014computational, mang2017lagrangian, Epanomeritakis_2008, Castellanos2015-ma, operto}, can similarly be viewed as implicitly projecting the Hessian onto a Krylov subspace. In practice, constructing and exploring such subspaces requires repeated Jacobian–vector and vector–Jacobian products, each corresponding to additional PDE solves. Although conjugate gradient is more economical than explicitly forming and projecting the Hessian, as in~\cite{Kennett}, the total cost still scales with the dimension of the Krylov subspace.

More recently, randomized sketching techniques have been proposed to construct low-rank Hessian approximations~\cite{gower2019rsn}, providing an alternative mechanism for dimensionality reduction. However, these model-space projection approaches generally require a number of additional PDE solves that grows linearly with the subspace dimension. In contrast, our data-space projection method does not require any additional PDE solves beyond those already needed for gradient computation, yielding a fundamentally different and more economical scaling behavior.



\section{Numerical Experiments}
We demonstrate the effectiveness of GOGN on FWI examples. Our experiments are run using Deepwave~\cite{richardson_alan_2023} on a 2021 MacBook Pro with an Apple M1 Pro Chip and 16 GB of RAM.

\subsection{Experimental Setup}
We consider an acoustic FWI problem associated with a 2D scalar wave equation 
\begin{equation}\label{eq:Weq}
   {\rm \Delta} u_i - {\rm \medmath{\frac{1}{c^2} 
   \frac{\partial^2}{\partial t^2}}\,}u_i = 
   f_i, \quad i=1,\ldots,N,
\end{equation}
on the square domain $\Omega = [0{\rm km}, 480{\rm km}]^2$ with PML (Perfectly Matched Layer) absorbing boundary conditions. Here, $u_i$ is the $i^{th}$ state variable (or wavefield), $c$ is the inhomogeneous wave speed parameter (to be reconstructed), and $f_i ( \bx, t ) = 
s ( t ) \delta ( \bx - \bx_i^{\rm s} )$ is the $i^{th}$ source-term, where the source-time function $s ( t )$ is a Ricker wavelet with a frequency of $.1 \rm{Hz}$, $\delta$ is the 2-dimensional Dirac delta distribution, and $\bx_i^{\rm s}$ is the location of the $i^{th}$ source. 

To compute the wave-equation solutions, we use the finite-difference scheme implemented in the package Deepwave \cite{richardson_alan_2023}, which approximately solves \eqref{eq:Weq} in rectangular domains. Considering perturbations of the inhomogeneous P-wave speed $c$ from a reference speed $c_0 = 3000 m/s,$ we discretize the dimensionless parameter ${\delta c}/{c_0}$ onto a $200 \times 200$ grid, and represent it by a \emph{model parameter vector} $\bm \in \RR^p,$ with $p = 40,000.$ We record the output at $200$ time steps with $\Delta t =1s,$ though it should be noted that this is not necessarily the time step of the numerical solver, as Deepwave will take finer time steps when necessary to satisfy the CFL condition \cite{richardson_alan_2023}. We consider wave speed perturbations of up to $5\%,$ which gives a minimum speed of $2850m/s$ and a minimum of at least $11$ cells per wavelength, thus we can expect our simulations to remain numerically stable.

The mapping from a proposed model parameter vector to (preprocessed) synthetic seismograms at observed times $0, \Delta t,  \dots, (n_t - 1) \Delta t,$ for the $i$th event defines the $i$th parameter-to-observable mapping $G_i: \RR^p \to \RR^n.$ We can thus write $\bd_i = G_i ( \bm ) + \bseps_i,$ where $\bseps_i$ is the previously mentioned observational noise. For the purpose of these examples, we generate this observational noise as follows: Letting $\rF$ and $\rF^{-1}$ denote the FFT and its inverse applied over time, and letting $\bz_i \sim \cN_n ( \bszero, \bI )$ be a standard Gaussian vector, we set
$$
\bseps_i = \sigma\, {\rm Re} (
\rF^{-1} (
\bz_i \odot \rF ( G_i ( \bm ) ))),
$$
where $\sigma > 0$ is the noise level, and $\odot$ is the Hadamard product. This ensures that the noise concentrates on the same frequency bands as our observations, so that we avoid a total inverse crime; however, it should be noted that we generate the data for these inversions using the same computational grid, when it would be more appropriate to use a different computational grid to generate data that we use to invert it. We also employ a receiver-weighting approach common in the FWI community \cite{Ruan2019}, which for brevity we leave in Appendix \ref{app:additional_experimental_details}.

We regularize our problem using a smoothing Tikhonov-based regularizer of the form 
\begin{equation}
    R(\bm) = \medmath{\frac{1}{2}}\, \|\bD(\bm - \bm_0)\|^2,
\end{equation}
where $\bD$ is a finite difference discretization of the smoothing operator $\lambda ( \nu I - \Delta )$.
The effect of this regularization is a balance of smoothing and limiting the perturbation distance from $\bm_0$, and has  been useful in FWI~\cite{Bui-Thanh2013-zg, Bessel}. Furthermore, this regularizer has an invertible Hessian $\bD^\top \bD$ whose inverse may be used as smoothing operator, enabling it to both fit into our framework and also to be utilized in standard practice FWI methodology, which applies such a smoothing operator to updates as a form of regularization \cite{Modrak}.

Finally, we test our algorithms on the reconstruction of a low velocity anomaly shaped like a smiley face under two source configurations. Sources and receivers sampled from a uniform distribution give an ideal configuration (as they lead to more accurate recoveries of the target model); however, source-receiver configurations for real-world regional and global scale seismic imaging are constrained by the distribution of receivers, which is subject to geographic and budgetary constraints, and the distribution of earthquakes along fault lines. For this reason, we also consider a realistic source coverage that mimics this by including a region with a high density of receivers, similar to the west coast of the US, and a region with a low density of receivers, similar to the Pacific Ocean. To test the effect of a increased number of sources, we have also tried adding more sources to these configurations by randomly perturbing the source locations to obtain two more configurations with additional sources. All configurations are shown in Figure~\ref{fig:source_setup}, along with the target model.

\begin{figure}[t]
    \centering
    \begin{tabular}{cc}
        \includegraphics[width=0.35\linewidth]{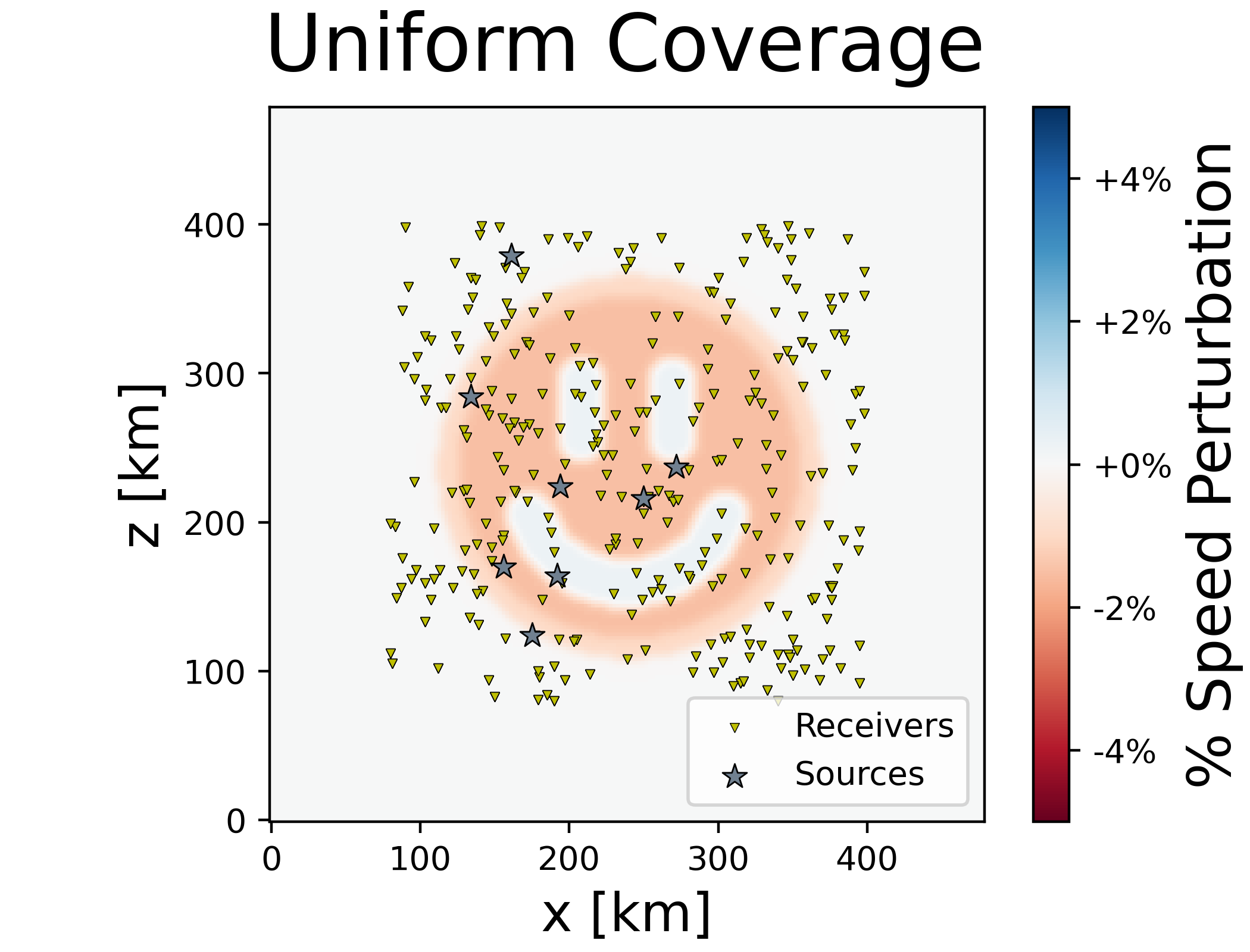}
        &
        \includegraphics[width=0.35\linewidth]{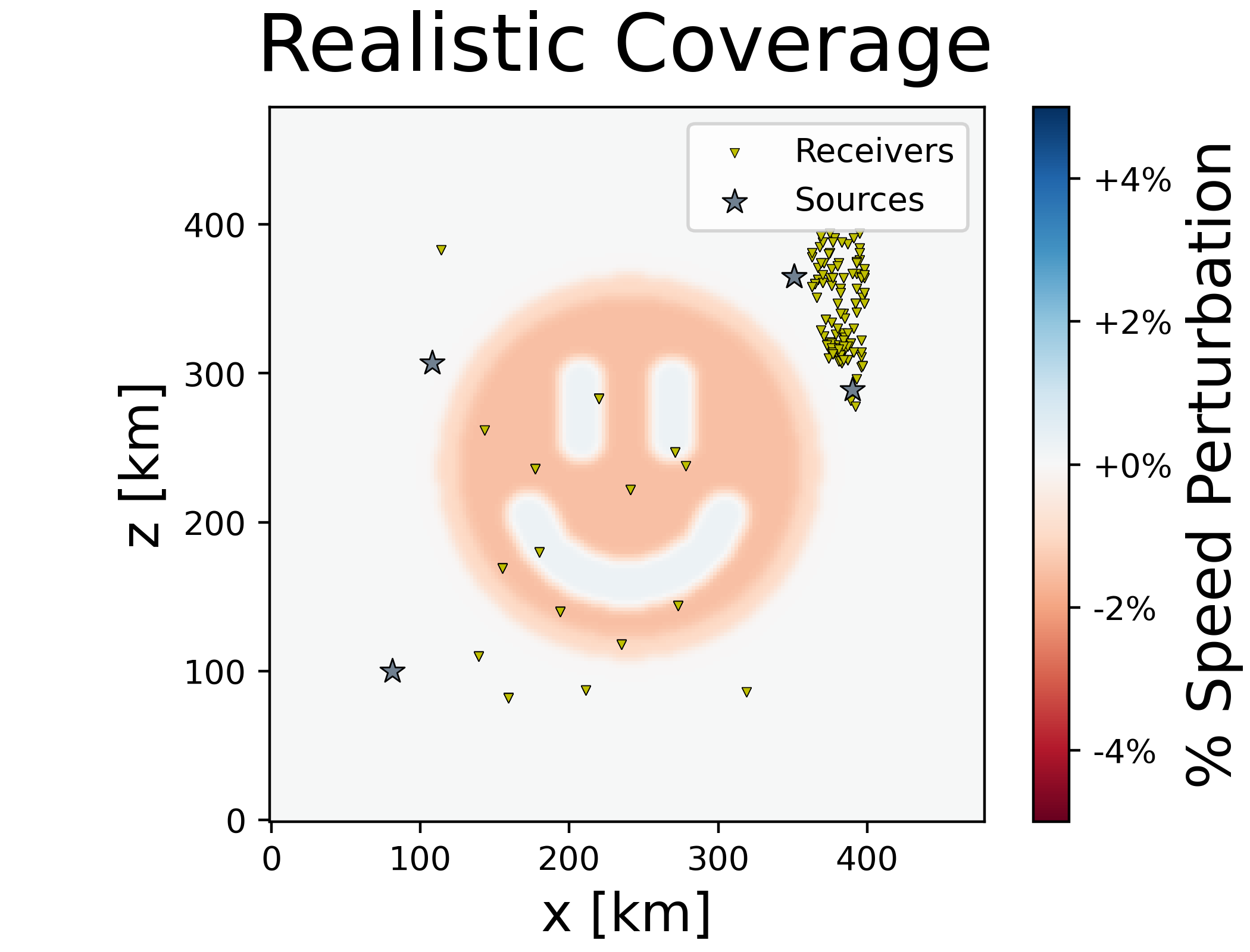} 
        \\
        \includegraphics[width=0.35\linewidth]{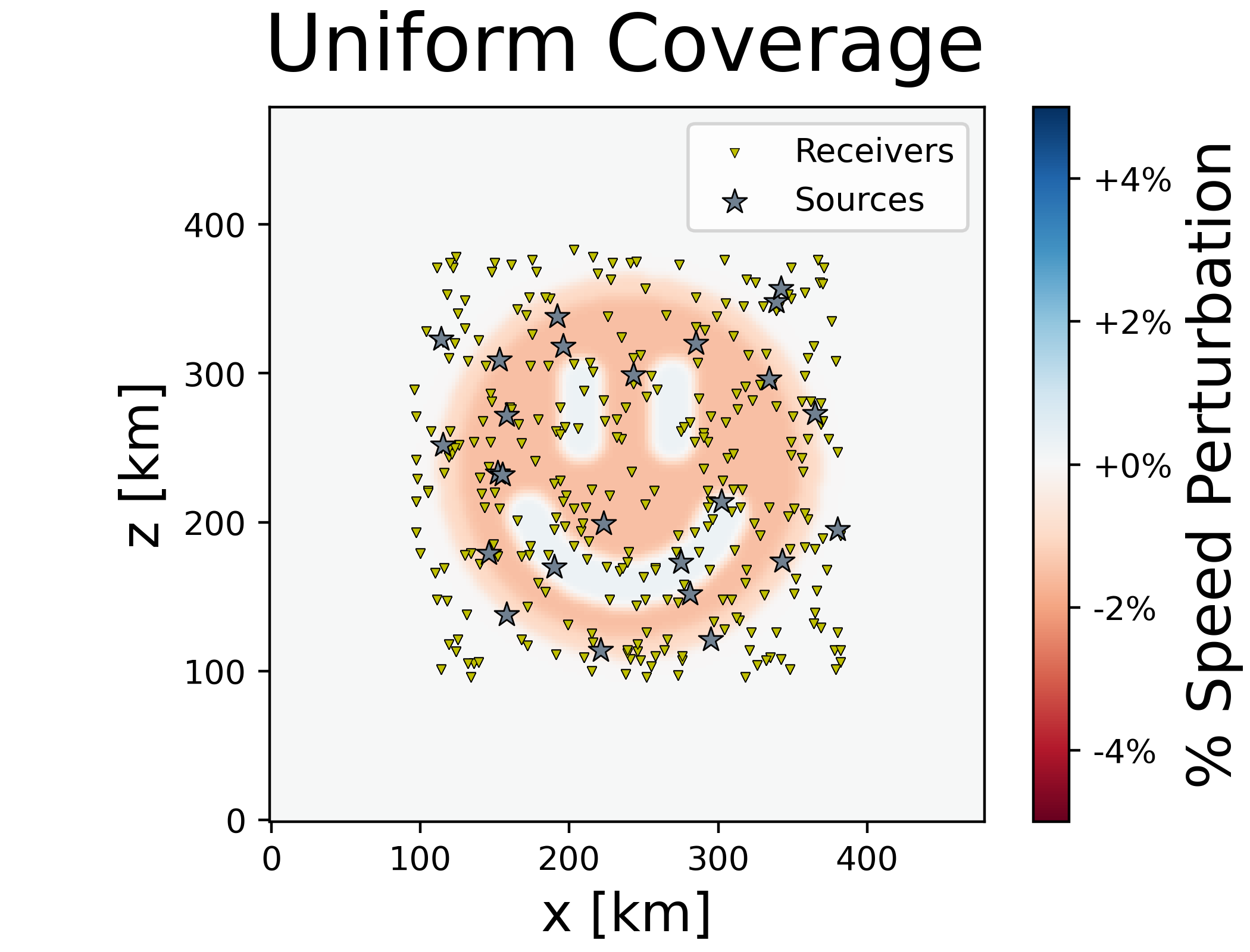}
        &
        \includegraphics[width=0.35\linewidth]{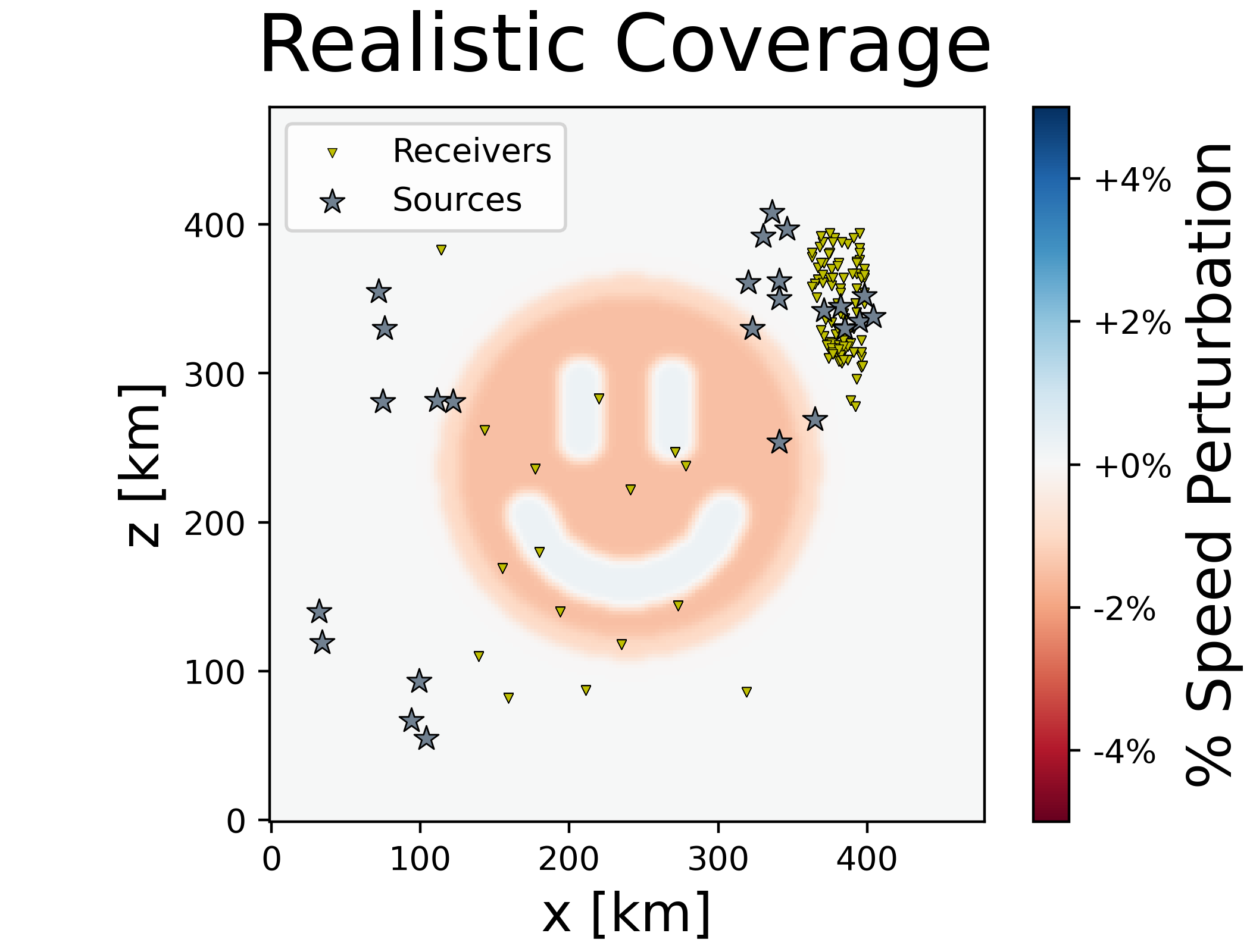} 
    \end{tabular}
    \caption{Our four source-receiver configurations displayed over the speed perturbation ${\delta c}/{c_0}$ for our target model 
    - on the top left, we sample $8$ sources and $300$ receivers from a uniform distribution over the square $[100{\rm km}, 400{\rm km}]^2,$ and on the top right, we transform $5$ source and $181$ receiver locations from around the  Pacific ocean to be contained in our domain, and configurations in the bottom row comprising versions of the top row with additional sources added (so that each has $25$ sources)}
    \label{fig:source_setup}
\end{figure}

\begin{figure}[t]
    \centering
    \begin{tabular}{ccc}
        \multicolumn{3}{c}{Uniform Coverage, $\sigma = 0.1$}
        \\
        \includegraphics[width=0.3\linewidth]{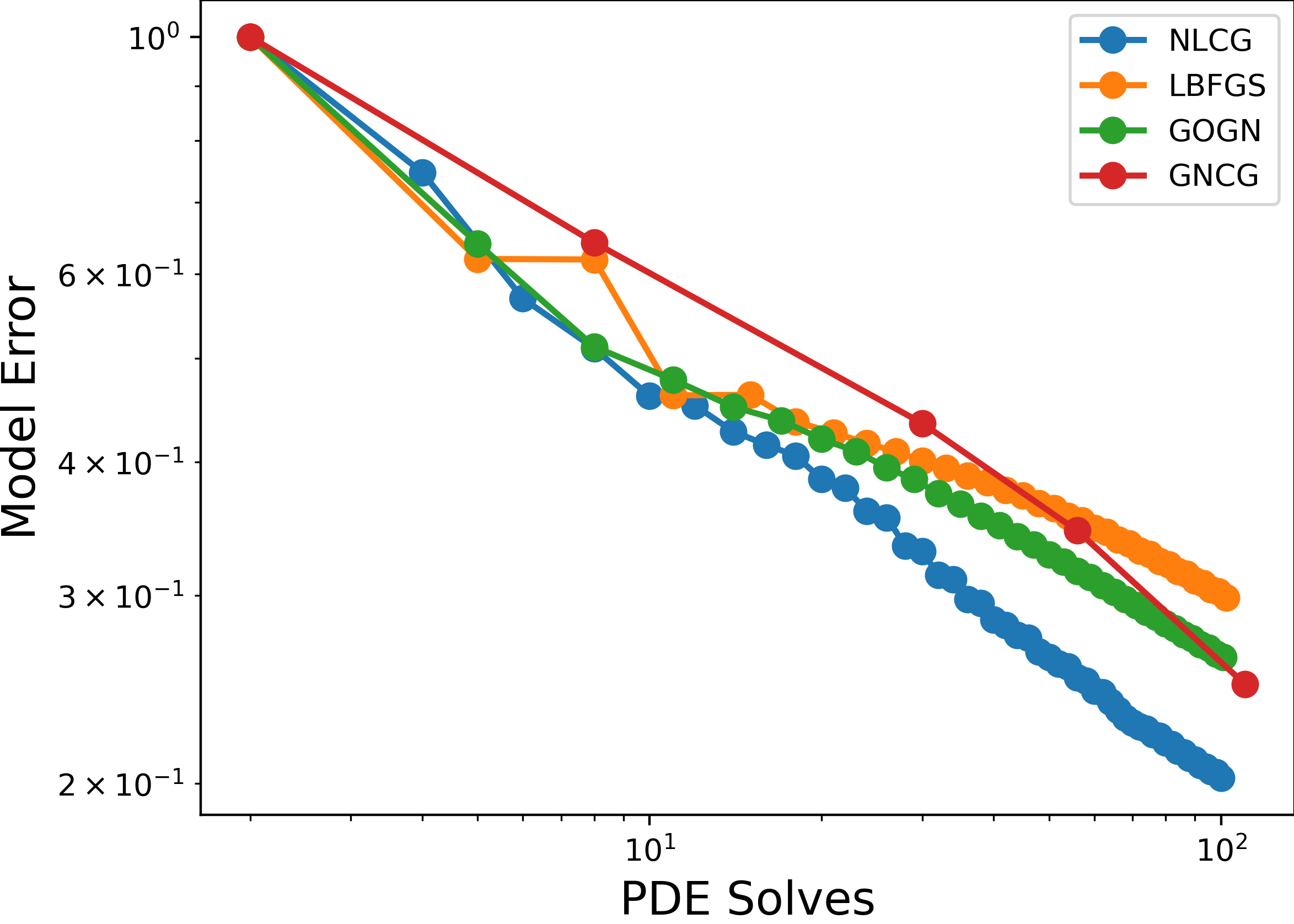}
        &
        \includegraphics[width=0.3\linewidth]{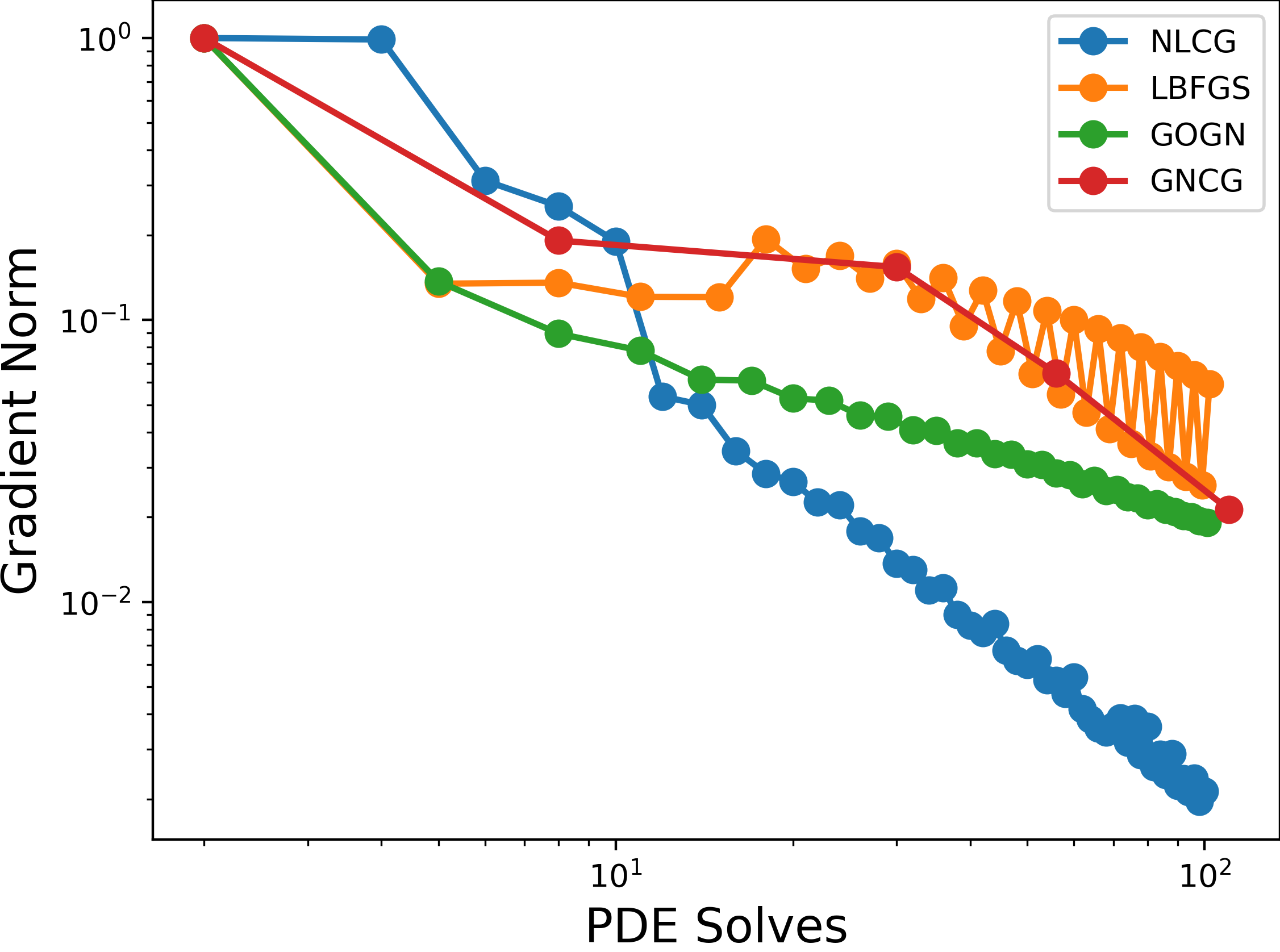}
        &
        \includegraphics[width=0.3\linewidth]{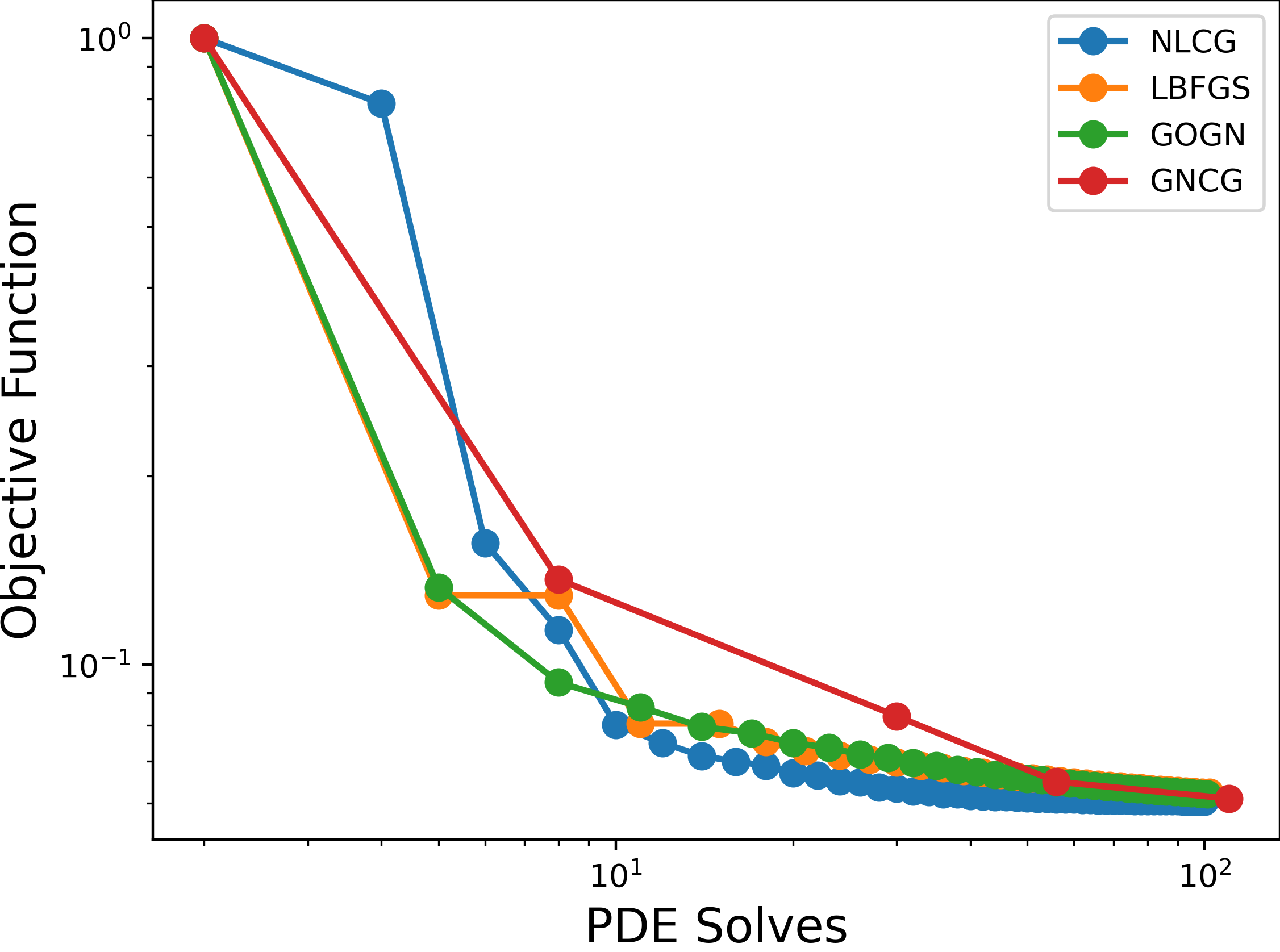}   
        \\
        \multicolumn{3}{c}{Realistic Coverage, $\sigma = 0.1$ }
        \\
        \includegraphics[width=0.3\linewidth]{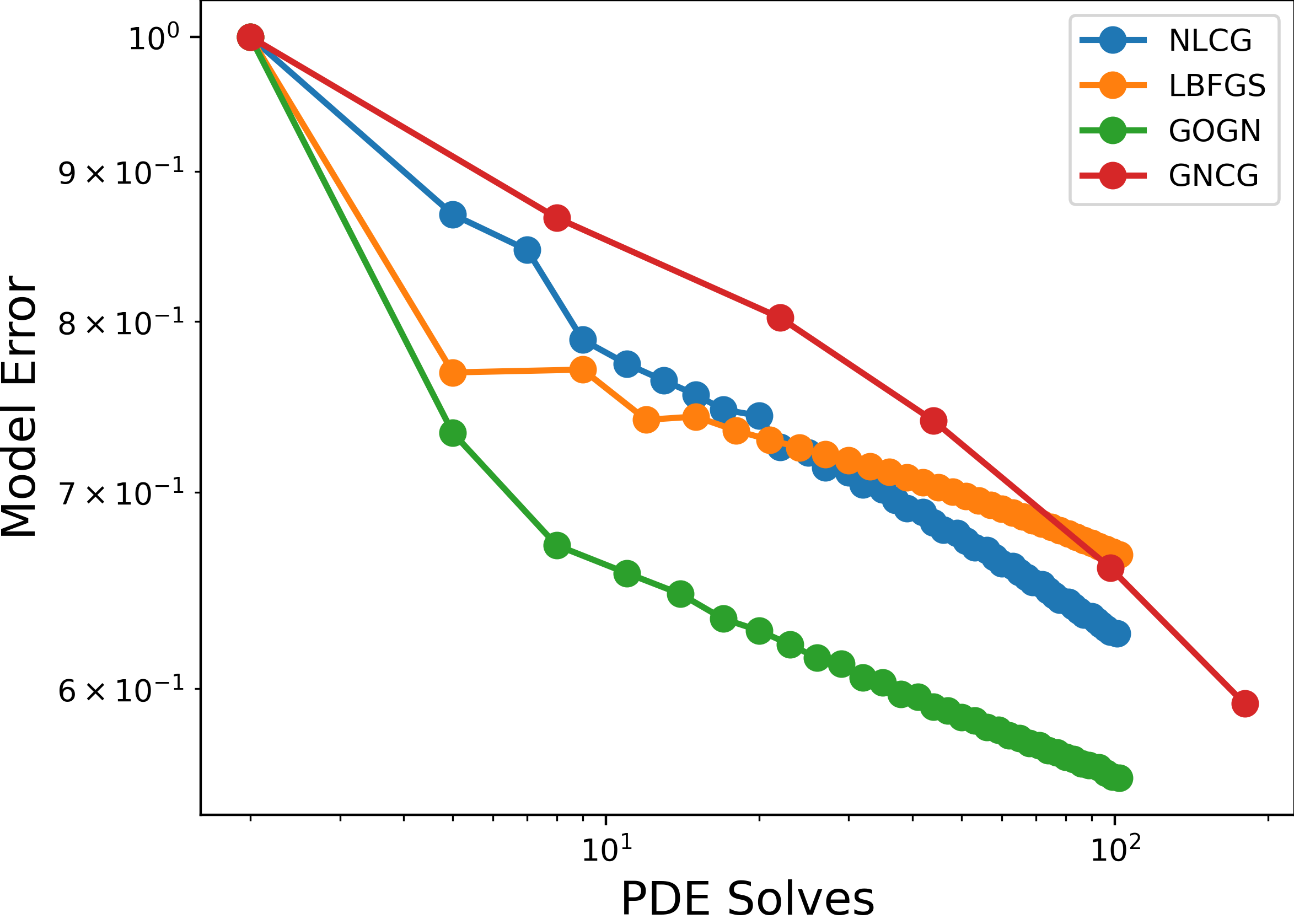}
        &
        \includegraphics[width=0.3\linewidth]{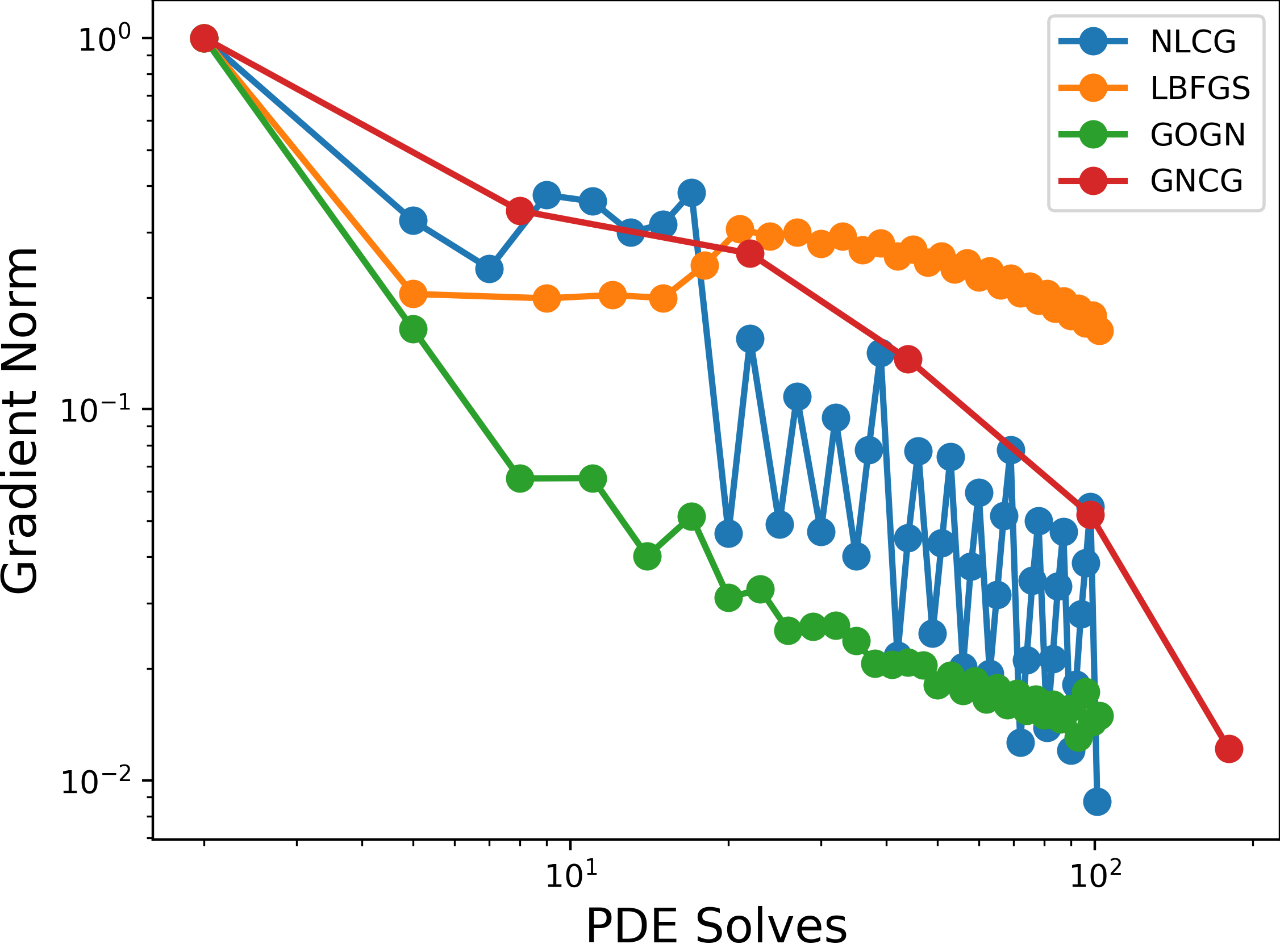}
        &
        \includegraphics[width=0.3\linewidth]{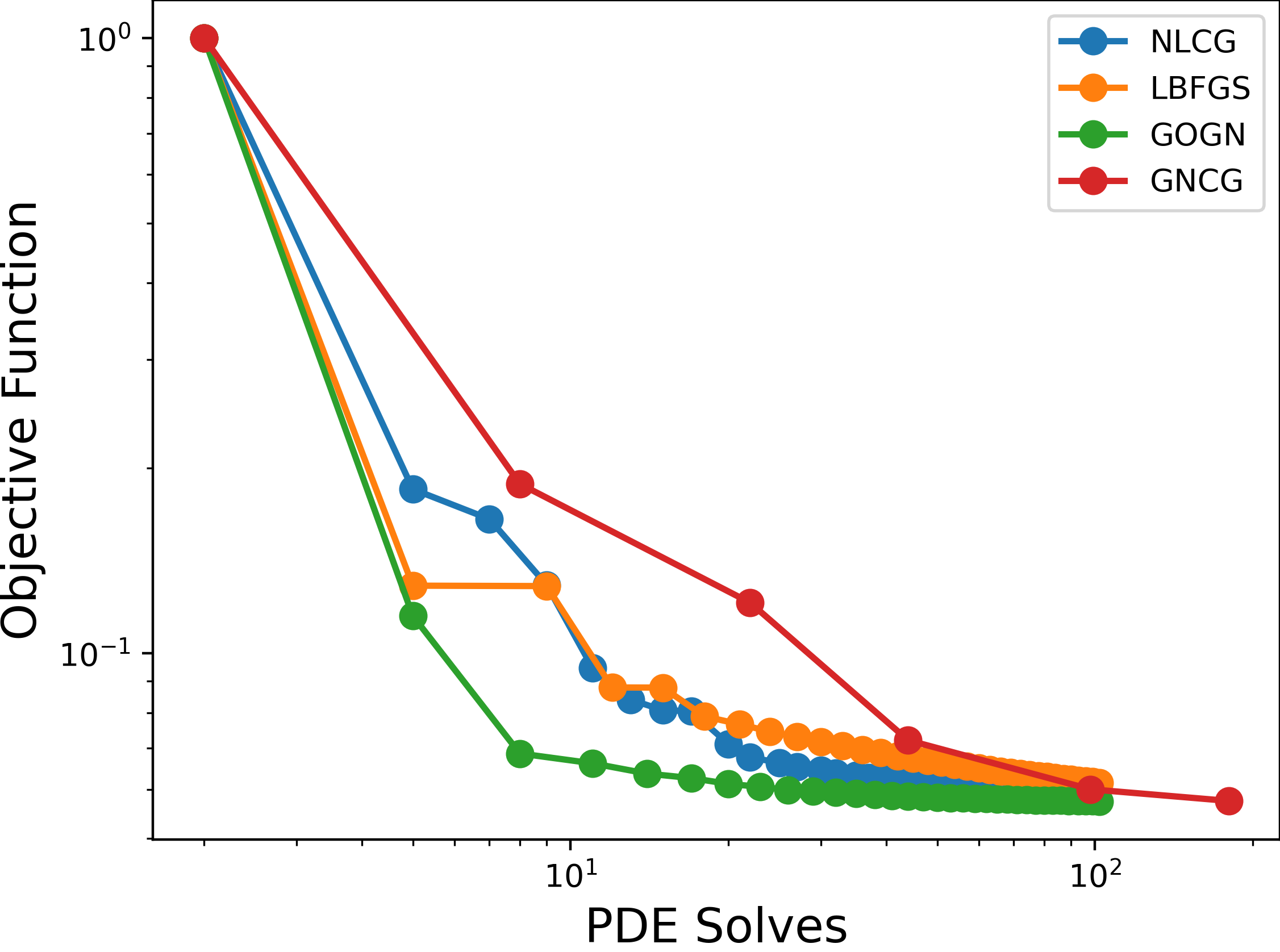}       
    \end{tabular}         
    \caption{Convergence plots for uniformly distributed receiver coverage (top row) and realistic coverage (bottom row) for nonlinear conjugate gradient (NLCG), limited-memory BFGS (LBFGS), gradient-only Gauss-Newton (GOGN) and Gauss-Newton CG (GNCG), with X-axes across all images denoting number of PDE solves during optimization, and Y-axes denoting model error (left), gradient norm (middle), and objective function values (right) for $8$ and $5$ sources, respectively, at a noise level of $\sigma = 0.1$ }
    \label{fig:convergence_plots_5sources}
\end{figure}
\begin{figure}[t]
    \centering
\includegraphics[width=0.4\linewidth]{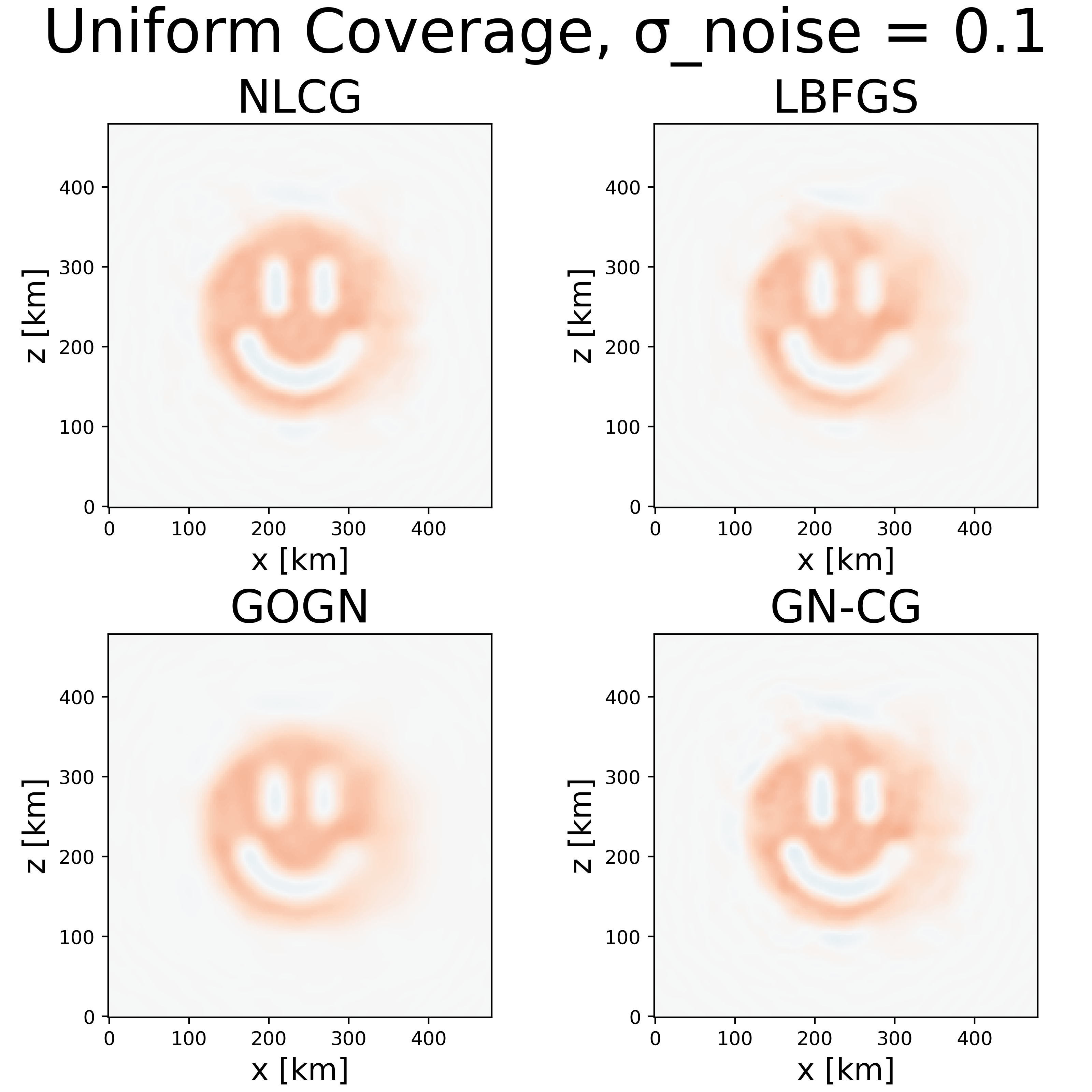}   
\includegraphics[width=0.4\linewidth]{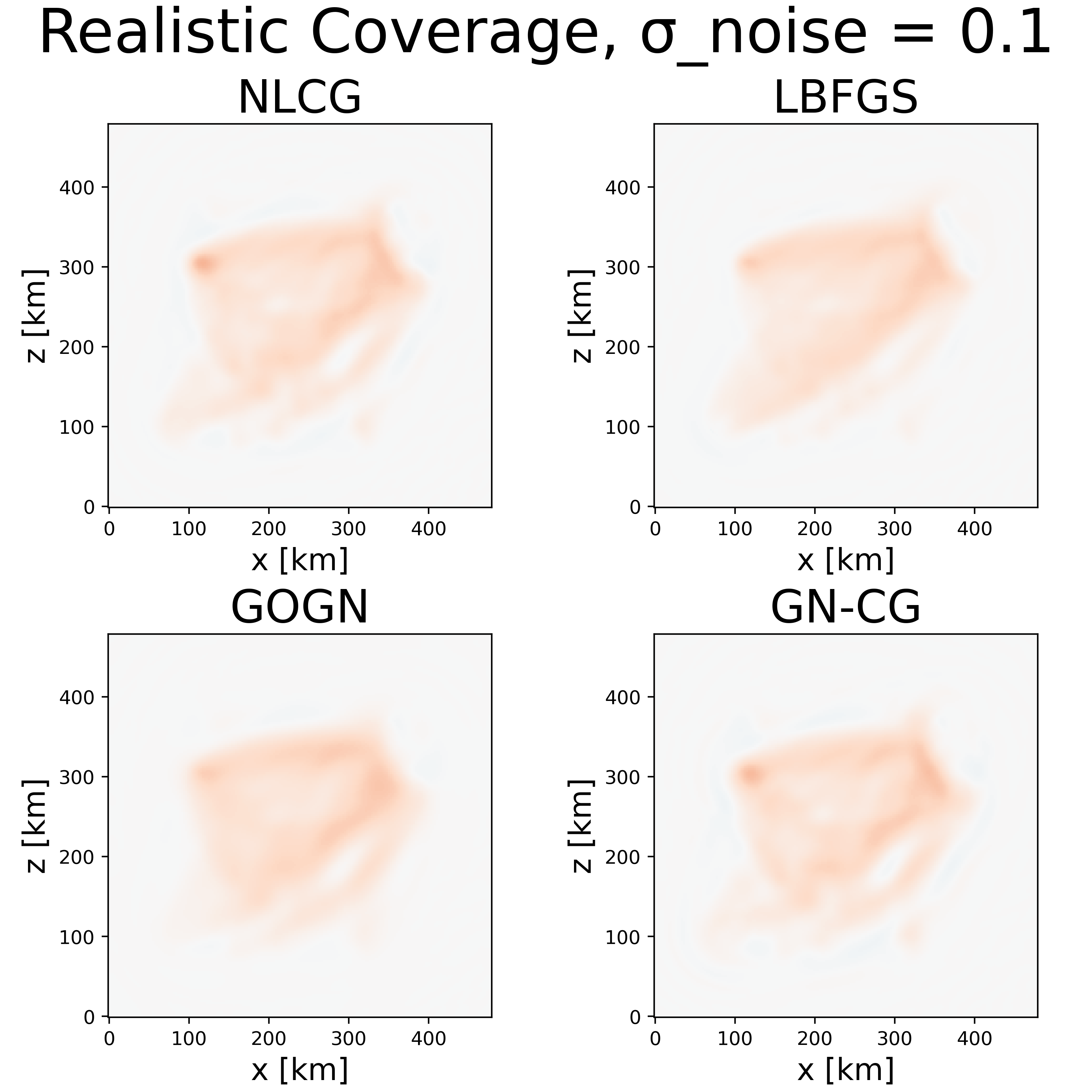} 
\includegraphics[width=.102\linewidth, trim=0cm 4cm 0cm 0cm, clip]{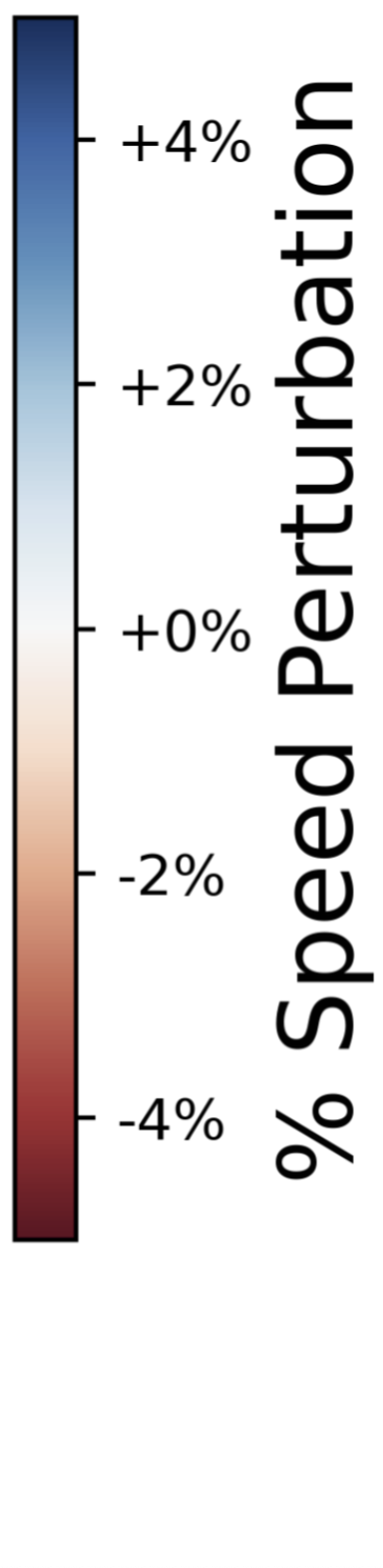}    
    \caption{Model reconstructions for noise level $0.1$ based on the experiments in Figure~\ref{fig:convergence_plots_5sources} }
    \label{fig:reconstructions_5sources}
\end{figure}
\subsection{Algorithmic Setup}
\label{subsec:alg_setup}
We compare the performance of the current ``best practice'' optimization algorithms for FWI, including  Nonlinear Conjugate Gradient (NLCG) and limited-memory BFGS (LBFGS) \cite{Modrak}, with GOGN and the Gauss-Newton-CG method developed in \cite{Epanomeritakis_2008} (GNCG). 
As our work is mainly measured in terms of number of PDE-solves (forward and adjoint), we stop our optimizations at the first iteration exceeding the computational cost of $100$ PDE solves. For each iteration we set a maximum of 10 linsearch iterations. We start with a maximum step size of $1$ for GNCG, and $.05 / \| \bp_k \|_\infty$ for other algorithms to keep the maximum magnitude of perturbations introduced in each iteration at or below $5\%$. For up to $5$ subsequent step lengths, we use quadratic interpolation based on the directional derivative and objective value at the current iterate, as well as the objective value at the last attempted step length. If this does not work we switch to Armijo backtracking. We accept the first step length that decreases the objective. 

For each algorithm, we employ the preconditioning (or regularization) strategies that yield the best performance in our experiments (and that have been found successful in the literature). 
For NLCG and LBFGS, it is common to incorporate a diagonal preconditioner approximating the diagonals of the Hessian at the initial iterate $\bm_0$, where we consider a diagonal approximation to the data-misfit Hessian of the form $\bH_0^{\rm diag} = {\rm diag} ( \nabla^2 \Phi ( \bm_0 ) \bsone),$ where $\bsone$ is a vector of ones (with safeguard to avoid negative values), and precondition our NLCG iterates with the Hessian approximation $\bH_0^{\rm diag} + \bD^\top \bD.$ This is similar to the smoothing considered in \cite{Bessel}, which was found successful for FWI, and will apply more smoothing in regions with less data coverage (similar to the smoothing strategy used in \cite{GladM15}), while at the same time correcting for imbalanced data coverage. 
Thus, in one step, the effect is similar to the separate application of preconditioning and smoothing developed in \cite{Modrak}. 

For LBFGS, we use the same Hessian approximation used in NLCG to initialize our inverse Hessian approximation, but must subsequently apply smoothing with the operator $(\bD^\top \bD)^{-1},$ following usual practice in FWI. Note we can compute a factorization of $\bD$ in advance in order to apply the smoothing operator efficiently.
As the GOGN update is given by the solution to $({\bJ^{\rm GO}}^\top \bJ^{\rm GO} + \bD^\top \bD) \bp_k = \nabla F ( \bm_k ),$ it is already smooth, and thus does not require any preconditioning or additional smoothing-- it is worth mentioning that we solve the this system with the formula
$$
\bp_k = 
(\bD^\top \bD)^{-1} {\bJ^{\rm GO}}^\top (
\bI +
\bJ^{\rm GO} (\bD^\top \bD)^{-1} {\bJ^{\rm GO}}^\top
)^{-1} (\bJ^{\rm GO} (\bm_k - \bm_0) - \bsrho_k) - ( \bm_k - \bm_0 ),
$$
which requires the computation of $(\bD^\top \bD)^{-1} {\bJ^{\rm GO}}^\top$ using $N$ products with $(\bD^\top \bD)^{-1}$ and the inversion of an $N \times N$ matrix $\bI +
\bJ^{\rm GO} (\bD^\top \bD)^{-1} {\bJ^{\rm GO}}^\top.$ For GNCG, we use the strategy developed by \cite{Epanomeritakis_2008}, which uses a history of products with previous iterations' Gauss-Newton Hessians to form a quasi-Newton preconditioner \cite{qnprec}. We initialize this preconditioner with $300$ Richardson iterations applied to $\bH_0^{\rm diag} + \bD^\top \bD$, as it is required that the preconditioner used for the CG solution be a linear operator (as opposed to initializing with CG iterations, which converge faster than Richardson iterations but act as a nonlinear operator, and thus can not be used).

\begin{figure}[t]
    \centering
    \begin{tabular}{ccc}
        \multicolumn{3}{c}{Uniform Coverage, $\sigma = 0.1$}
        \\
        \includegraphics[width=0.3\linewidth]{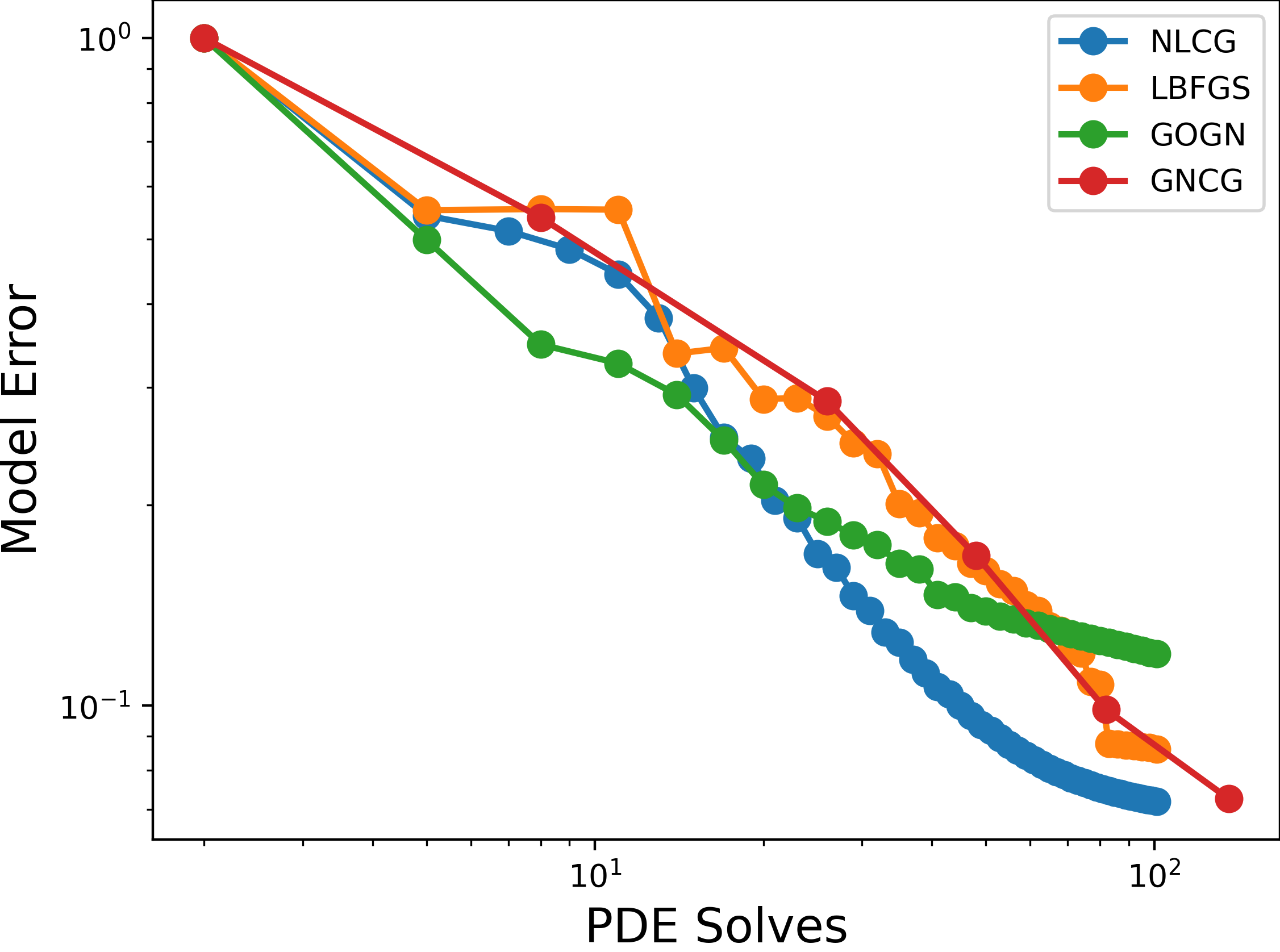}
        &
        \includegraphics[width=0.3\linewidth]{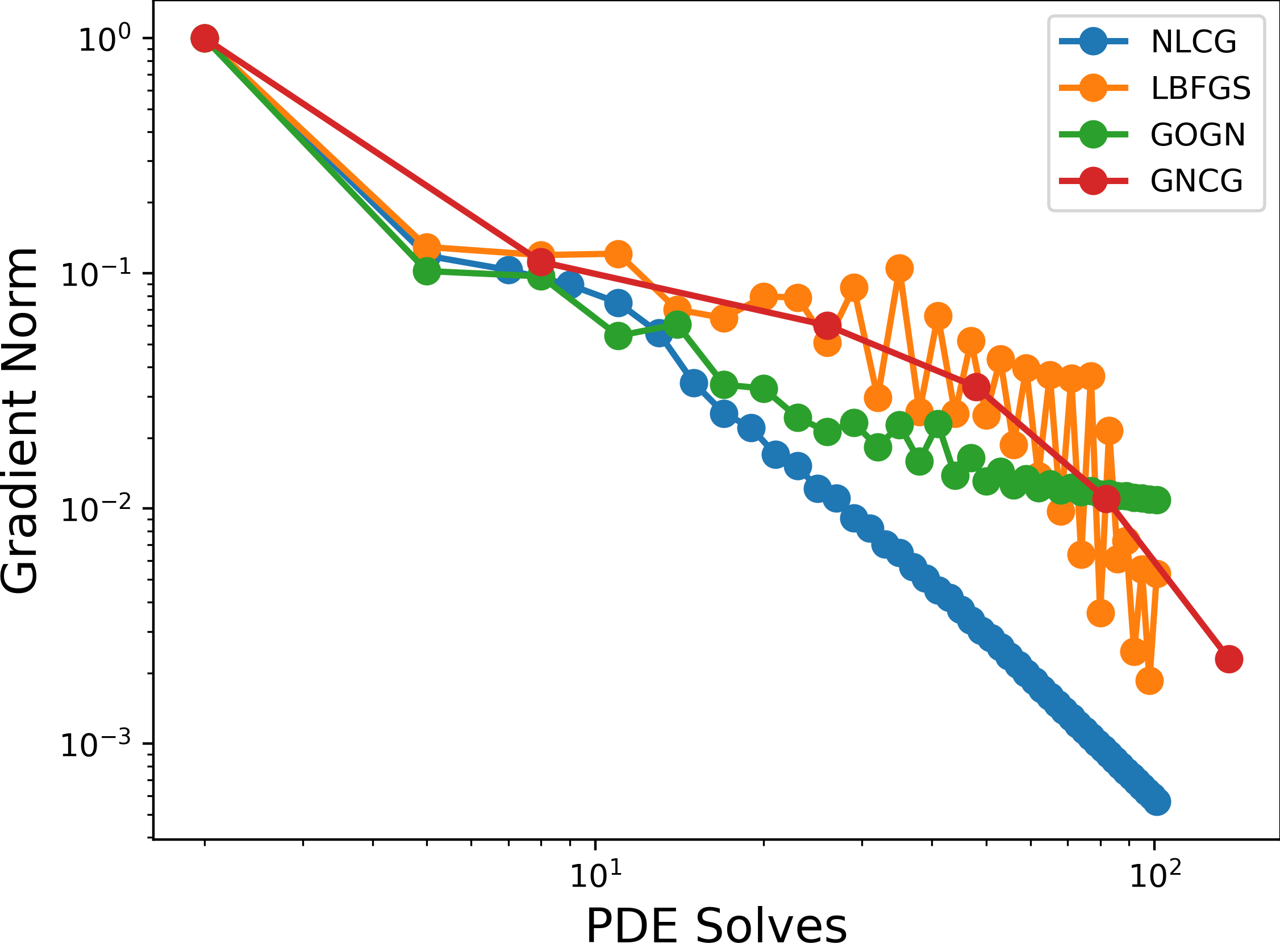}
        &
        \includegraphics[width=0.3\linewidth]{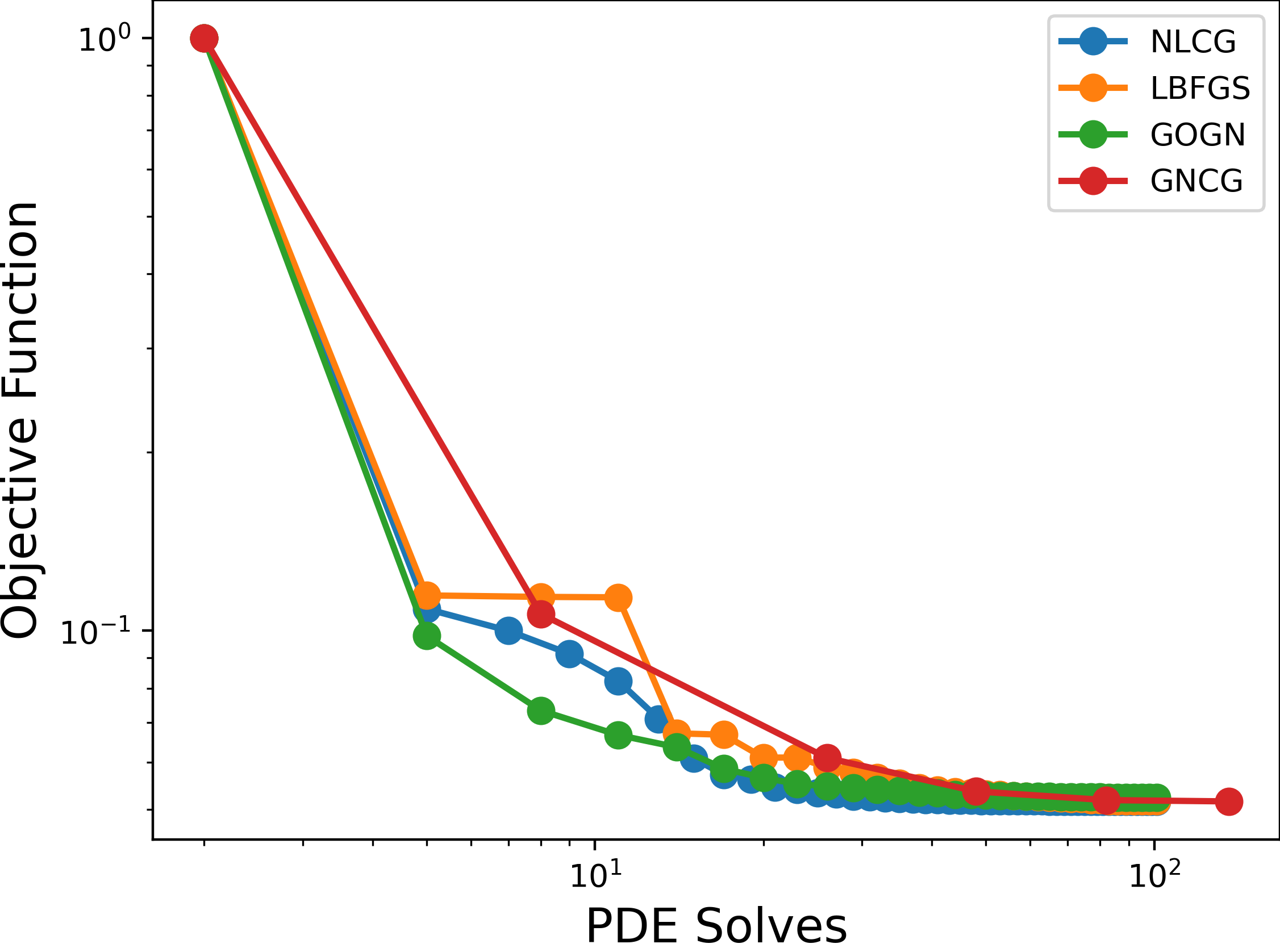}   
        \\
        \multicolumn{3}{c}{Realistic Coverage, $\sigma = 0.1$ }
        \\
        \includegraphics[width=0.3\linewidth]{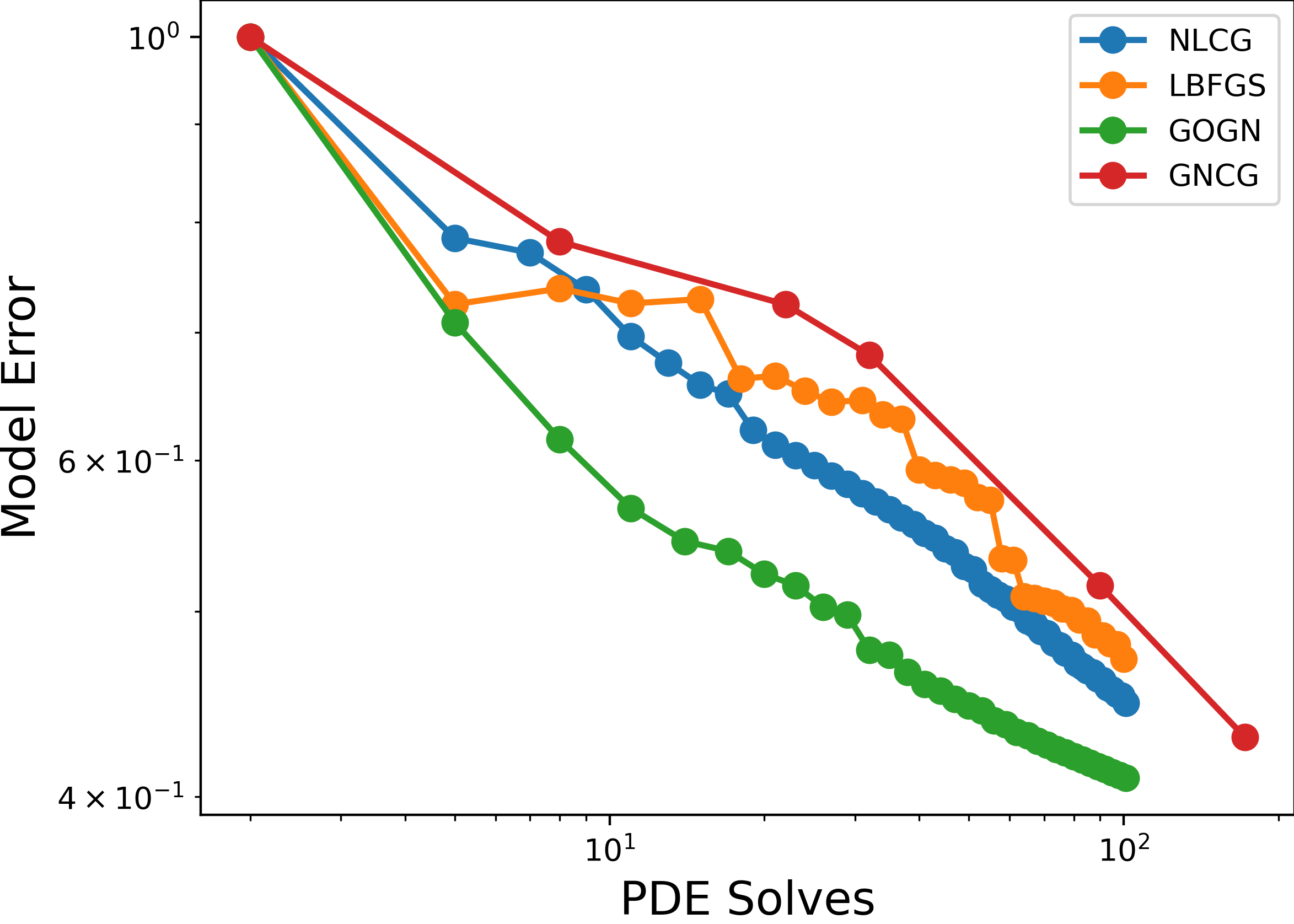}
        &
        \includegraphics[width=0.3\linewidth]{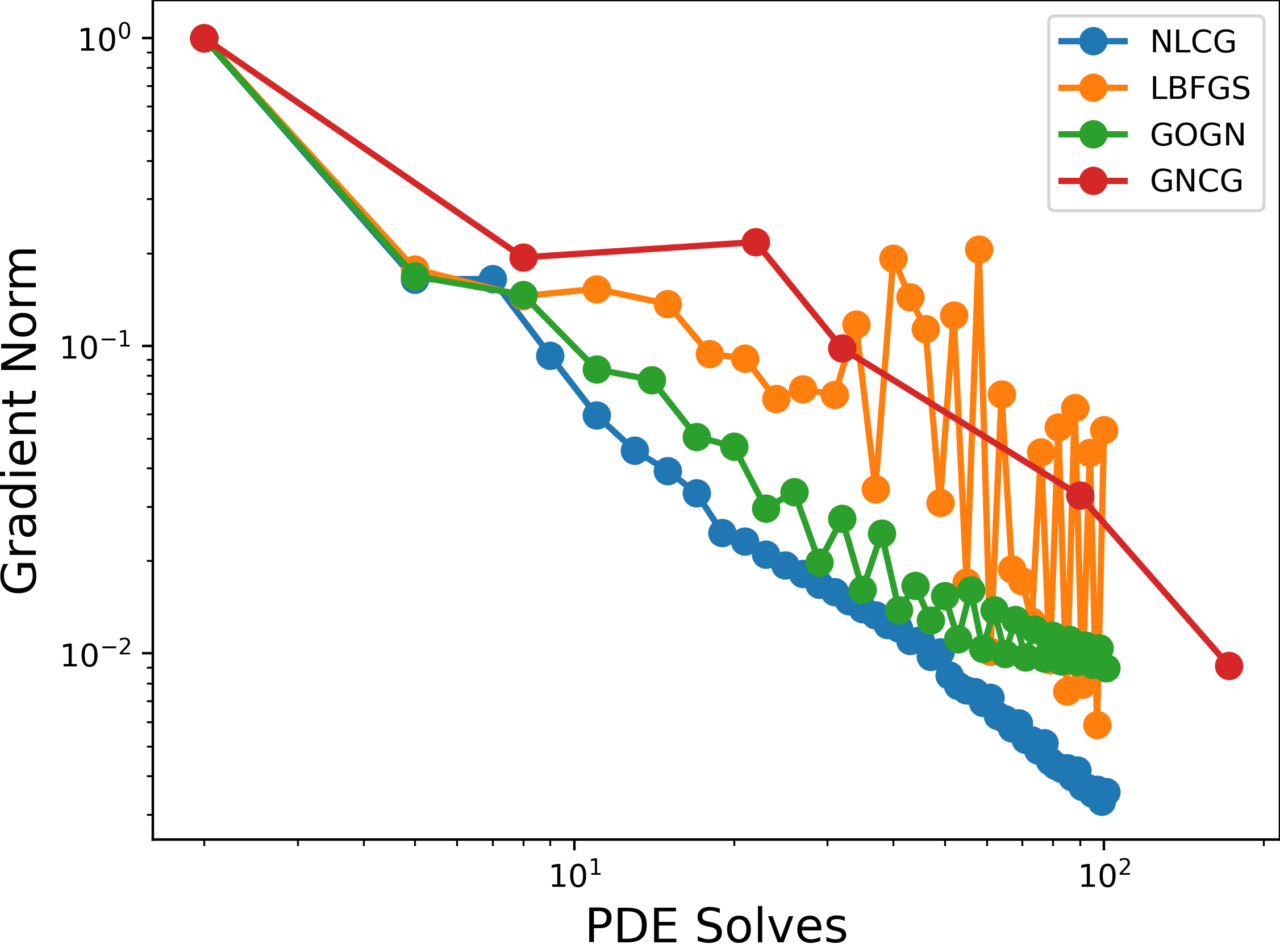}
        &
        \includegraphics[width=0.3\linewidth]{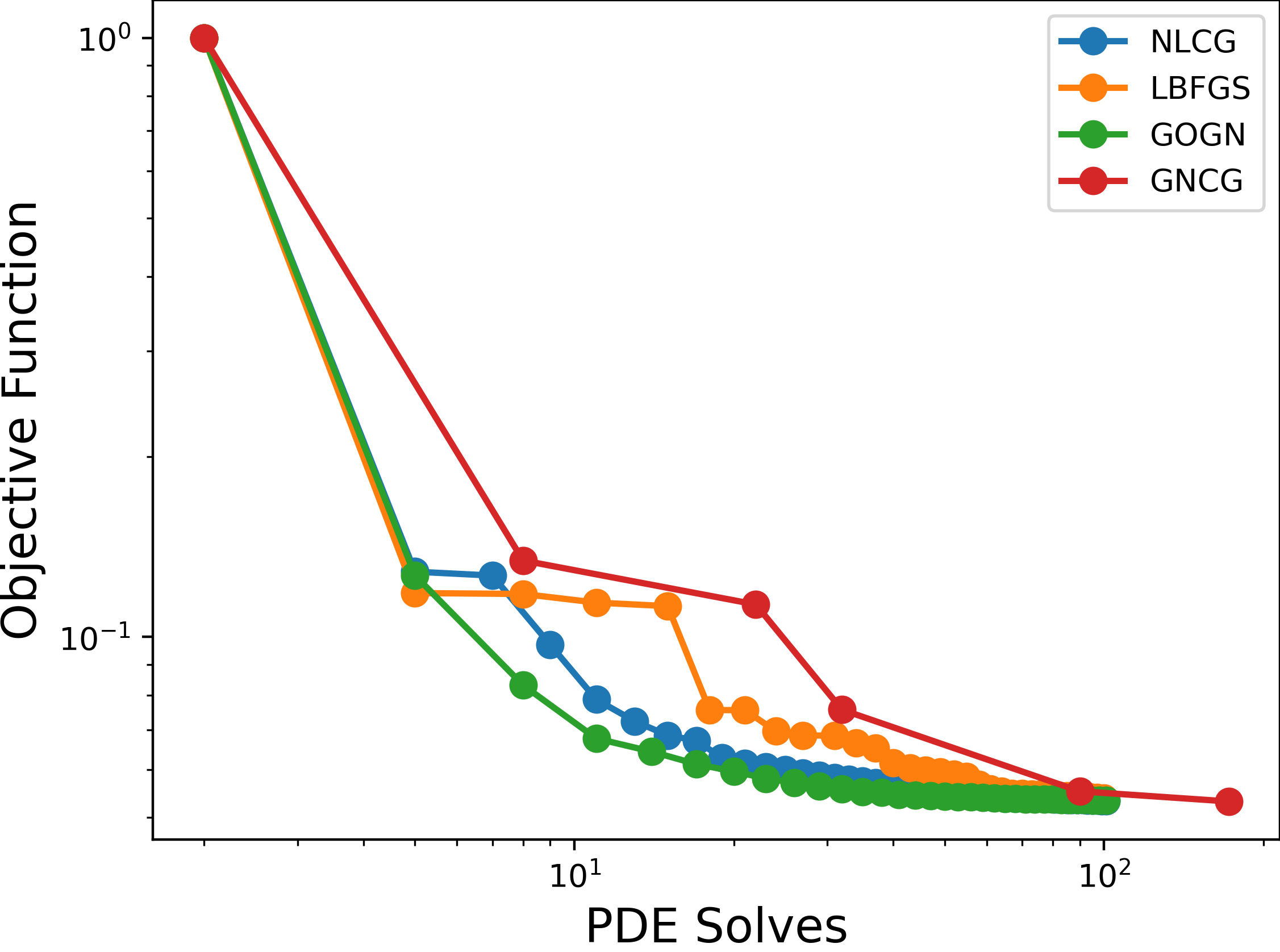}       
    \end{tabular} 
    \caption{Convergence plots for uniformly distributed receiver coverage (top row) and realistic coverage (bottom row), with X-axes across all images denoting number of PDE solves during optimization, and Y-axes denoting model error (left), gradient norm (middle), and objective function values (right) for $25$ sources at a noise level of $\sigma = 0.1$}
    \label{fig:convergence_plots_25sources}
\end{figure}

\begin{figure}[t]
    \centering
\includegraphics[width=0.4\linewidth]{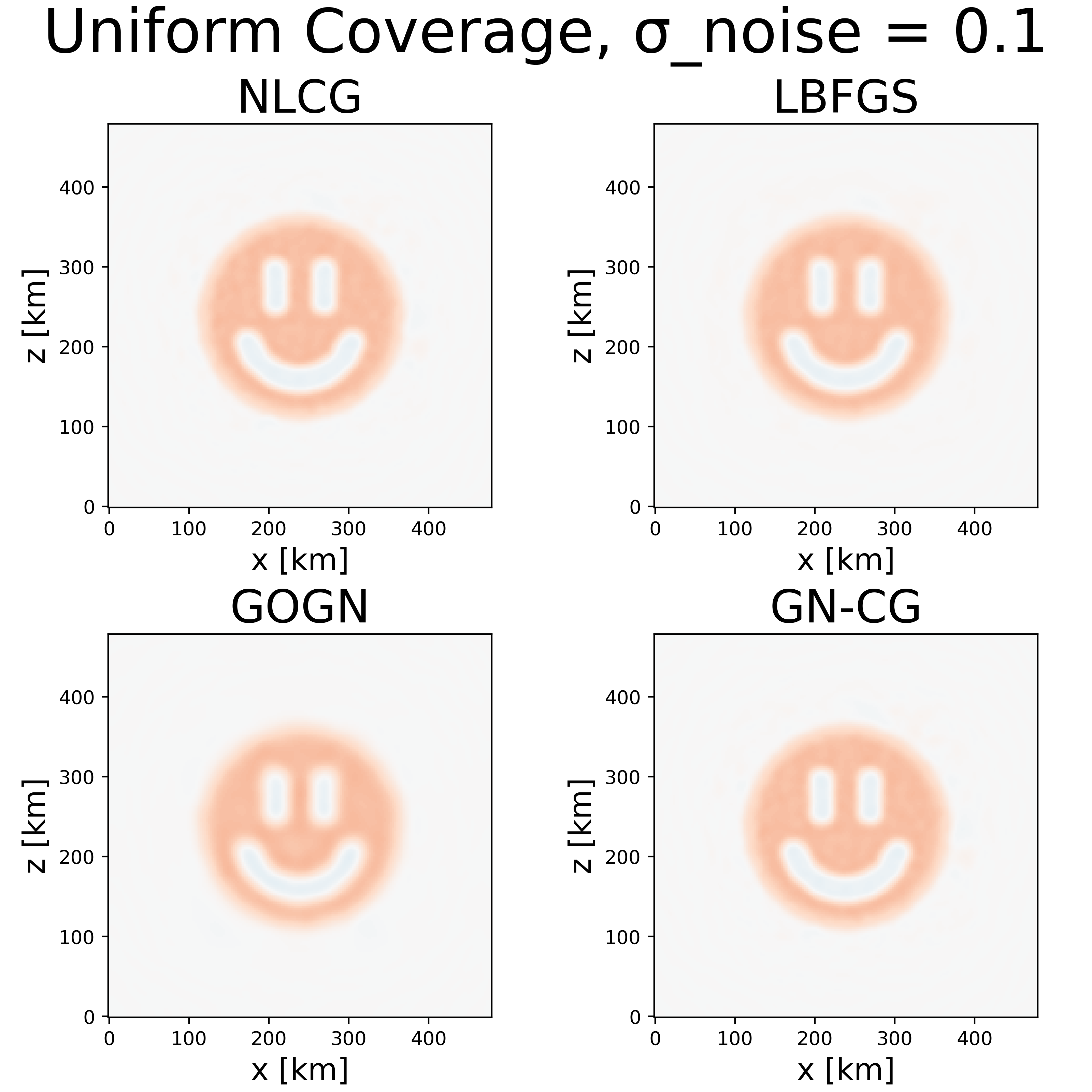}   
\includegraphics[width=0.4\linewidth]{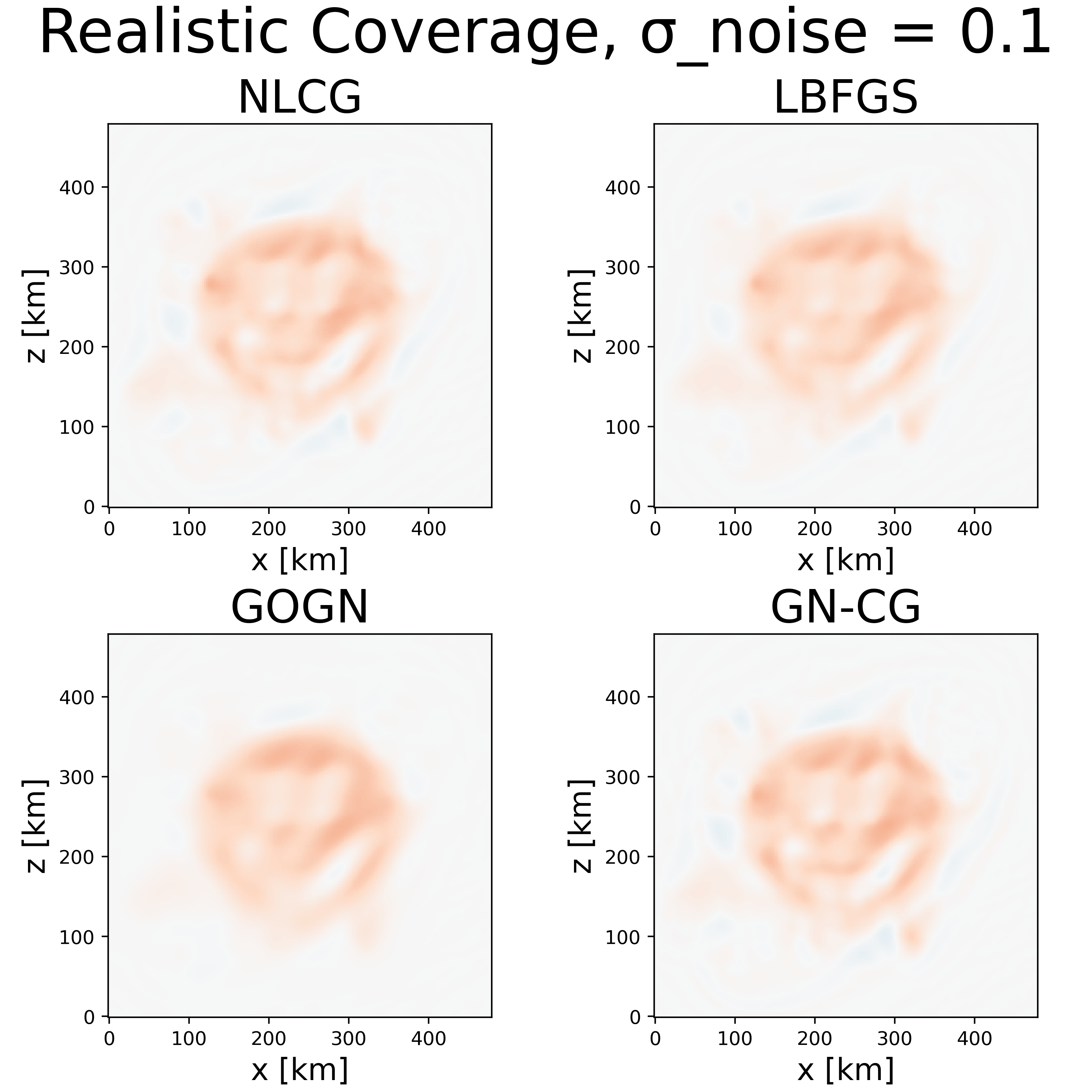}   
\includegraphics[width=.102\linewidth, trim=0cm 4cm 0cm 0cm, clip]{figures/cbar.png}    
    \caption{Final reconstructions from the experiments in Fig.~\ref{fig:convergence_plots_25sources} }
    \label{fig:reconstructions_25sources}
\end{figure}

\subsection{Results and Discussion}
\label{subsec:results}
In Figure \ref{fig:convergence_plots_5sources}, we show convergence plots for model error $\| \bm_{\rm true} - \bm_k \|$ (left column), gradient norm (middle column), and objective function values (right column). These are shown for uniform receiver coverage with $8$ sources (top row) and realistic receiver coverage with $5$ sources (bottom row), and are all run with noise level $\sigma = 0.1$. To make a fair comparison, we plot these results over the number of PDE solves in the x-axis, which is the primary computational budget in large-scale PDE constrained optimization problems. 
In this plots, we observe that GOGN outperforms all other methods when using realistic receiver coverage but remains competitive for the uniformly distributed receiver coverage. The corresponding reconstructions are shown in Figure~\ref{fig:reconstructions_5sources}. Similar results are shown in the case of 25 sources as shown in Figure~\ref{fig:convergence_plots_25sources} for convergence and Figure~\ref{fig:reconstructions_25sources}. Additional experiments for different noise levels are provided in Appendix~\ref{app:additional_experiments}.

These results indicate that GOGN is a competitive algorithm, offering improved efficiency in terms of PDE solves. While its performance is comparable to existing methods under uniformly distributed receiver coverage, GOGN appears particularly promising for more realistic receiver configurations. The mechanisms underlying this behavior are not yet fully understood, but we suspect that this is connected to greater ill-conditioning of the matrix $\bI +
\bJ^{\rm GO} (\bD^\top \bD)^{-1} {\bJ^{\rm GO}}^\top,$ which corresponds with a more difficult optimization landscape whose ill-conditioning is likely better captured by the GOGN Hessian than by CG or LBFGS.

Because GOGN requires fewer PDE solves to achieve meaningful model improvements, a natural strategy is to employ GOGN during the early stages of inversion to take advantage of its rapid initial convergence, and then transition to CG or GNCG for a small number of final iterations (potentially only one iteration of GNCG) to refine the inversion result, exploiting their superior long-term convergence properties. We expect this strategy may be especially useful when the source–receiver geometry is more favorable. This strategy is motivated by the observation that the effectiveness of the GOGN update depends on the rank and conditioning of the matrix $\tilde{J}$. When $\tilde{J}$ is severely rank-deficient, the amount of informative curvature captured by GOGN is limited; in the extreme case where $\tilde{J}$ has rank one, the method effectively reduces to standard gradient descent. 

It is also worth considering the comparison with LBFGS. Unlike GOGN, LBFGS does not require the solution of an $N \times N$ system to compute an update, but instead directly computes an update using dot products with vectors based on the update history of the optimization. Furthermore for LBFGS, only one vector, the model update, must be smoothed by the application of $\bD^\top \bD,$ instead of the $N$ separate gradients which must be smoothed to apply GOGN. However, in taking on these small additional costs to implement GOGN instead of LBFGS, we achieve better iterations early in the optimization, and a reconstruction that is more robust to observational noise. The former benefit is easily explained- LBFGS requires an update history to build its best approximation to the inverse Hessian, while GOGN approximates the Hessian using only information from the current iteration. In this sense, GOGN has a ``head-start'' in comparison with LBFGS. The latter observation is more difficult to explain, and further research is required to see if this robustness to observational noise is problem or implementation-dependent.

\section{Conclusion}

We proposed GOGN, a computationally efficient Gauss--Newton–type method for large-scale, regularized PDE-constrained optimization. By rewriting a sum of objectives as a sum of squares, we obtain a structured approximation to the Hessian that can be assembled from gradients of the individual objectives. When these objectives correspond to distinct physical states, this approximation incurs no additional PDE solves beyond those required to compute the gradient of the full objective.

Through numerical experiments in full-waveform inversion, we have shown that GOGN is a competitive optimization method, particularly in terms of efficiency measured by PDE solves. While its performance is comparable to standard approaches such as CG and GNCG under idealized, uniformly distributed receiver coverage, GOGN shows particular promise for more realistic receiver coverage used in regional and global scale FWI. The effectiveness of GOGN is influenced by the rank and conditioning of the approximate Jacobian matrix, which governs how much curvature information can be captured. In regimes where this matrix is severely rank-deficient, the method naturally degenerates toward gradient descent, limiting the attainable model improvement.

These observations suggest a practical hybrid strategy in which GOGN is used during the early stages of inversion to exploit its rapid initial convergence, followed by a transition to CG or GNCG to leverage their superior long-term convergence properties when the source–receiver configuration is more favorable. Understanding the precise relationship between acquisition geometry, the structure of approximate Jacobian matrix, and the performance of GOGN remains an important direction for future work.

\section*{Acknowledgments}
This study is supported by the National Science Foundation projects with grant numbers EAR-1945565 and OAC-2103621. 


%
%

\bibliographystyle{spmpsci}      
\bibliography{thesis}   


\appendix
\section{Appendix}
\subsection{Additional Experimental Details} \label{app:additional_experimental_details}
We employ a receiver-weighting approach \cite{Ruan2019}, where the observations are weighted based on source-receiver geometry. Rewriting the unweighted data misfit $\Phi ( \bm ) = \frac 12 \| \br_i ( \bm ) \|^2$ as a sum over misfits of individual seismograms
$$
\Phi ( \bm ) = \frac 12 
\sum_{i=1}^{N} \sum_{j=1}^{n_r} 
\| \bs_{ij} ( \bm ) - \bs_{ij}^{\rm obs} \|^2,
$$
where $N$ is the number of sources, $n_r$ is the number of receivers, $\bs_{ij}^{\rm obs}$ is the recorded seismogram for this source-receiver pair, and $\bs_{ij} ( \bm )$ is the corresponding simulated seismogram, we can consider a weighted data misfit
$$
\Phi ( \bm ) = \frac 12 
\sum_{i=1}^{N} \sum_{j=1}^{n_r} w_{ij}^2 
\| \bs_{ij} ( \bm ) - \bs_{ij}^{\rm obs} \|^2.
$$
In particular, the observation associated with the $i$th source and $j$th receiver is weighted by the factor
$$
w_{ij} = \frac{1}{\| \bs_{ij}^{\rm obs} \| \sqrt{n_r^{-1} \sum_{\ell=1}^{n_r} k ( \| \bx^r_i - \bx^r_\ell \| ) }},
$$
where $\bx_i^r$ is the $i$th receiver location, and $k$ is a Gaussian function
$$
k ( r ) = 
\frac{1}{(2\pi)^{1/2}\sigma_k} \re^{-{r^2}/{2 \sigma_k^2}},
$$
with chosen value  $\sigma_k = 100$. Dividing by the norm of the observed waveform $\| \bs^{\rm obs}_{ij} \|$ adjusts for the decay of amplitudes as waveforms travel away from the source, while dividing by the factor $\sqrt{n_r^{-1} \sum_{\ell=1}^{n_r} k ( \| \bx^r_i - \bx^r_\ell \| ) }$ adjusts for receiver density. Without the former weighting factor then, measurements with short distances from the source to receiver are weighted more heavily in the gradient, and without the latter, stronger updates will be made in regions with more receivers, leading to spatially imbalanced updates.

%
%



%

\subsection{Additional Experiments}
\label{app:additional_experiments}

We summarize the results of  a few additional experiments to show the impact of changing the noise level. We consider the same four source-receiver configurations as in Figure \ref{fig:source_setup}, which showcase realistic and uniformly distributed coverage with different numbers of sources. In addition to the noise level $\sigma=0.1$ consider in previously, we consider the two more noise levels $\sigma = 0.01$ and $\sigma = 1.0,$ and compare the performance of each algorithm.

(Relative) optimization statistics as a function of the number of PDE solves are displayed in Figures \ref{fig:optstats_simple2}, \ref{fig:optstats_simple252}, \ref{fig:optstats_simple0}, and \ref{fig:optstats_simple250}, while the final reconstructions are displayed in  Figures \ref{fig:rec_simple2}, \ref{fig:rec_simple252}, \ref{fig:rec_simple0}, and \ref{fig:rec_simple250}. The general trend in these experiments is that, for the low noise level $\sigma = 0.01$, a similar reconstruction is reached by each algorithm, with GOGN outperforming or matching the performance of other algorithms after a small number of simulations in terms of model error and objective function value. These advantages are clear with realistic data coverage, but more subtle with uniform coverage. When the noise level is increased to $\sigma = 1.0$, GOGN's advantages at optimizing the objective function disappear, however, it still generally outperforms other algorithms in terms of model error, and it can be visually confirmed that the reproductions are less corrupted by the noise. This evidence suggests that GOGN is more robust to the difficulties of realistic source-receiver configurations and observational error than CG, LBFGS, and GN-CG. 


\FloatBarrier

\begin{figure}[t]
    \centering
    Uniform Coverage, $\sigma = 0.01$ \newline 
    \includegraphics[width=0.3\linewidth]{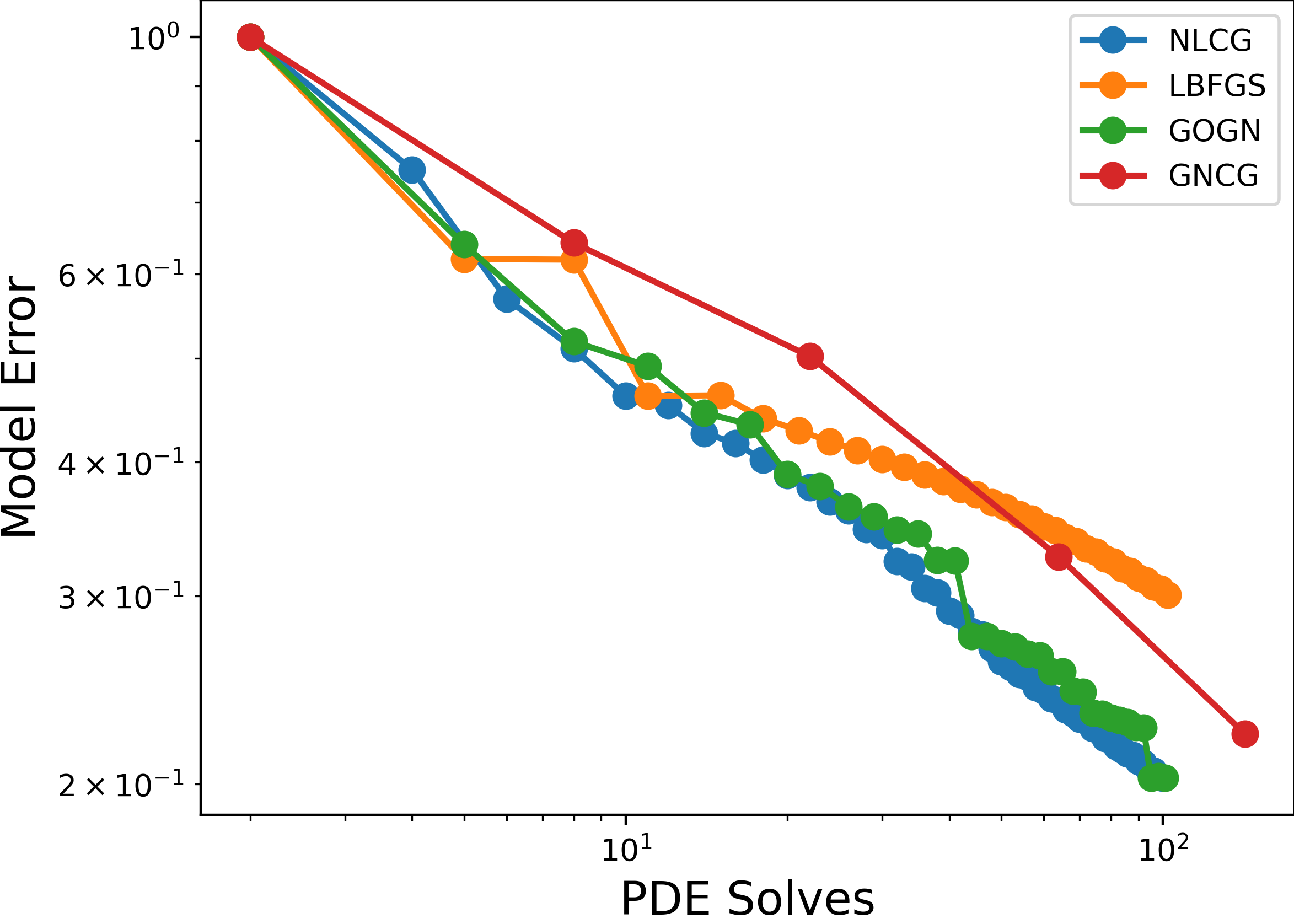}
    \includegraphics[width=0.3\linewidth]{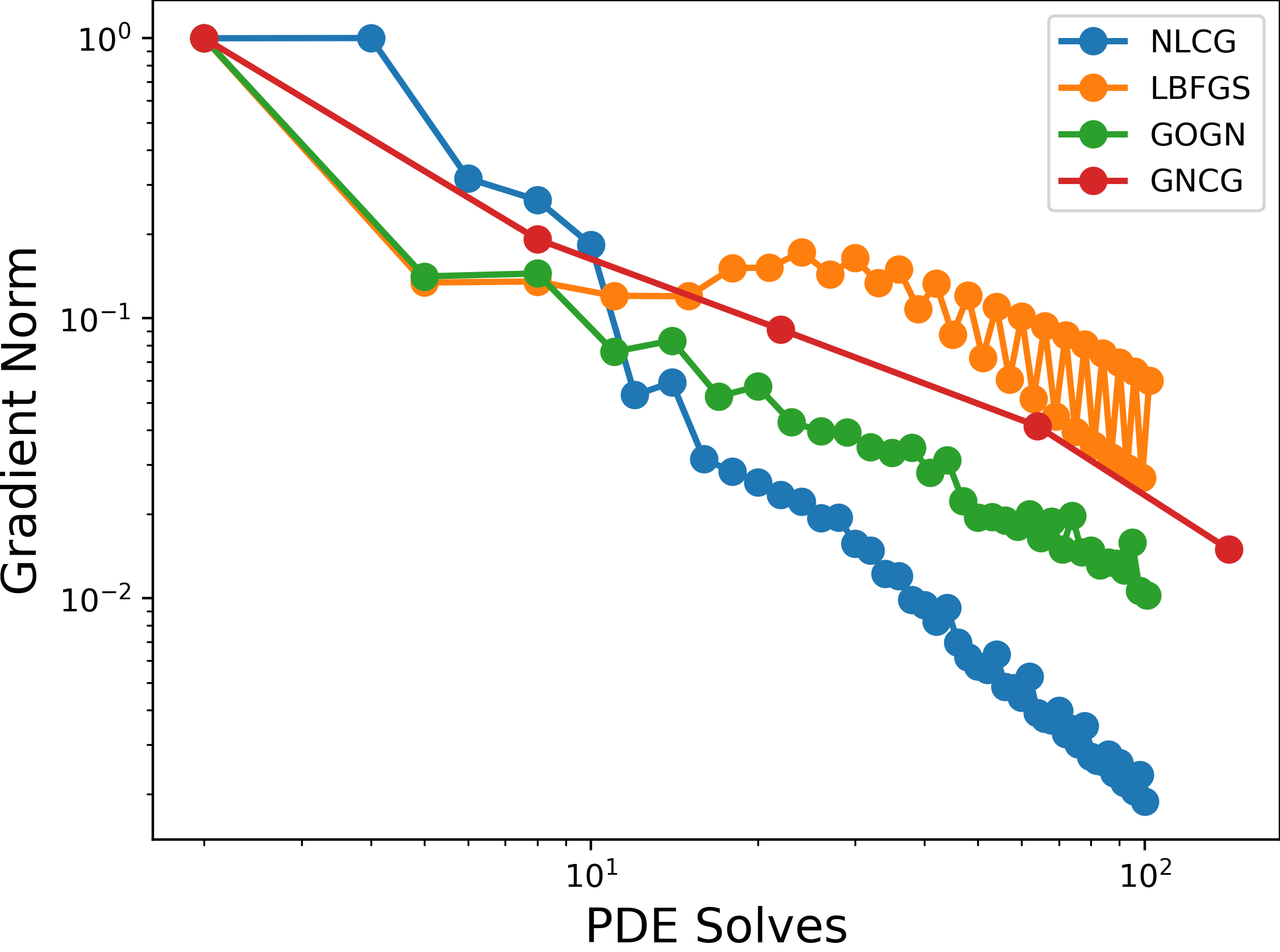}
    \includegraphics[width=0.3\linewidth]{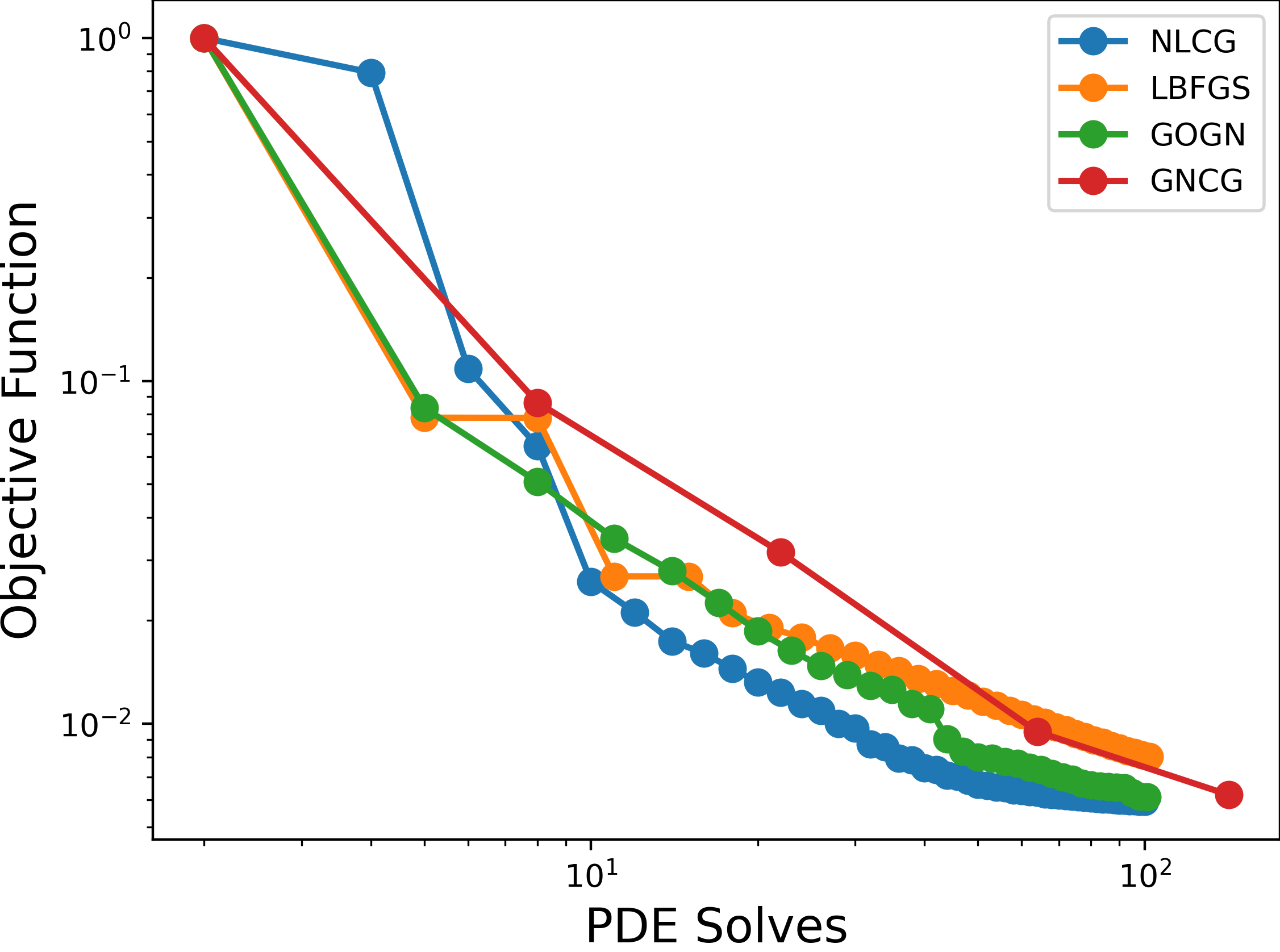}   
    \newline Realistic Coverage, $\sigma = 0.01$ \newline 
    \includegraphics[width=0.3\linewidth]{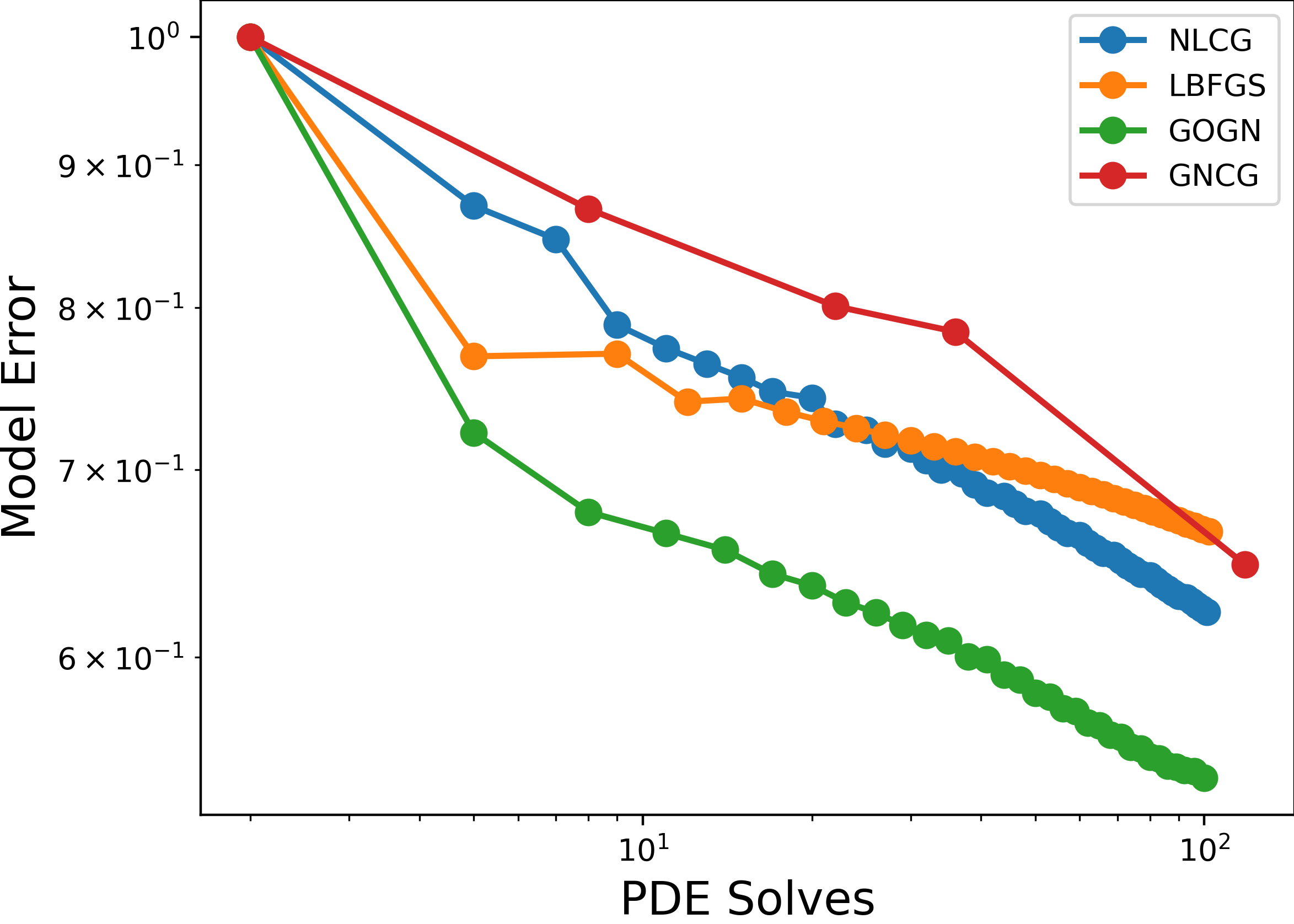}
    \includegraphics[width=0.3\linewidth]{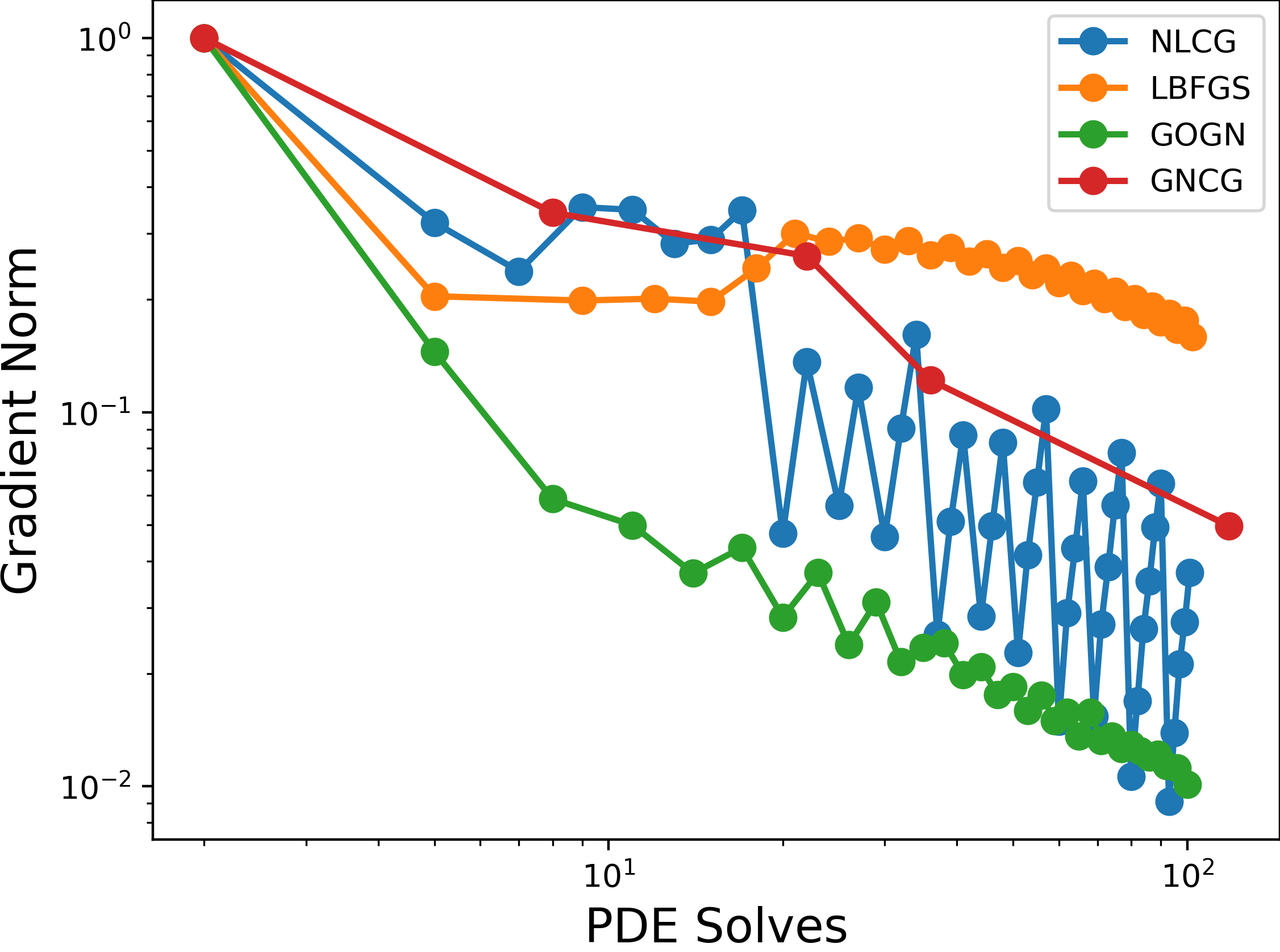}
    \includegraphics[width=0.3\linewidth]{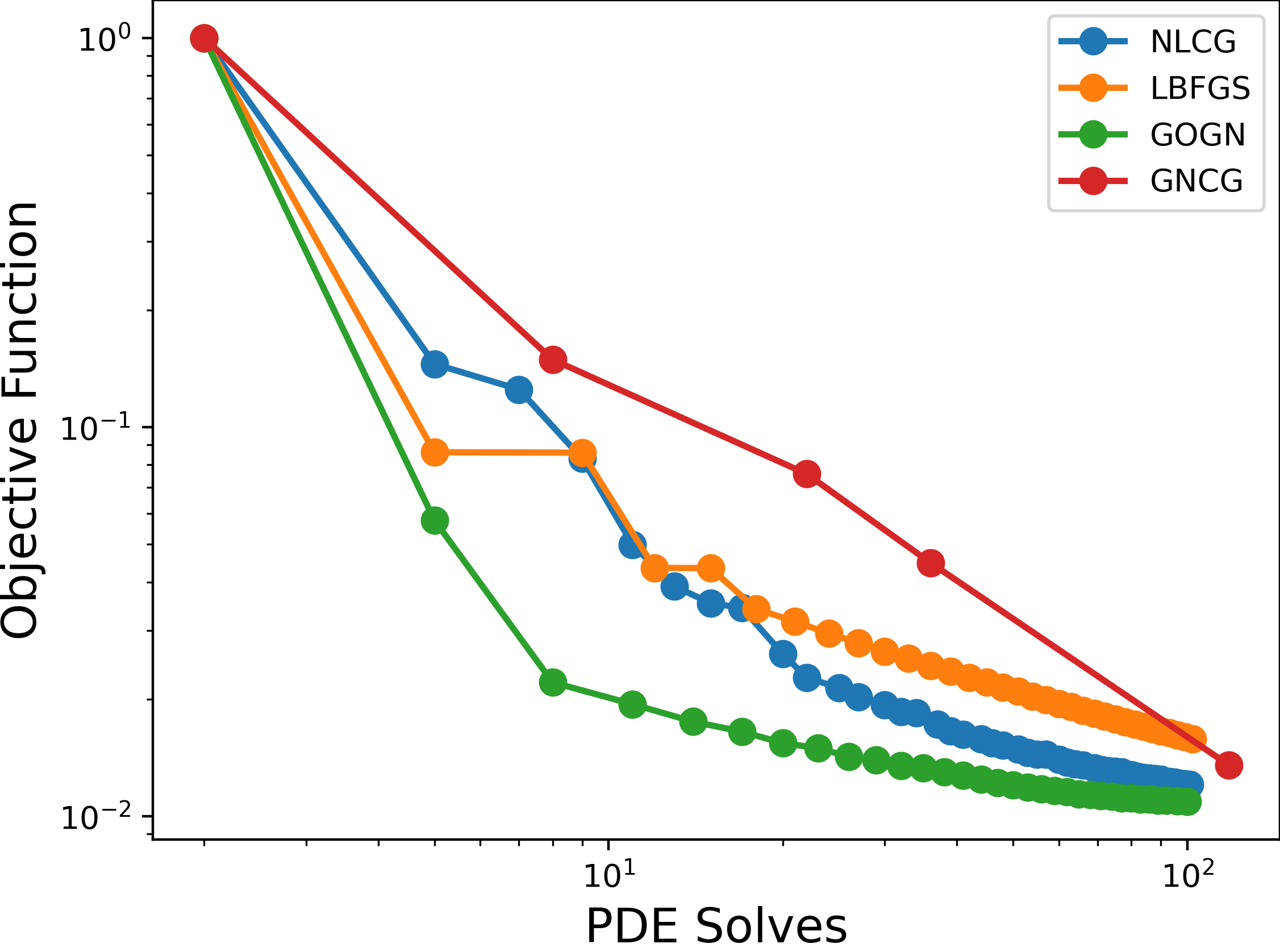}      
    \caption{Convergence plots for uniformly distributed receiver coverage (top row) and realistic coverage (bottom row), with X-axes across all images denoting number of PDE solves during optimization and Y-axes denoting model error (left), gradient norm (middle), and objective function values (right) for $8$ and $5$ sources, respectively, at a noise level of $\sigma = 0.01$}
    \label{fig:optstats_simple2}
\end{figure}

\begin{figure}[t]
    \centering
\includegraphics[width=0.4\linewidth]{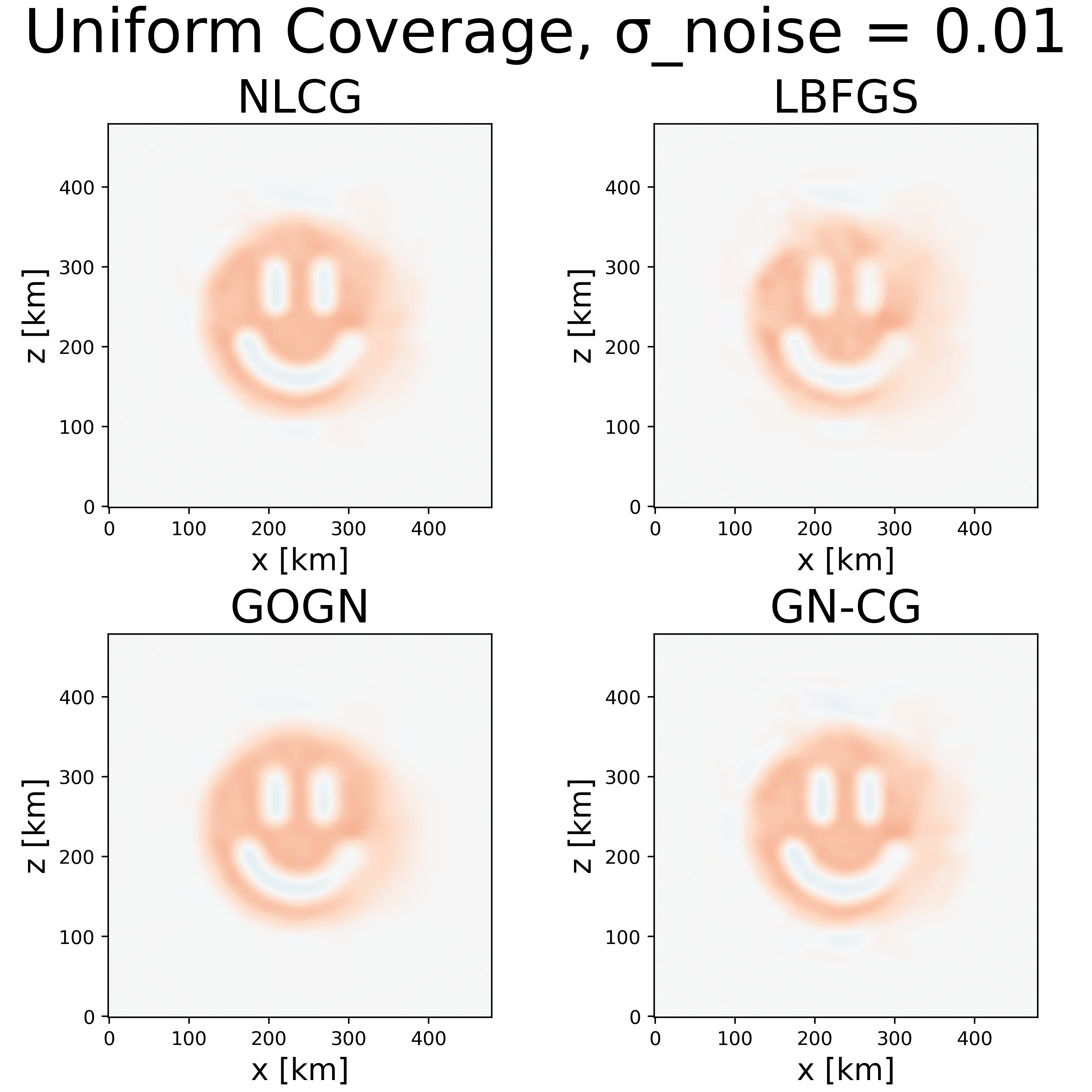}   
\includegraphics[width=0.4\linewidth]{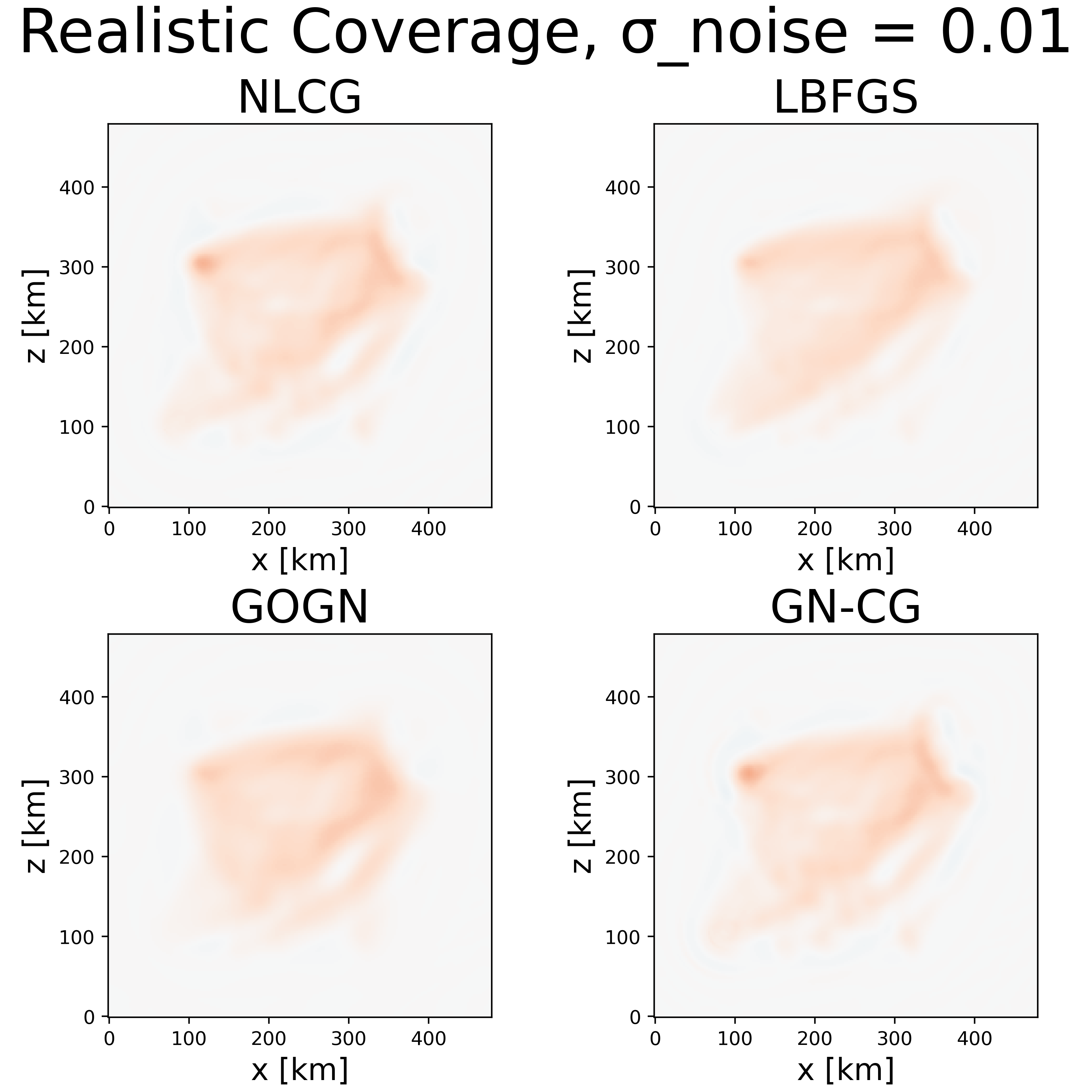}   
\includegraphics[width=.102\linewidth, trim=0cm 4cm 0cm 0cm, clip]{figures/cbar.png}    
    \caption{Final reconstructions from the experiments in Figure~\ref{fig:optstats_simple2} }
    \label{fig:rec_simple2}
\end{figure}

\begin{figure}[t]
    \centering
    Uniform Coverage, $\sigma = 0.01$ \newline 
    \includegraphics[width=0.3\linewidth]{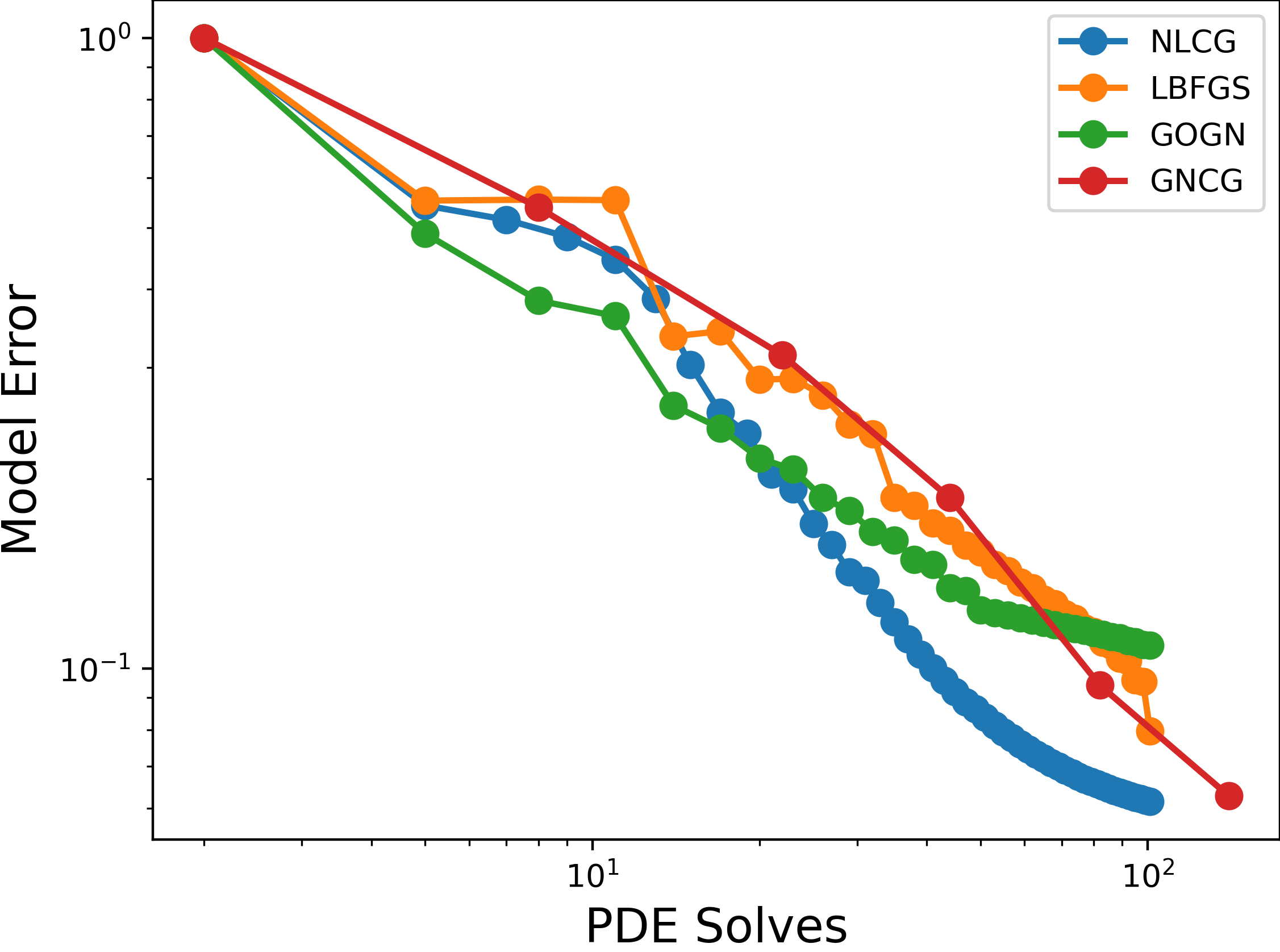}
    \includegraphics[width=0.3\linewidth]{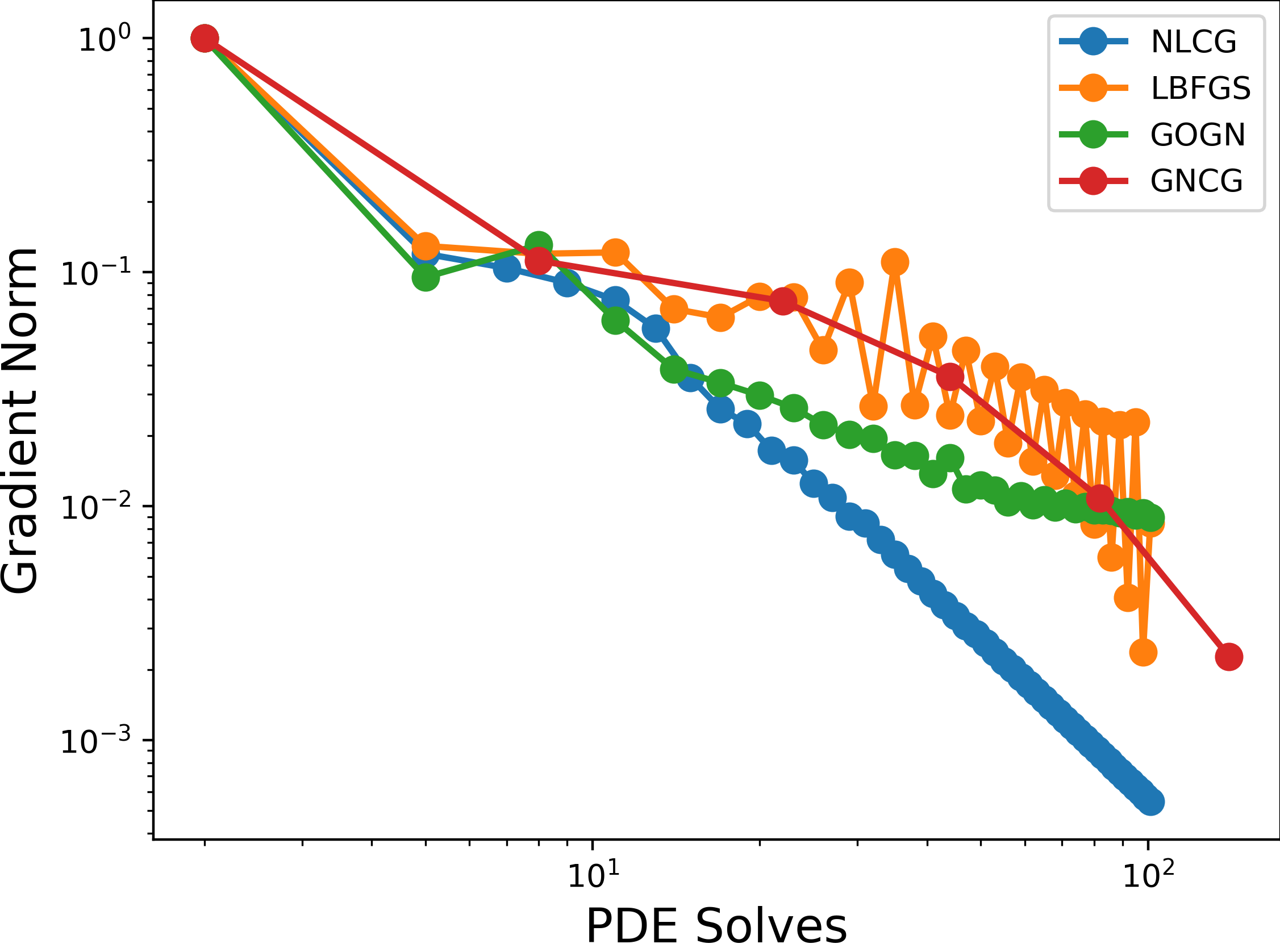}
    \includegraphics[width=0.3\linewidth]{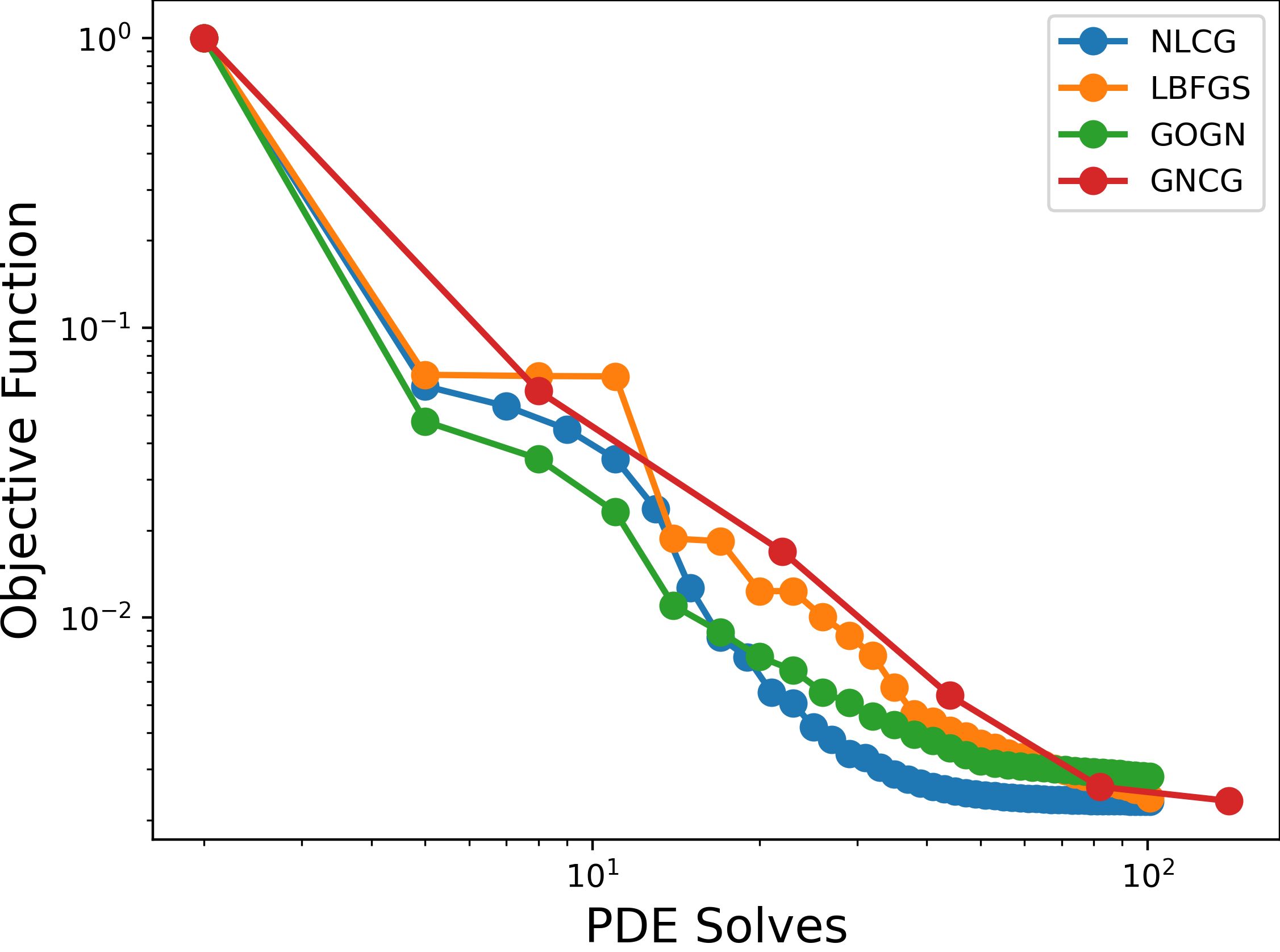}   
    \newline Realistic Coverage, $\sigma = 0.01$ \newline 
    \includegraphics[width=0.3\linewidth]{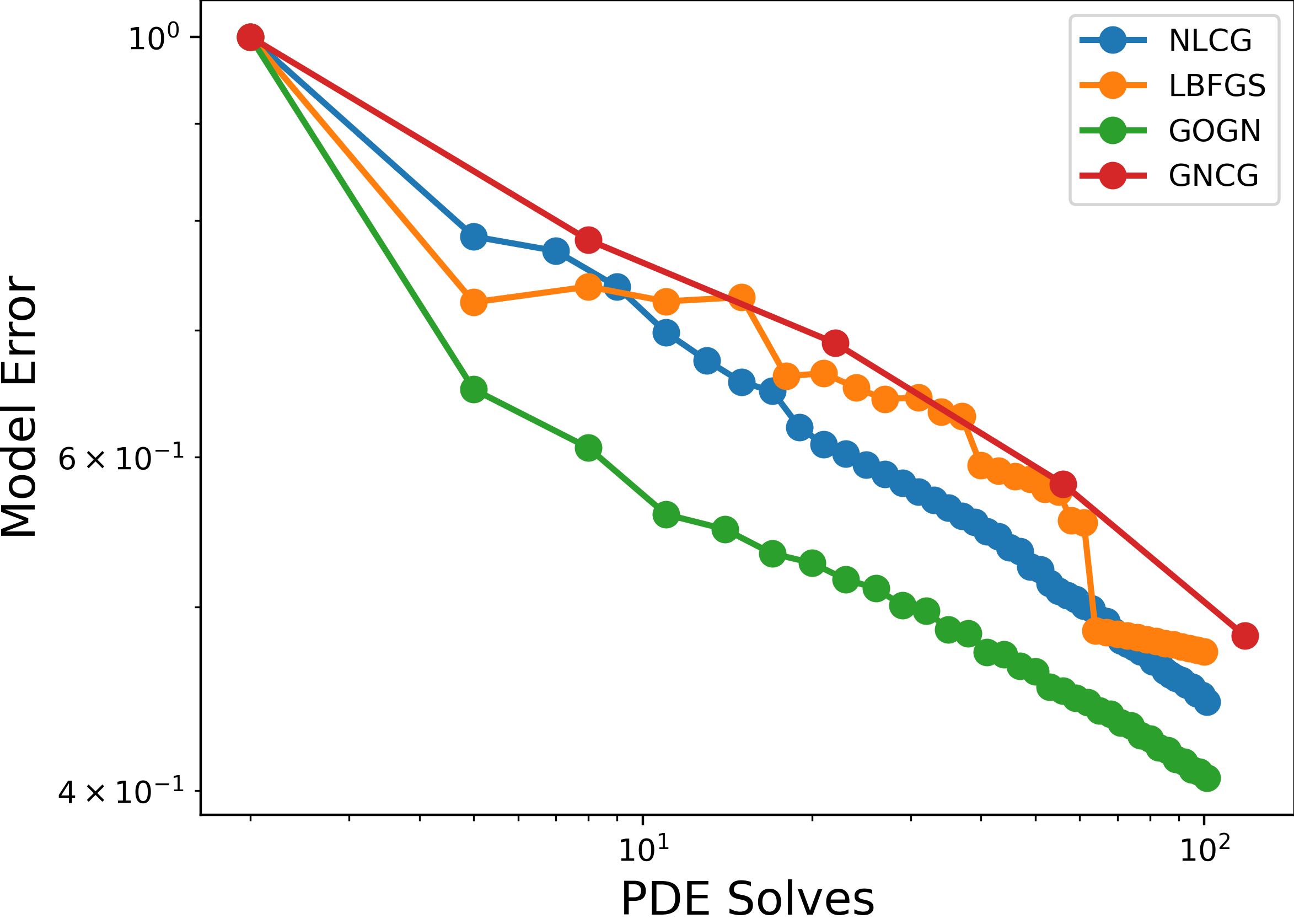}
    \includegraphics[width=0.3\linewidth]{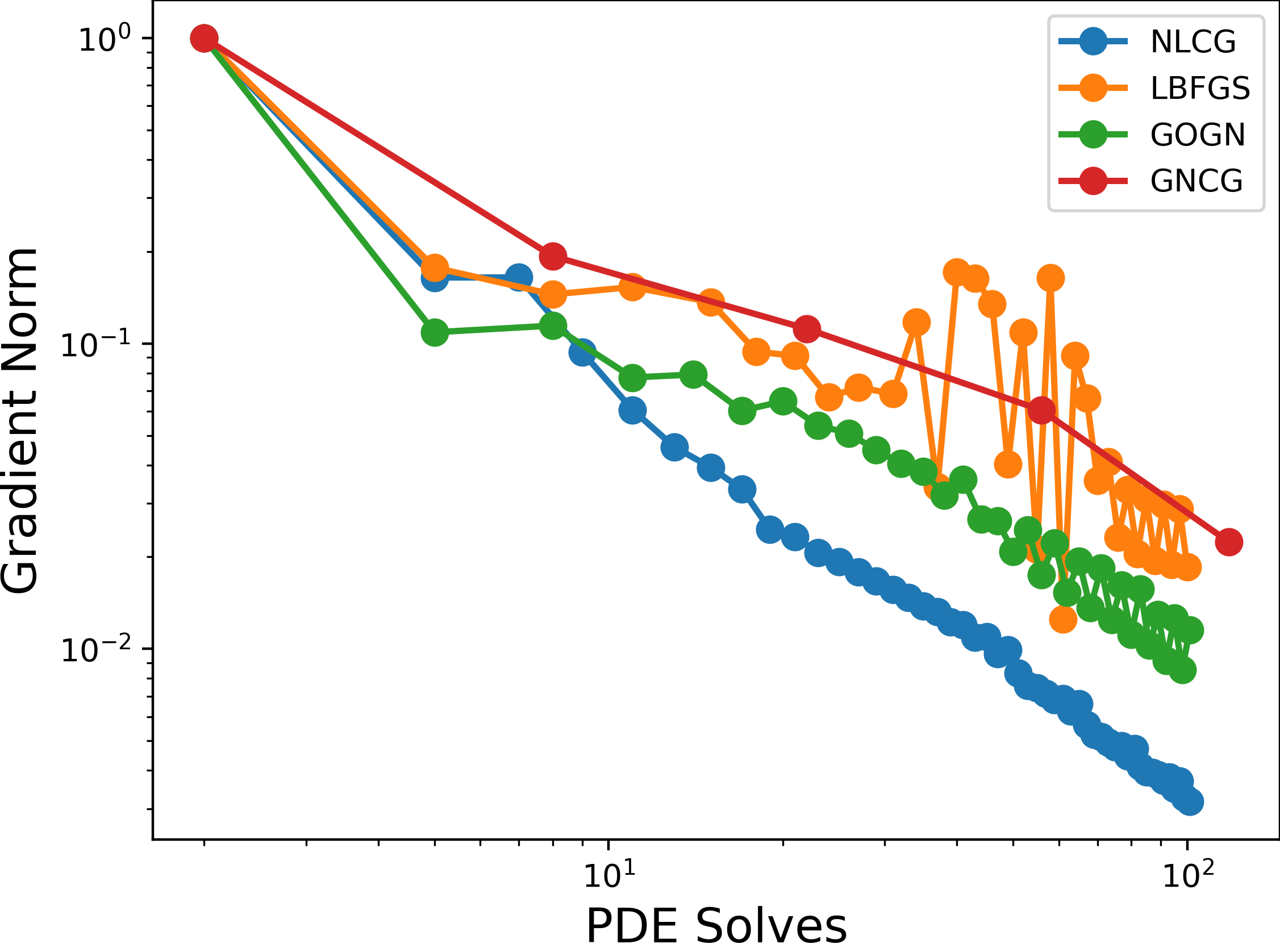}
    \includegraphics[width=0.3\linewidth]{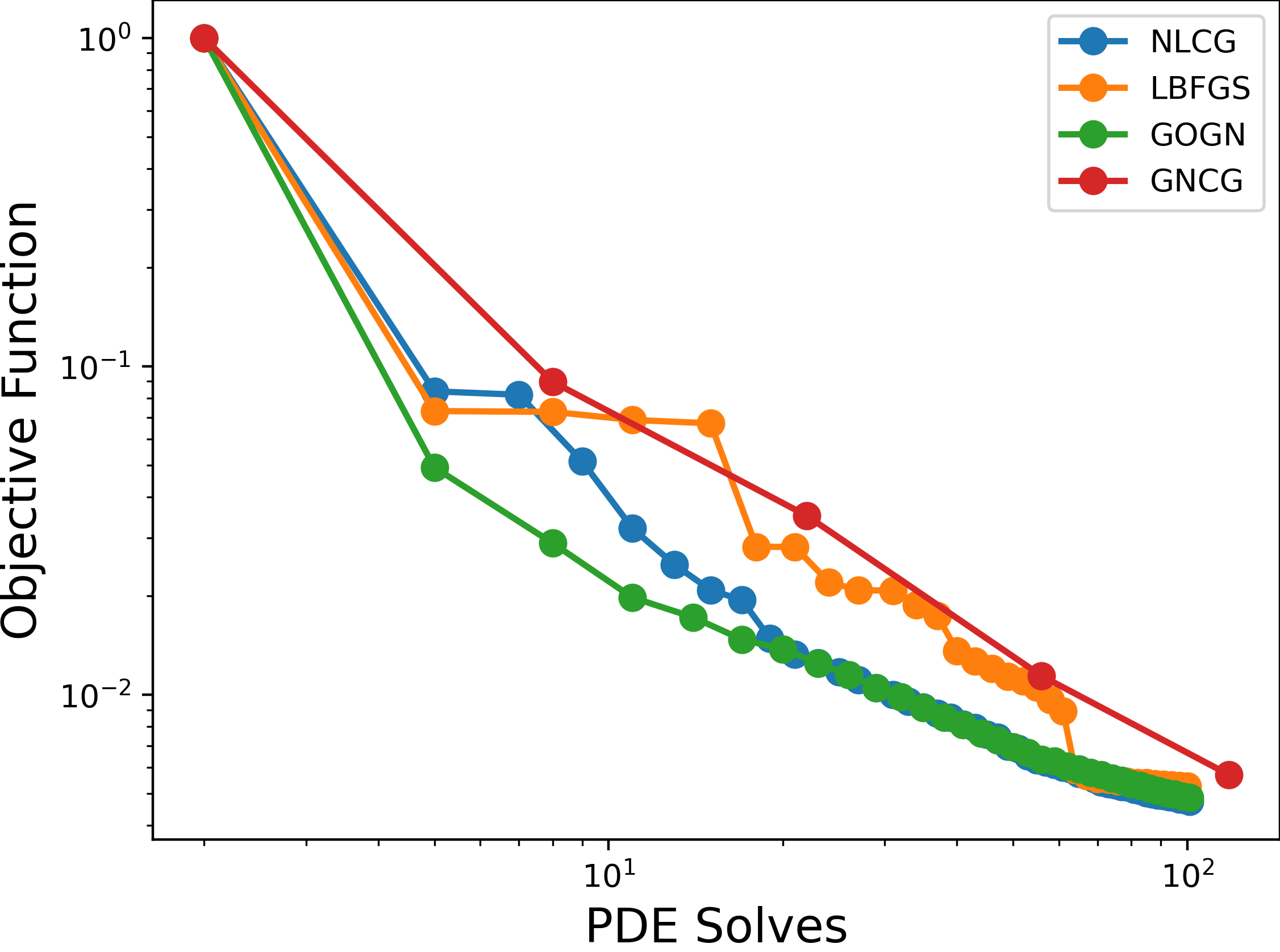}       
    \caption{Convergence plots for uniformly distributed receiver coverage (top row) and realistic coverage (bottom row), with X-axes across all images denoting number of PDE solves during optimization, and Y-axes denote model error (left), gradient norm (middle), and objective function values (right) for $25$ sources at a noise level of $\sigma = 0.01$}
    \label{fig:optstats_simple252}
\end{figure}

\begin{figure}[t]
    \centering
\includegraphics[width=0.4\linewidth]{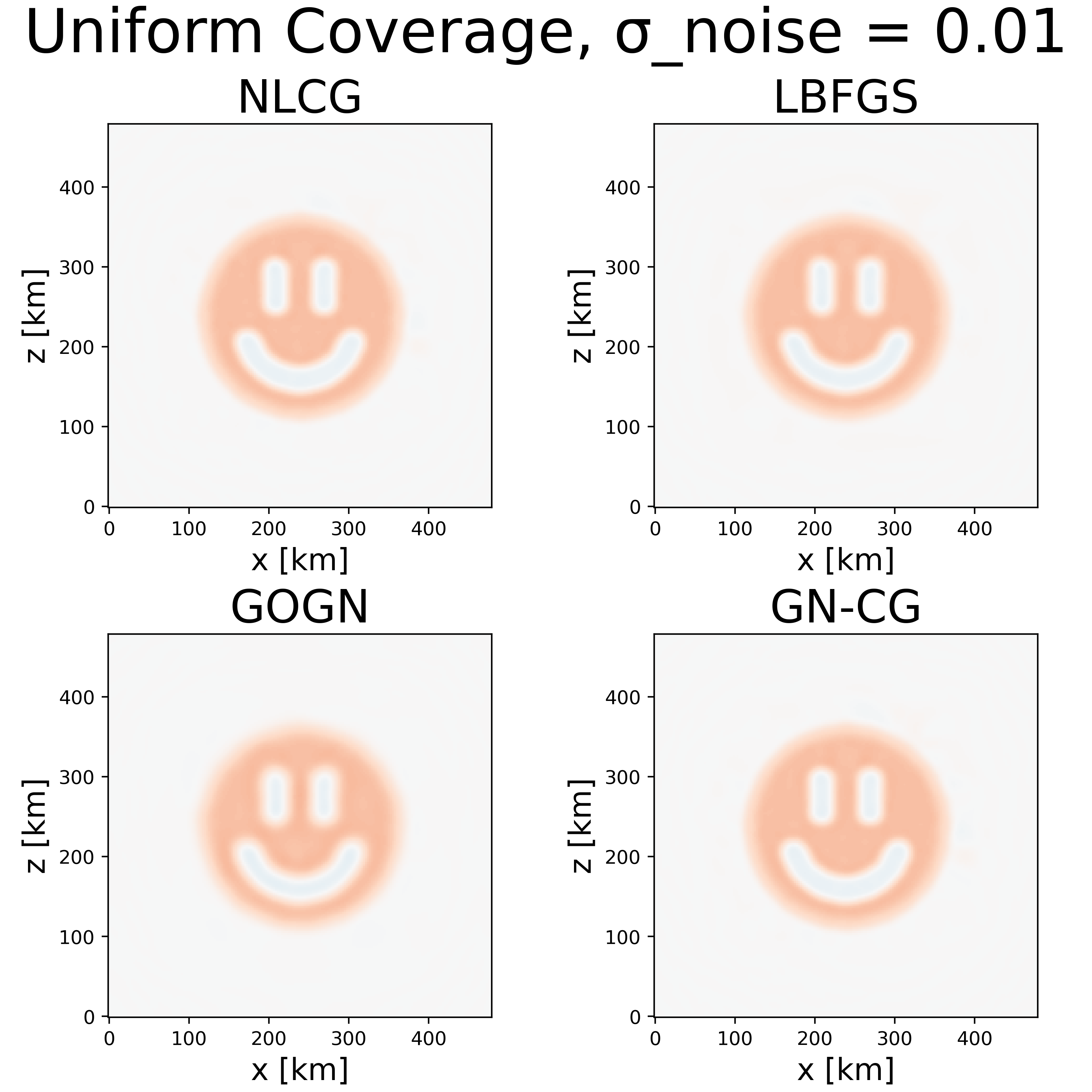}   
\includegraphics[width=0.4\linewidth]{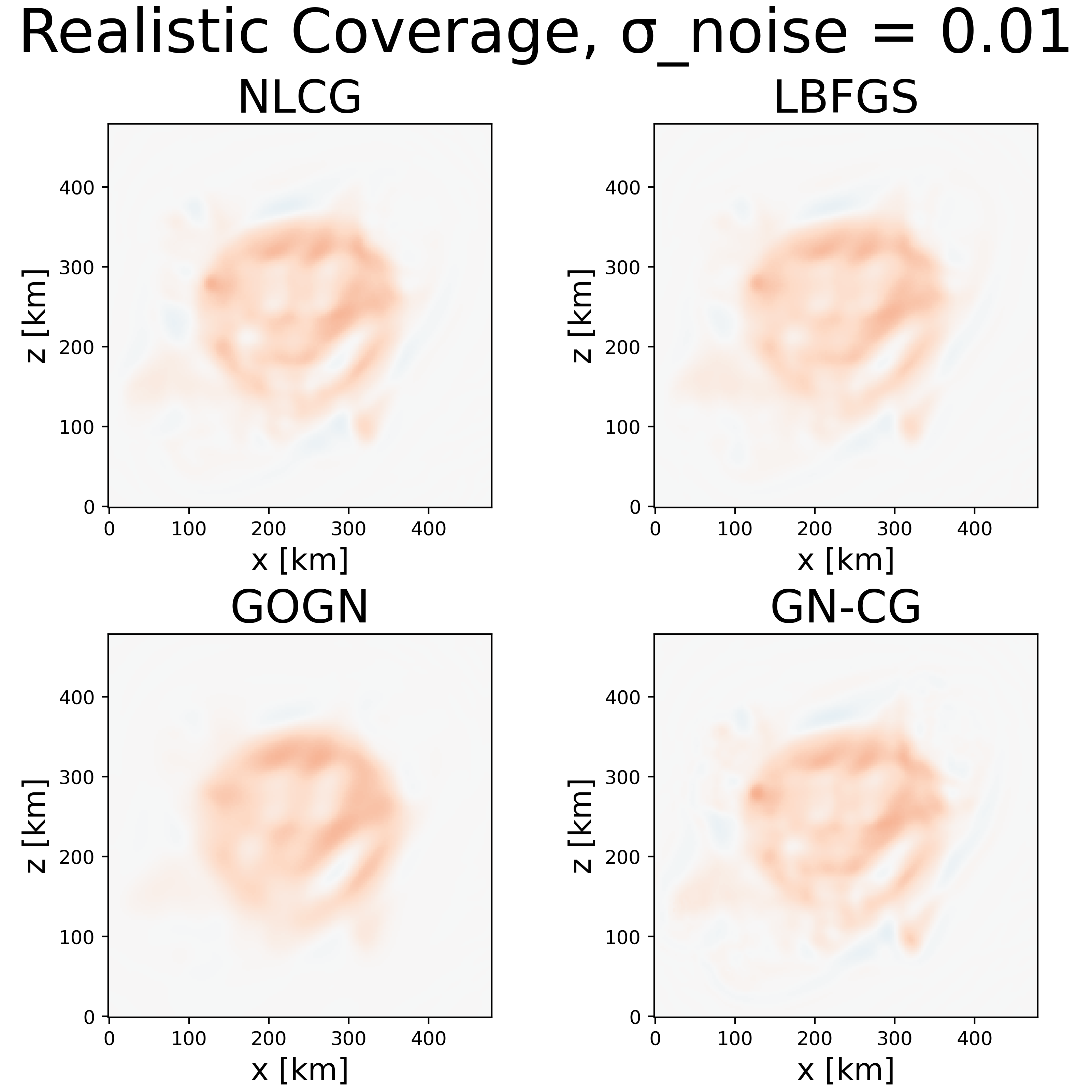}   
\includegraphics[width=.102\linewidth, trim=0cm 4cm 0cm 0cm, clip]{figures/cbar.png}    
    \caption{Final reconstructions from the experiments in Figure~\ref{fig:optstats_simple252} }
    \label{fig:rec_simple252}
\end{figure}

\begin{figure}[t]
    \centering
    Uniform Coverage, $\sigma = 1.0$ \newline 
    \includegraphics[width=0.3\linewidth]{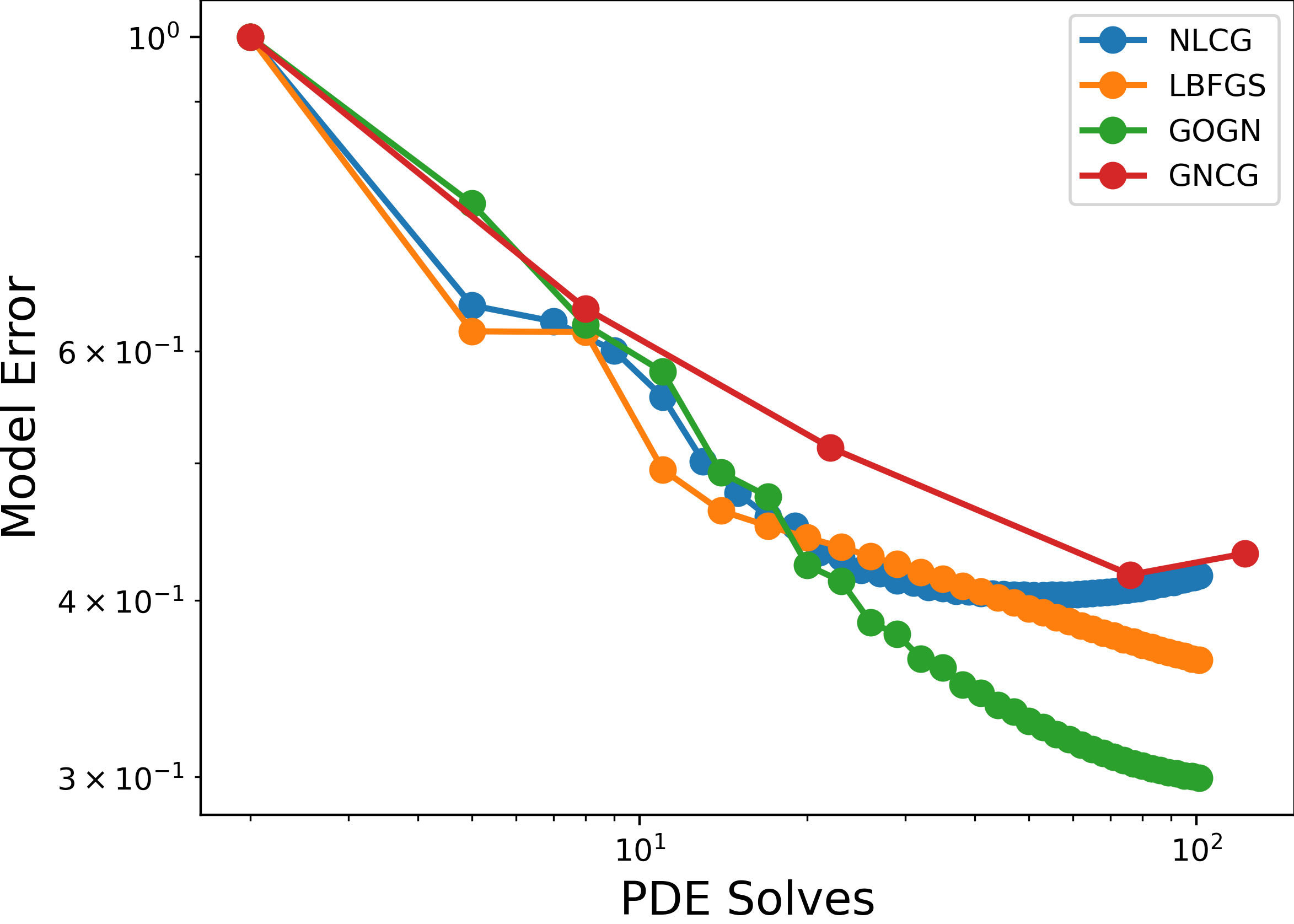}
    \includegraphics[width=0.3\linewidth]{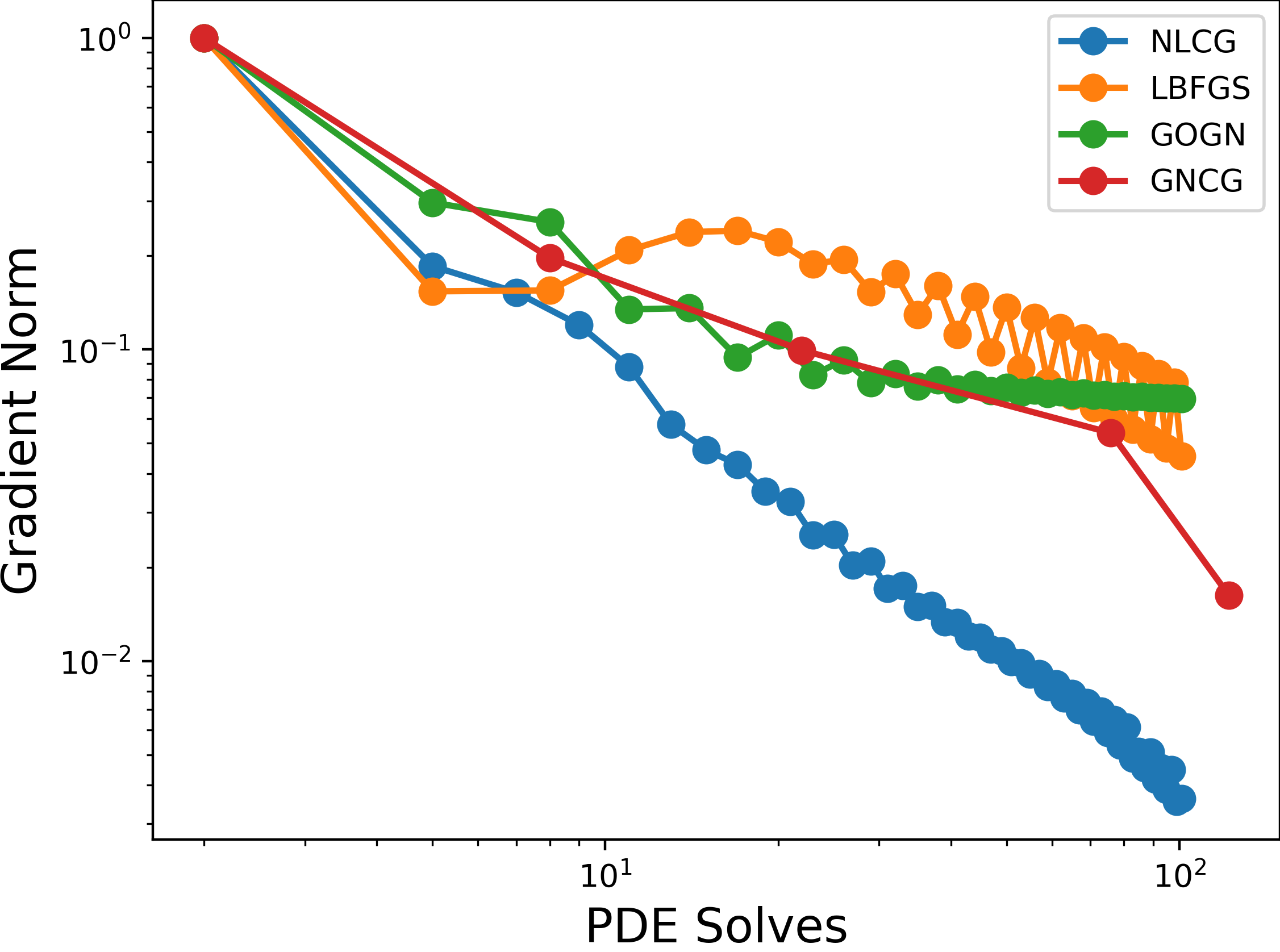}
    \includegraphics[width=0.3\linewidth]{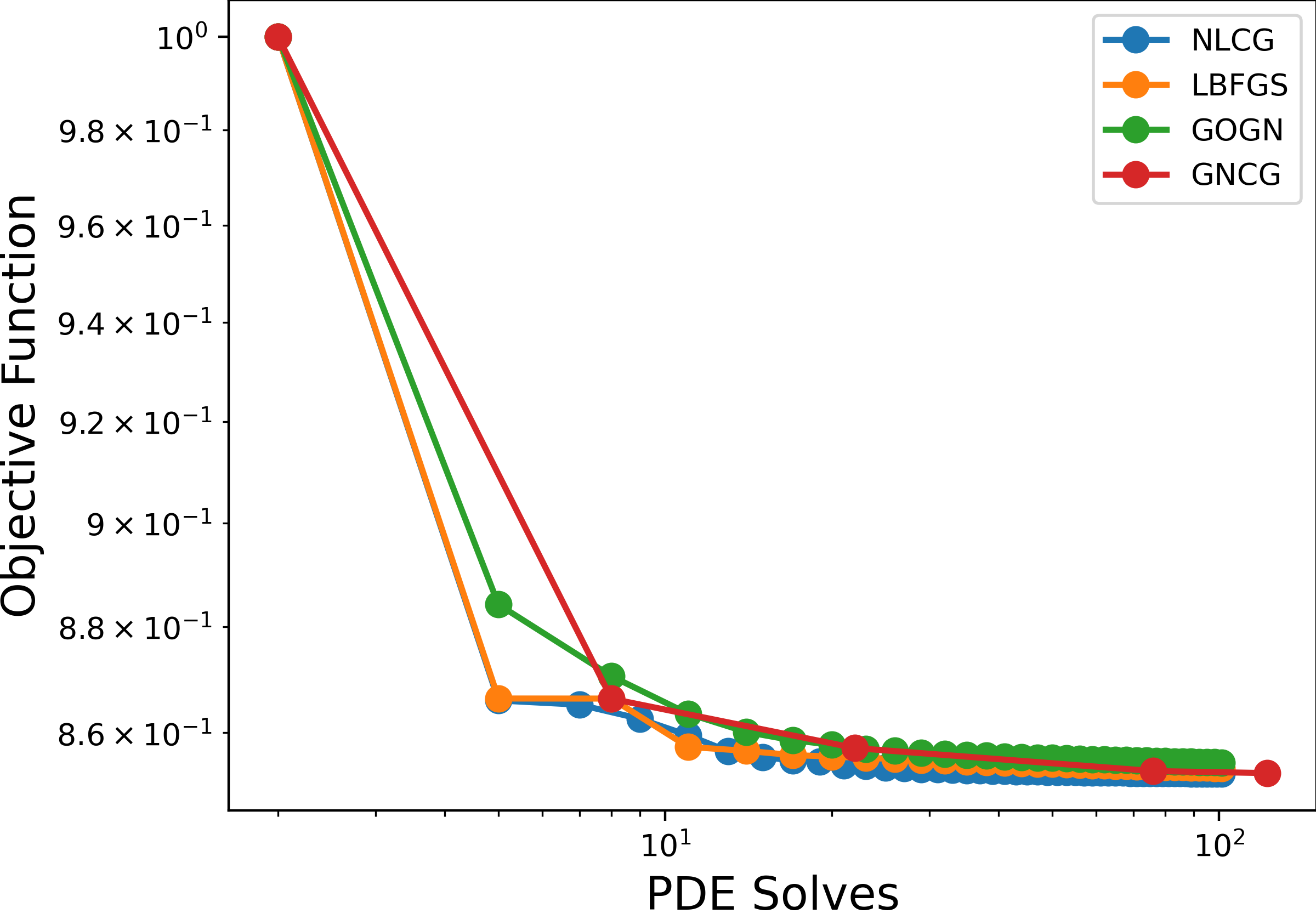}   
    \newline Realistic Coverage, $\sigma = 1.0$ \newline 
    \includegraphics[width=0.3\linewidth]{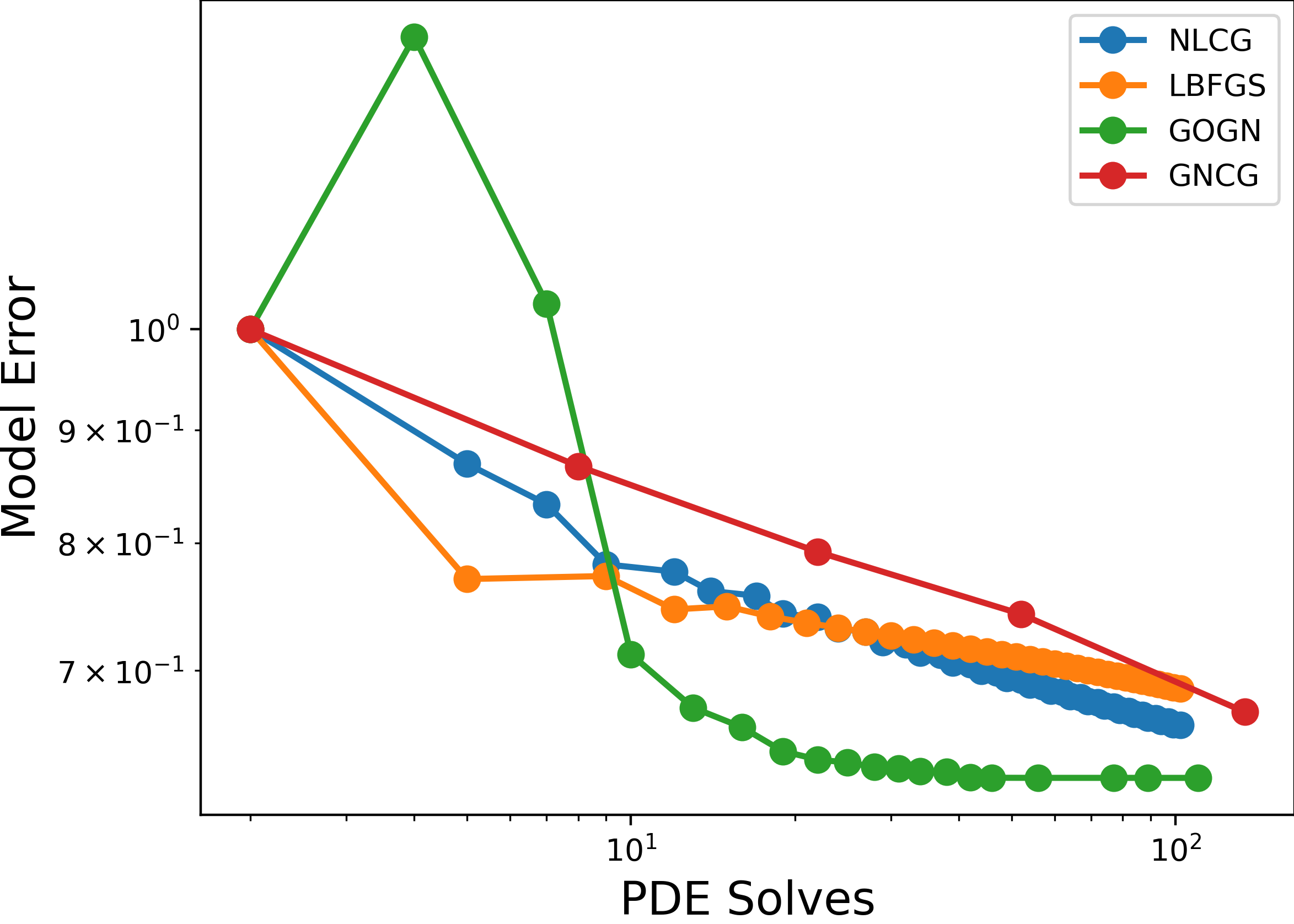}
    \includegraphics[width=0.3\linewidth]{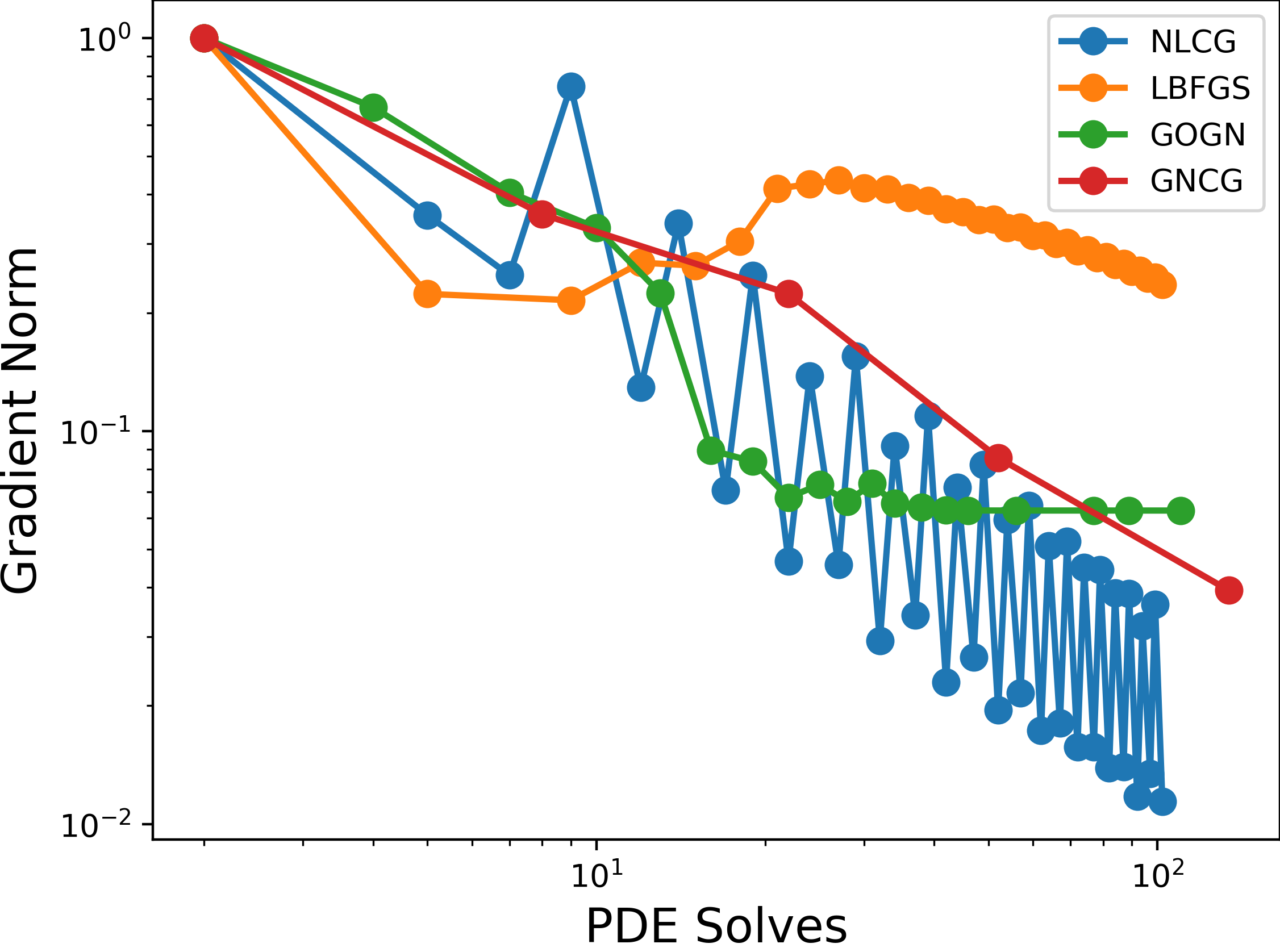}
    \includegraphics[width=0.3\linewidth]{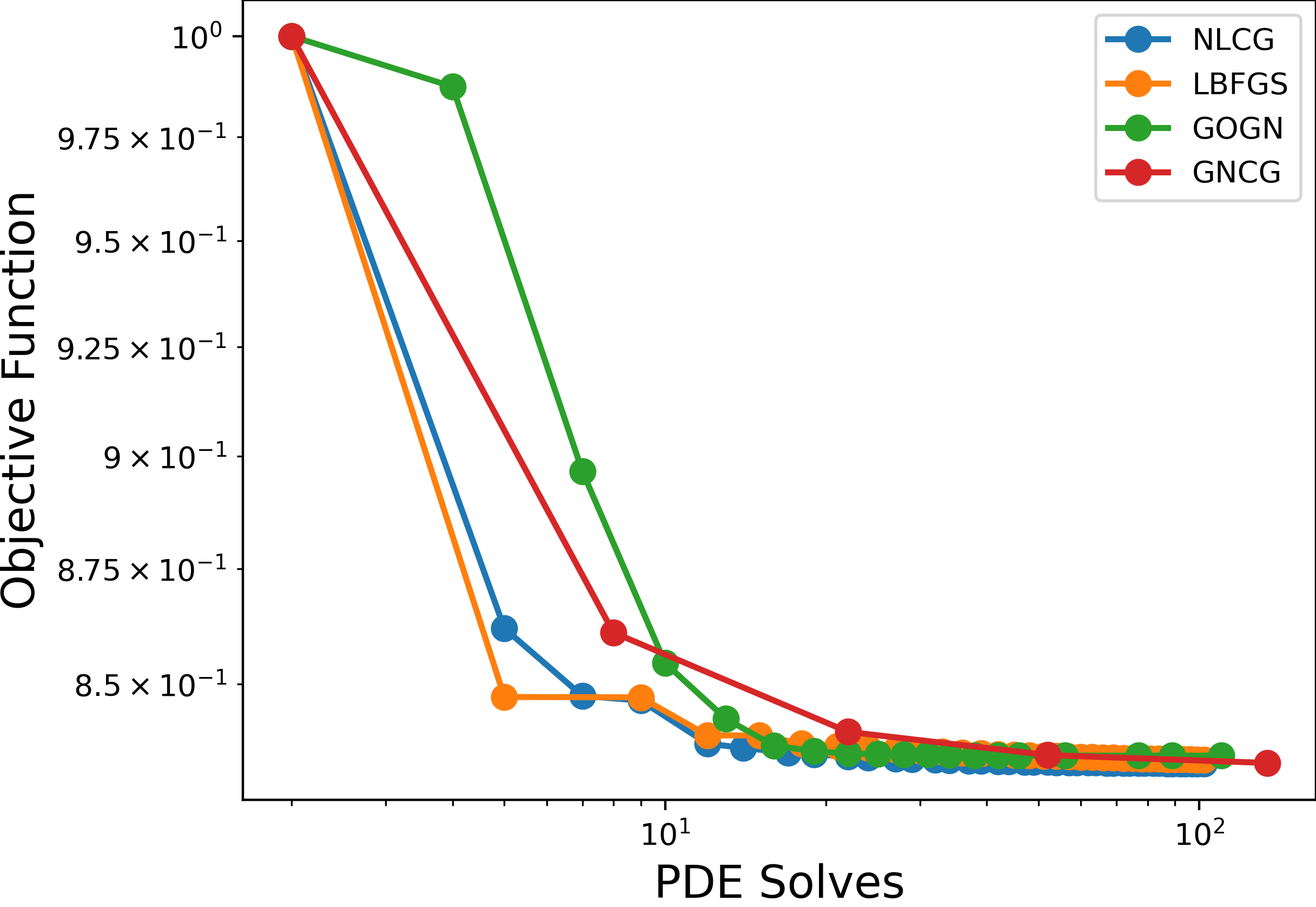}       
    \caption{Convergence plots for uniformly distributed receiver coverage (top row) and realistic coverage (bottom row). X-axes across all images denotes number of PDE solves during optimization. Y-axes denote model error (left), gradient norm (middle), and objective function values (right) for $8$ and $5$ sources, respectively, at a noise level of $\sigma = 1.0$}
    \label{fig:optstats_simple0}
\end{figure}

\begin{figure}[t]
    \centering
\includegraphics[width=0.4\linewidth]{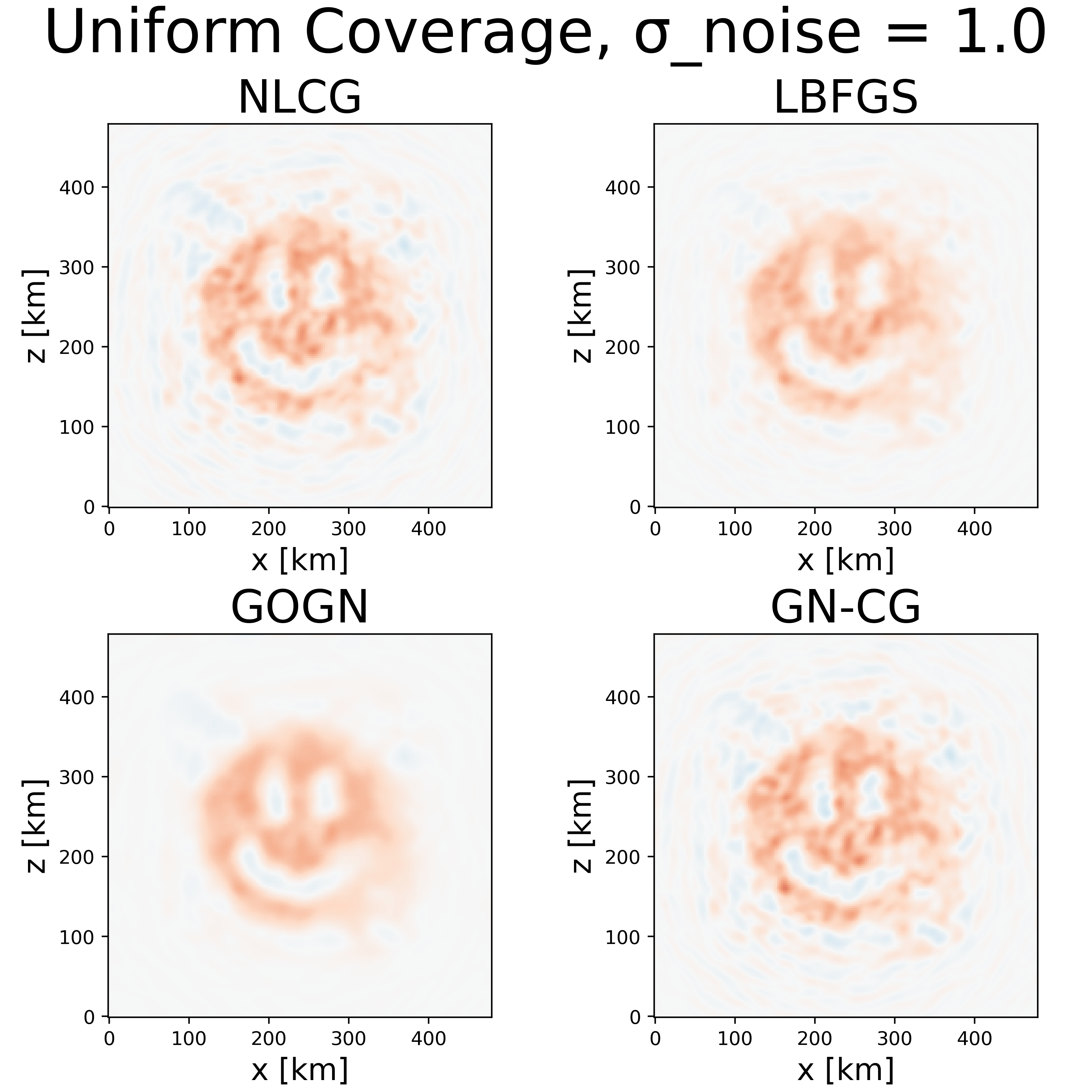}   
\includegraphics[width=0.4\linewidth]{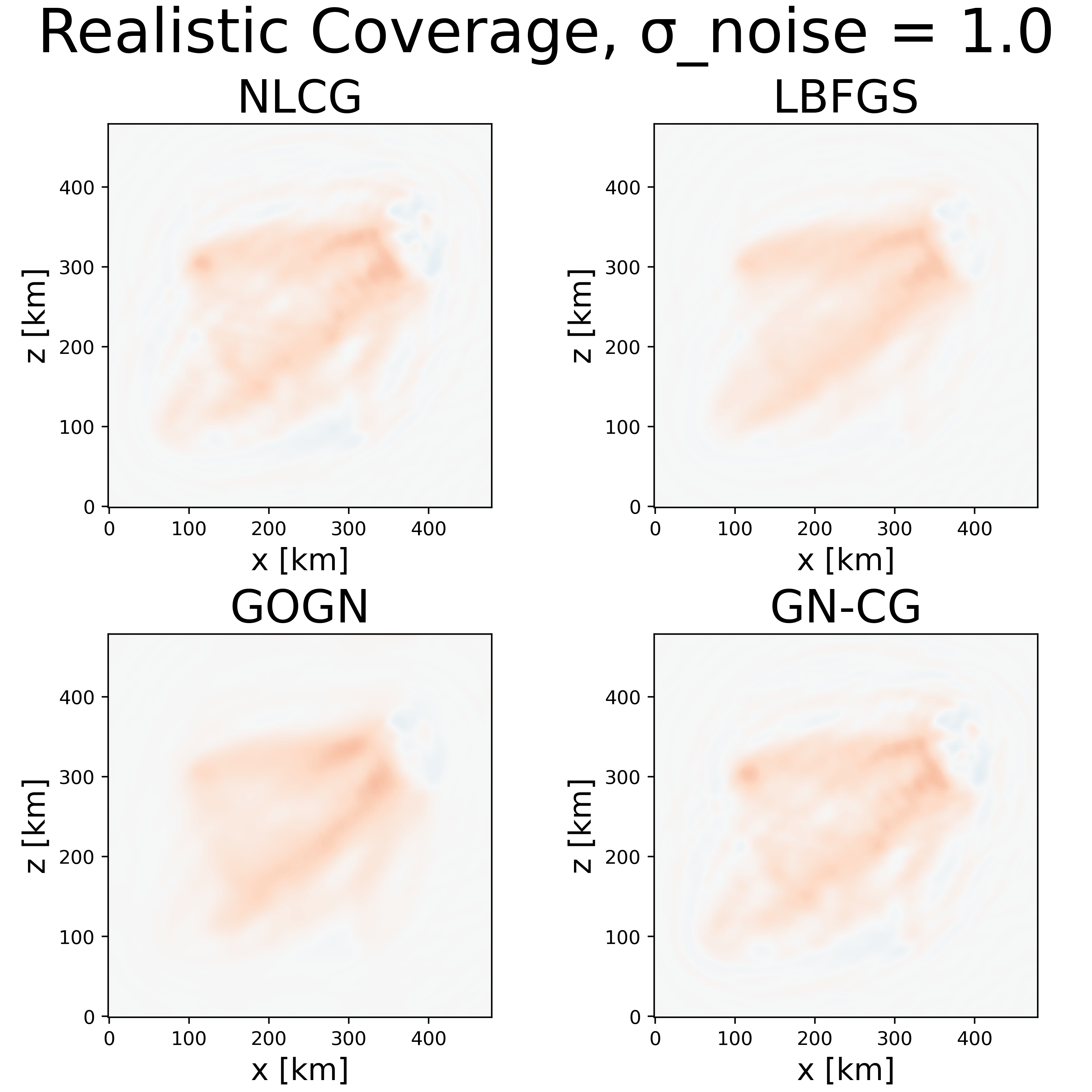}   
\includegraphics[width=.102\linewidth, trim=0cm 4cm 0cm 0cm, clip]{figures/cbar.png}    
    \caption{Final reconstructions from the experiments in Figure~\ref{fig:optstats_simple0} }
    \label{fig:rec_simple0}
\end{figure}

\begin{figure}[t]
    \centering
    Uniform Coverage, $\sigma = 1.0$ \newline 
    \includegraphics[width=0.3\linewidth]{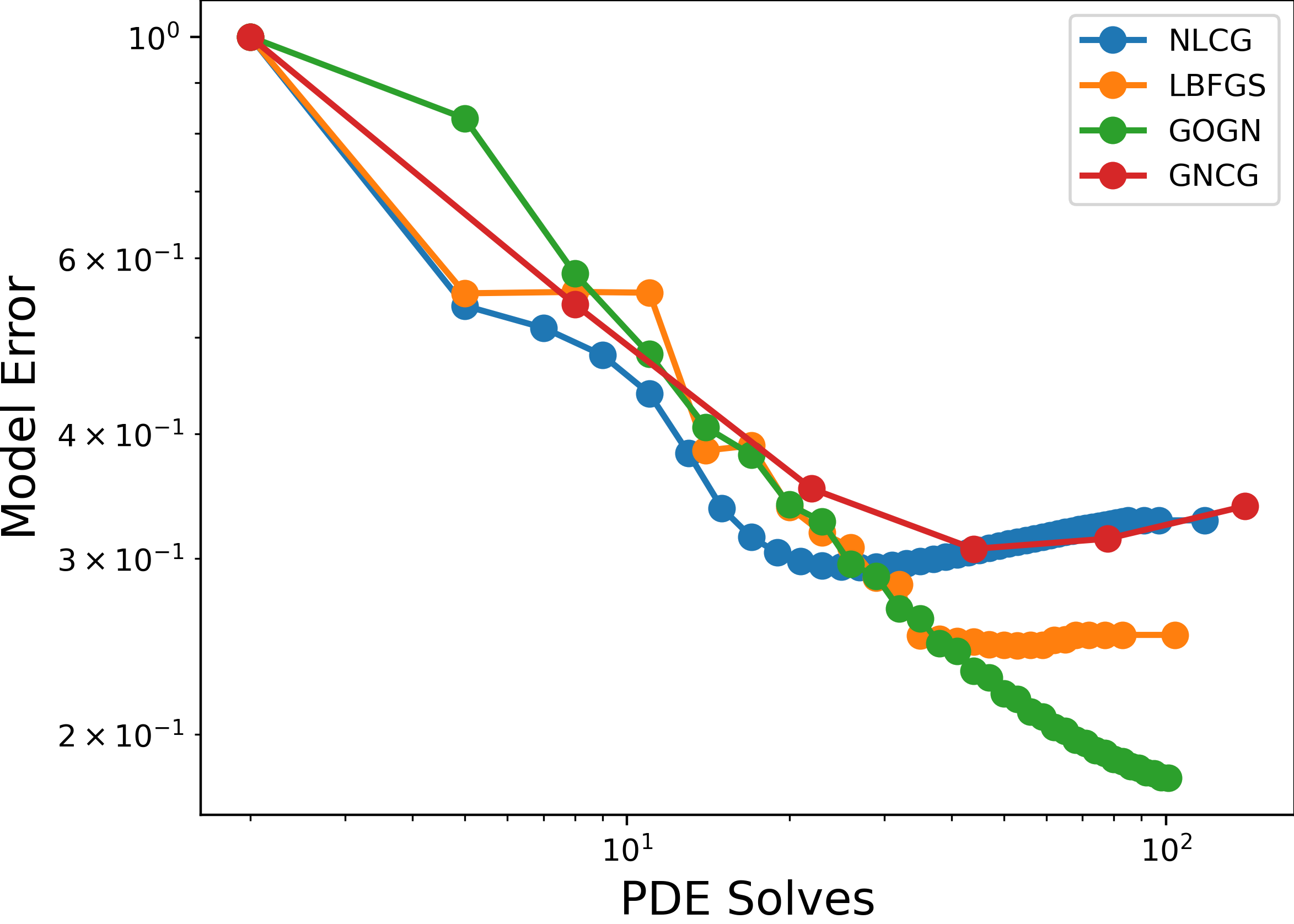}
    \includegraphics[width=0.3\linewidth]{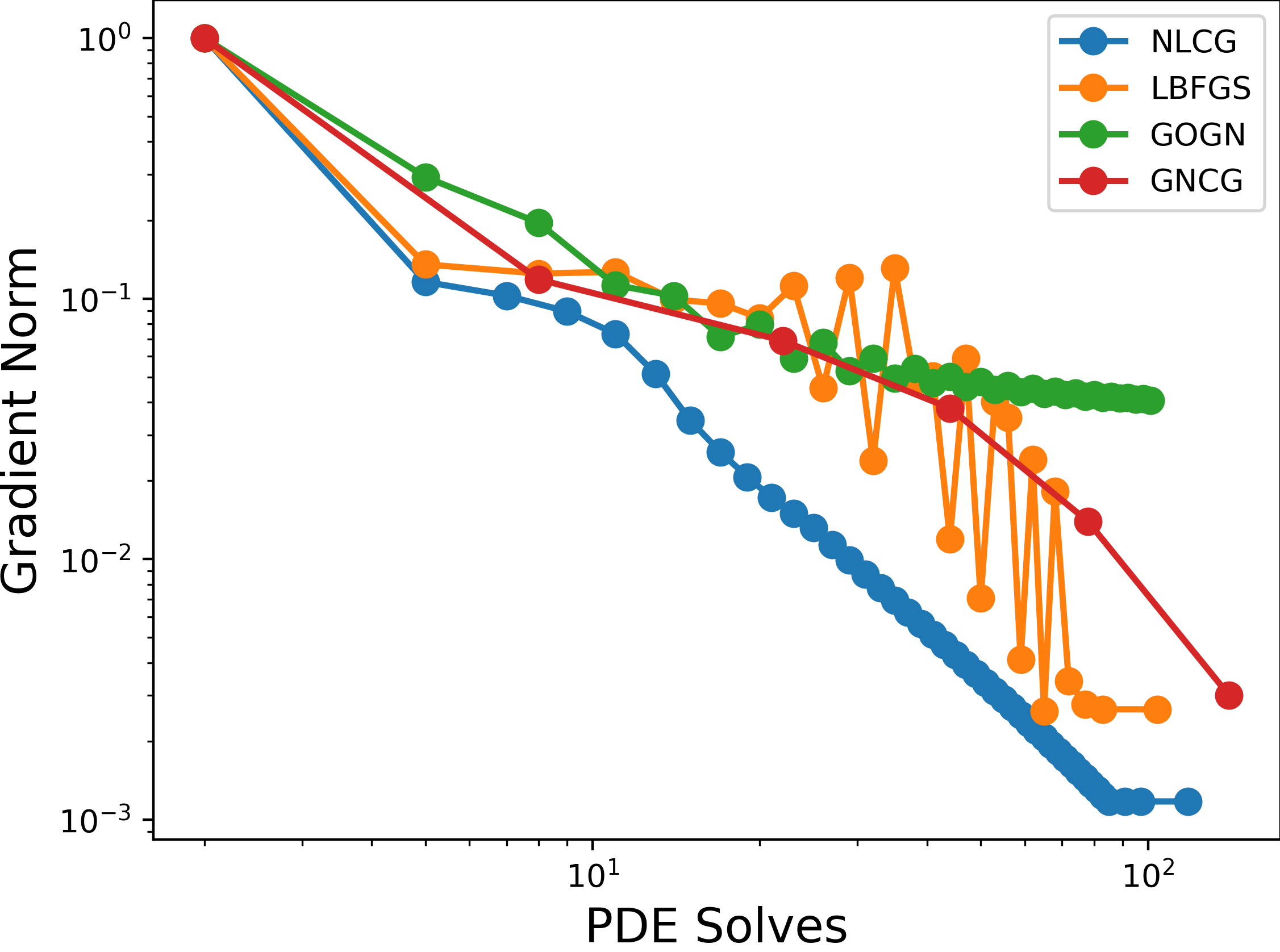}
    \includegraphics[width=0.3\linewidth]{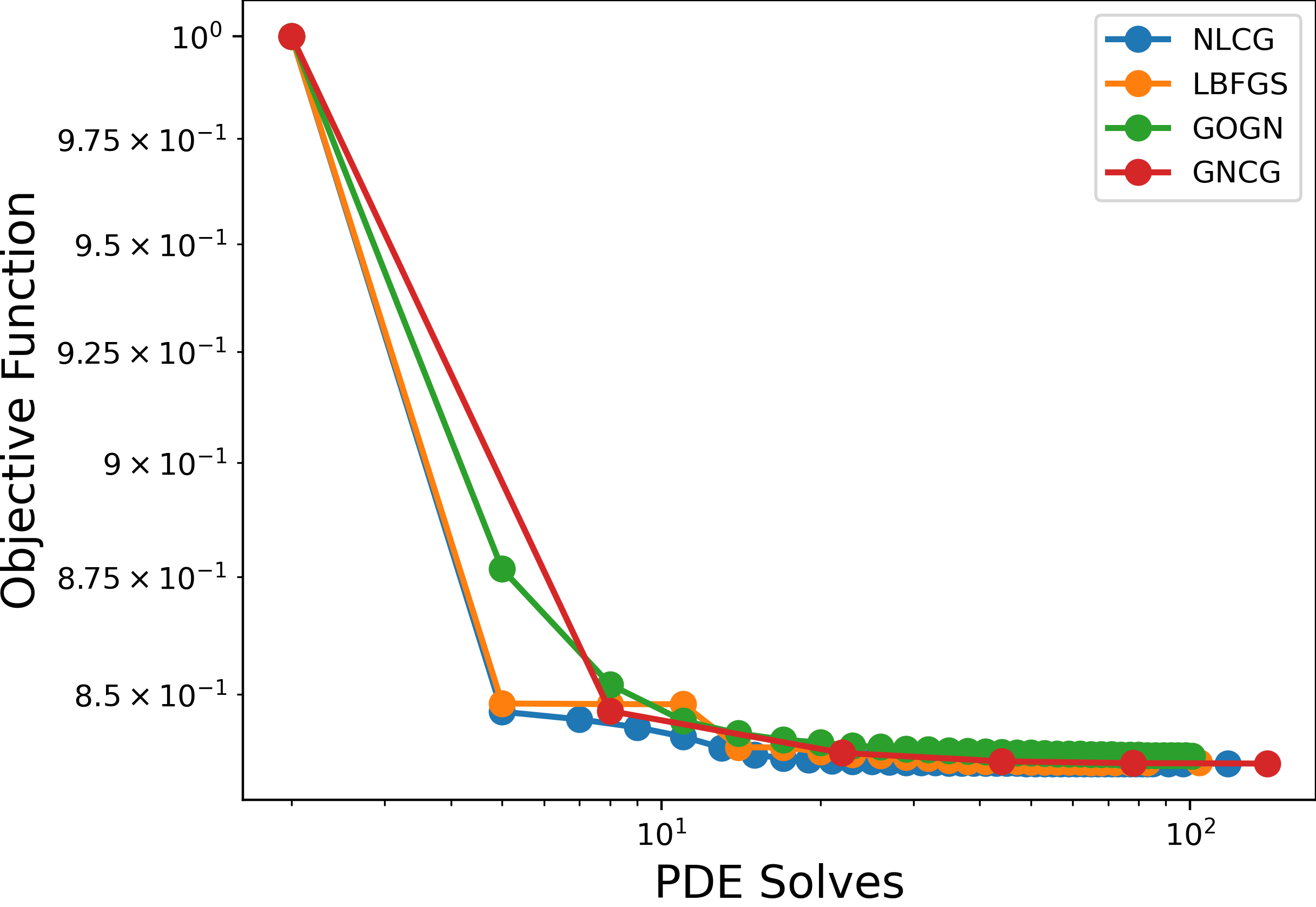}   
    \newline Realistic Coverage, $\sigma = 1.0$ \newline 
    \includegraphics[width=0.3\linewidth]{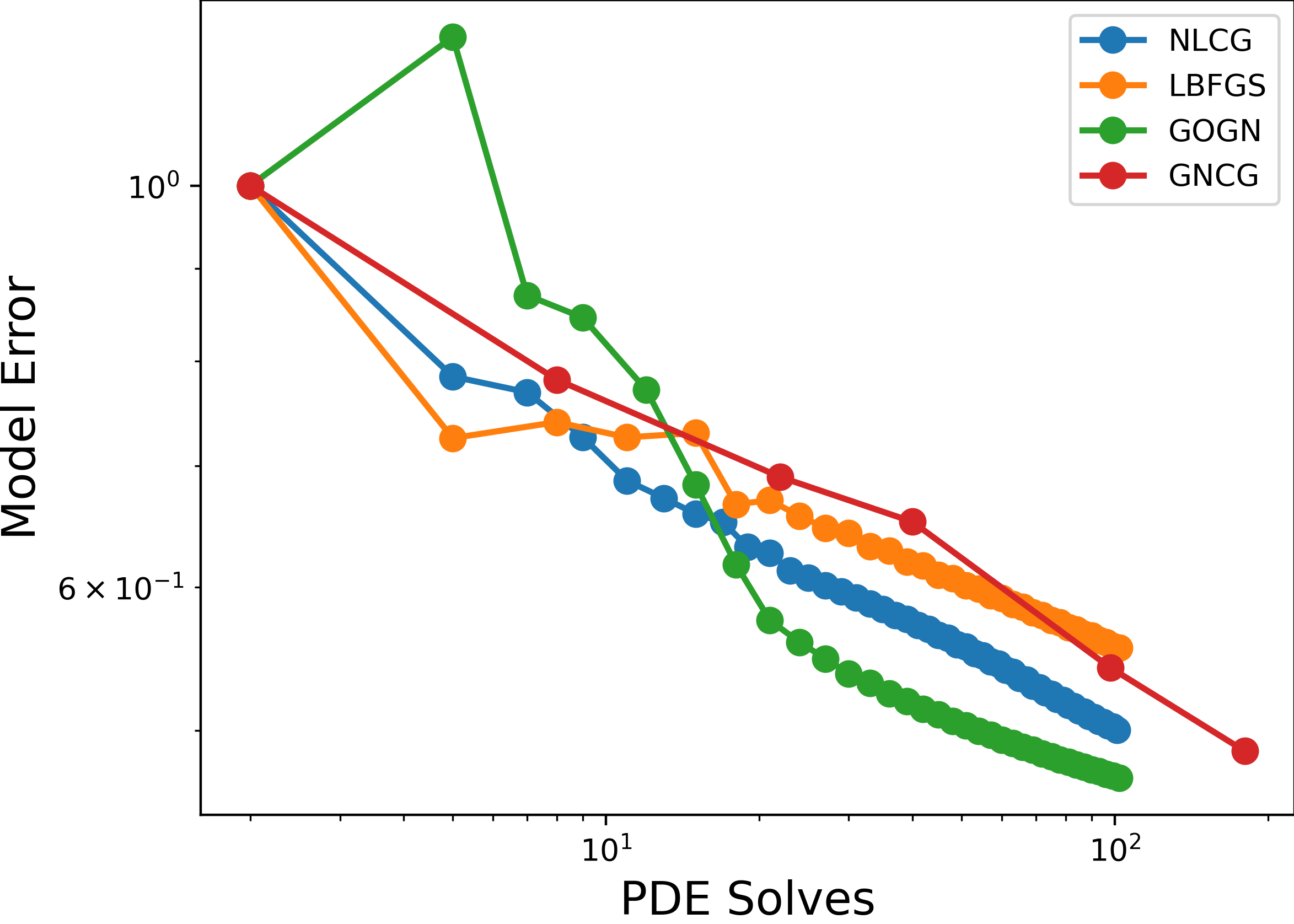}
    \includegraphics[width=0.3\linewidth]{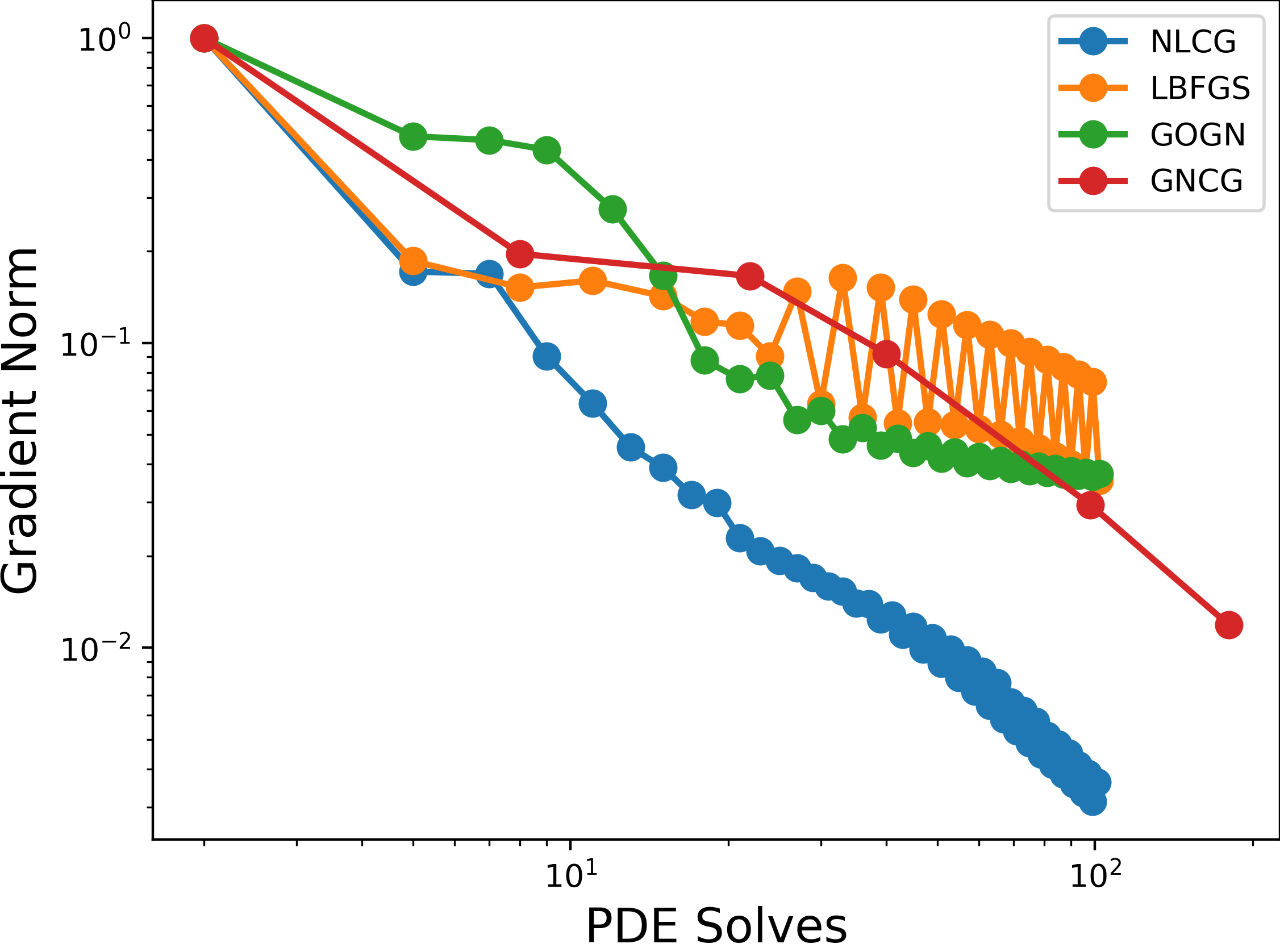}
    \includegraphics[width=0.3\linewidth]{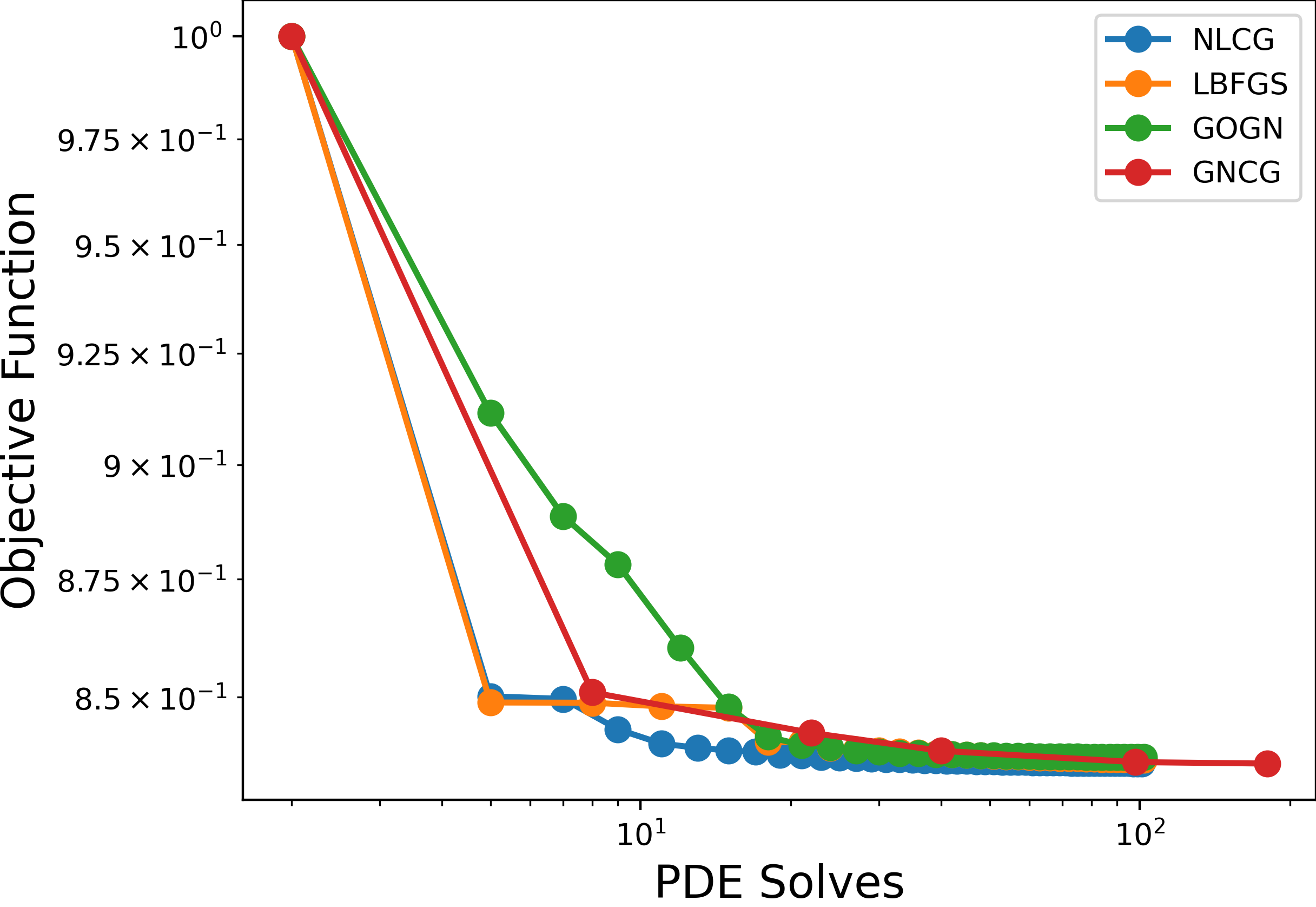}       
    \caption{Convergence plots for uniformly distributed receiver coverage (top row) and realistic coverage (bottom row), with X-axes across all images denoting number of PDE solves during optimization, and Y-axes denoting model error (left), gradient norm (middle), and objective function values (right) for $25$ sources at a noise level of $\sigma = 1.0$}
    \label{fig:optstats_simple250}
\end{figure}

\begin{figure}[t]
    \centering
\includegraphics[width=0.4\linewidth]{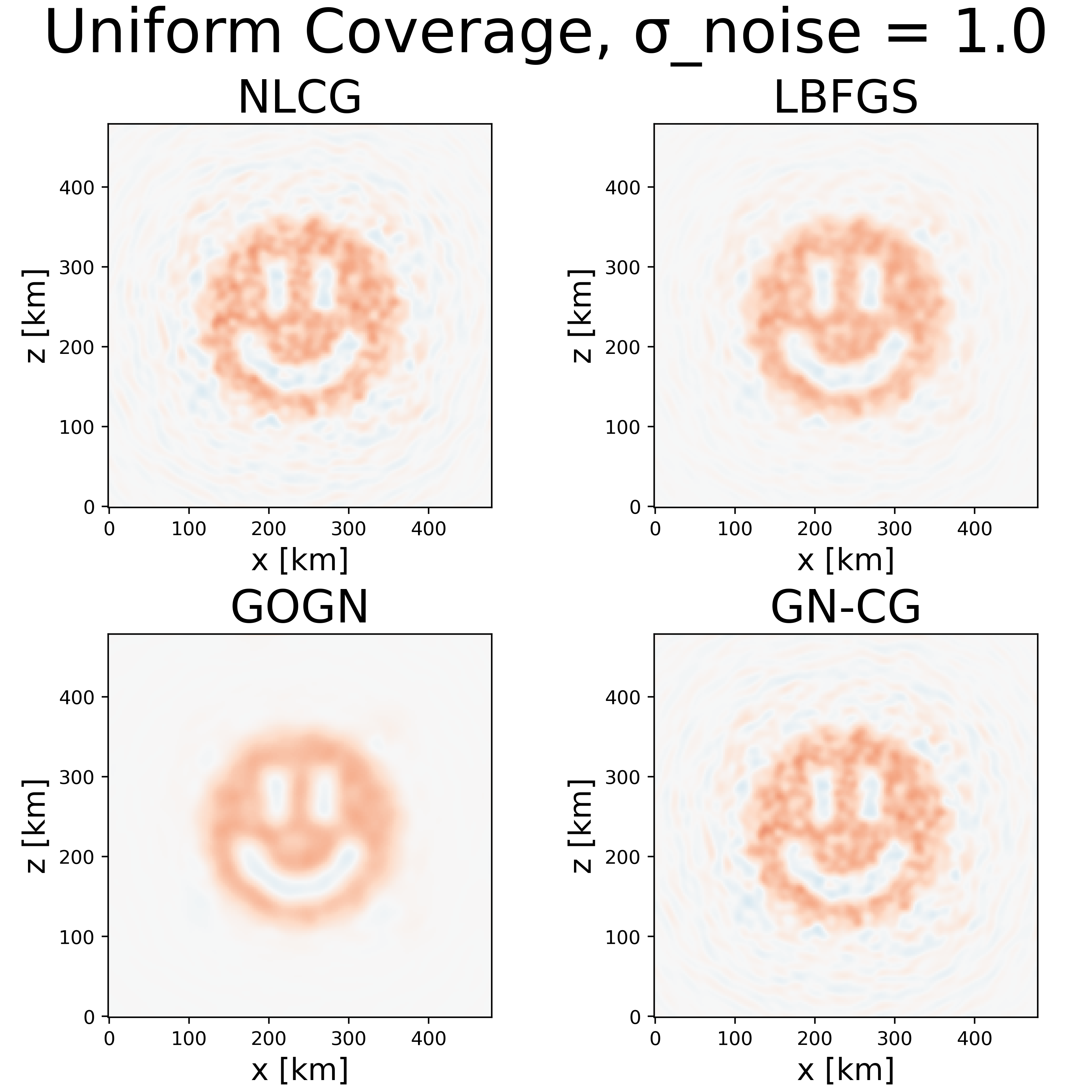}   
\includegraphics[width=0.4\linewidth]{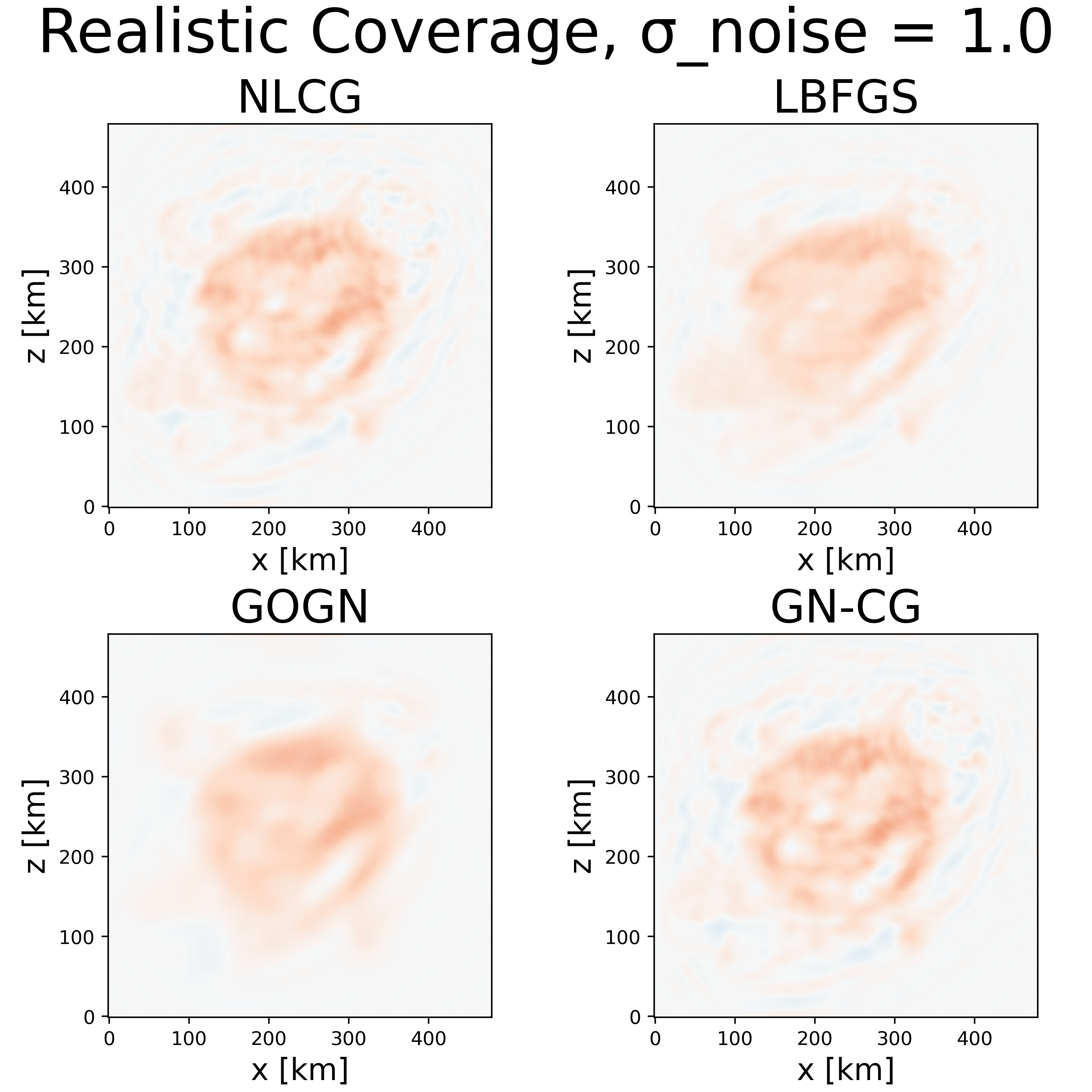}   
\includegraphics[width=.102\linewidth, trim=0cm 4cm 0cm 0cm, clip]{figures/cbar.png}    
    \caption{Final reconstructions from the experiments in Figure~\ref{fig:optstats_simple250} }
    \label{fig:rec_simple250}
\end{figure}

\end{document}